\newtheorem{thm}{Theorem}[section]
\newtheorem{lem}[thm]{Lemma}
\newtheorem{cor}[thm]{Corollary}
\newtheorem{prop}[thm]{Proposition}
\newtheorem{defi}{Definition}
\newtheorem{conj}{Conjecture}
\newtheorem{hyp}[thm]{Hypothesis}
\theoremstyle{definition}
\newtheorem*{remark}{Remark}
\newtheorem{exam}{Example}[section]
\renewcommand{\ni}{\noindent}
\renewcommand{\ker}{\operatorname{ker}}               
\newcommand{\im}{\operatorname{im}}
\newcommand{\ra}{\rightarrow}
\newcommand{\mZ}{\mathbb{Z}}
\newcommand{\mF}{\mathbb{F}}
\newcommand{\mC}{\mathbb{C}}
\newcommand{\mR}{\mathbb{R}}
\newcommand{\mN}{\mathbb{N}}
\newcommand{\Ann}{\operatorname{Ann}}
\newcommand{\Stab}{\operatorname{Stab}}
\renewcommand{\rm}{\mathrm}
\newcommand{\mbf}{\mathbf}
\newcommand{\Cal}{\mathcal}
\newcommand{\End}{\operatorname{End}}
\newcommand{\thra}{\twoheadrightarrow}
\newcommand{\sm}{\setminus}
\newcommand{\lda}{\lambda}
\newcommand{\Irr}{\operatorname{Irr}}
\renewcommand{\a}{\alpha}
\renewcommand{\b}{\beta}
\renewcommand{\k}{\kappa}
\renewcommand{\d}{\delta}
\renewcommand{\l}{\lambda}
\newcommand{\om}{\omega}
\renewcommand{\th}{\theta}
\newcommand{\lan}{\langle}
\newcommand{\ran}{\rangle}
\newcommand{\Top}{\operatorname{Top}}
\newcommand{\Supp}{\operatorname{Supp}}
\newcommand{\Infl}{\operatorname{Infl}}
\newcommand{\Defl}{\operatorname{Defl}}
\newcommand{\trace}{\operatorname{trace}}
\title{Reduction for characters of finite algebra groups}
\author{Anton Evseev\thanks{{\em E-mail address:} A.Evseev@qmul.ac.uk} \\
\bigskip
{\em Queen Mary, University of London}}
\date{}
\begin{document}

\maketitle

\abstract{Let $J$ be a finite-dimensional nilpotent algebra over a finite field $\mF_q$.
We formulate a procedure for analysing characters of the group $1+J$. In particular, we study characters of the group $U_n (q)$ of unipotent triangular 
$n\times n$ matrices over $\mF_q$. 
Using our procedure, we compute the number of irreducible characters of $U_n (q)$ of each degree for $n\le 13$. Also, we explain and generalise a phenomenon concerning
the group $U_{13}(2)$ discovered by Isaacs and Karagueuzian.

\medskip
\ni
\emph{Keywords:} algebra groups; the unitriangular group; irreducible characters.}

%\begin{keyword} algebra groups; the unitriangular group; irreducible characters. \end{keyword}

\section{Introduction}\label{parintro}

In this paper we analyse complex characters of a special kind of $p$-groups.
Let $J$ be a finite-dimensional nilpotent algebra over a finite field $\mF_q$, where $q$ is a prime power. The group 
$$1+J=\{1+x \! : \; x\in J\},$$ 
with the multiplication law
$$
(1+x)(1+y) = 1+x + y + xy,
$$
is called an $\mF_q$-\emph{algebra group} (see~\cite{Isaacs1995}).
An important example is the group $U_n (q)$ of unipotent upper-triangular matrices over $\mF_q$. Clearly, $U_n (q)=1+T_n (q)$ where $T_n (q)$ is the algebra of nilpotent upper-triangular matrices over $\mF_q$, so $U_n (q)$ is indeed an algebra group. Understanding conjugacy classes and complex irreducible characters of $U_n (q)$ has proved to be a hard problem. In particular, it is a long-standing conjecture of Higman~\cite{Higman1} that the number of conjugacy classes of $U_n (q)$ is, for fixed $n$, polynomial in $q$ with integer coefficients. 

\begin{thm}[Isaacs \cite{Isaacs1995}]\label{powerq} Let $1+J$ be an $\mF_q$-algebra group. Then the degree of each complex irreducible character of $1+J$ is a power of $q$.
\end{thm}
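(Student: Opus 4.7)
The plan is to prove by induction on $n := \dim_{\mF_q} J$ the stronger assertion that every $\chi \in \Irr(1+J)$ has the form $\mathrm{Ind}_{1+K}^{1+J}\,\eta$ for some subalgebra $K$ of $J$ and some linear character $\eta$ of $1+K$. Since $[1+J : 1+K] = q^{n - \dim K}$, this yields the theorem. The base case $n = 0$ is immediate.

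For the inductive step, I would carry out a Clifford reduction with respect to the two-sided annihilator $Z := \{z \in J : zJ = Jz = 0\}$. Nilpotency of $J$ forces $Z \ne 0$, and $Z$ is a central ideal, so $1+Z$ is a central subgroup of $G := 1+J$ whose underlying abelian group is $Z^+$. Fix a nontrivial additive character $\psi : \mF_q^+ \to \mC^\times$; Schur's lemma yields $\chi|_{1+Z} = \chi(1) \cdot (\psi \circ \mu)$ for a unique $\mu \in Z^*$. If $\mu = 0$, then $1+Z \subseteq \ker\chi$ and $\chi$ descends to an irreducible character of $1+J/Z$, to which the inductive hypothesis applies. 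If $\mu \ne 0$ and $\dim Z \ge 2$, then $Y := \ker\mu \subseteq Z$ is an $\mF_q$-hyperplane; since $Y \subseteq Z$ gives $JY \subseteq JZ = 0 \subseteq Y$ and likewise $YJ \subseteq Y$, the subspace $Y$ is an ideal of $J$, and $1+Y \subseteq \ker\chi$, so $\chi$ descends to $1+J/Y$ and induction again applies.

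The remaining case is $\dim Z = 1$ with $\mu \ne 0$, so that $\lambda := \psi \circ \mu$ is a faithful character of $1+Z \cong \mF_q^+$. Here I would construct a polarization. Extend $\mu$ to an $\mF_q$-linear $\widetilde\mu : J \to \mF_q$, consider the alternating bilinear form $B(x, y) := \widetilde\mu(xy - yx)$ on $J$, and let $R$ be its radical. Then $R \supseteq Z$, $R$ is a subalgebra (a direct check using associativity), and $\dim J - \dim R$ is even, since $B$ descends to a nondegenerate alternating form on $J/R$. I would then produce a subalgebra $P$ with $R \subseteq P \subseteq J$, $\dim P - \dim R = \tfrac12(\dim J - \dim R)$, satisfying the \emph{associative isotropy} condition $\widetilde\mu(P \cdot P) = 0$; this condition ensures that $\eta(1+p) := \psi(\widetilde\mu(p))$ is a linear character of $1+P$. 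A Mackey / Frobenius-reciprocity argument then identifies $\mathrm{Ind}_{1+P}^G\,\eta$ with $\chi$, using that both characters lie above $\lambda$ and have matching degrees.

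The principal obstacle is the construction of a subalgebra $P$ with $\widetilde\mu(P \cdot P) = 0$, which is strictly stronger than the Lie polarization condition $\widetilde\mu([P, P]) = 0$ familiar from Kirillov's orbit method for nilpotent Lie algebras in characteristic zero. Producing such an associative polarization requires a separate inductive construction that exploits associativity directly, for instance via the left and right annihilators of $\widetilde\mu$ modulo $R$; the subsequent identification of $\mathrm{Ind}_{1+P}^G\,\eta$ with $\chi$ is, by comparison, a routine orbit-theoretic calculation.
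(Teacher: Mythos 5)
The paper does not actually prove Theorem~\ref{powerq}; it is quoted from Isaacs~\cite{Isaacs1995}, so your argument has to stand on its own. The preliminary reductions via the two-sided annihilator $Z$ are fine, but the plan for the remaining case ($\dim Z=1$ with faithful central character) cannot be completed, and the obstacle you yourself flag is not merely technical: it is impossible in general. A linear character of $1+P$ of the form $\eta(1+p)=\psi(\widetilde\mu(p))$ with $\widetilde\mu(P\cdot P)=0$ is exactly a linear character satisfying $1+P^2\subseteq\ker\eta$, so if $\eta^{1+J}=\chi$ then $\chi$ is \emph{well-induced} in the sense of Definition~\ref{widef}. Your induction, if it could be carried out, would therefore show that every irreducible character of every algebra group is well-induced: in your first two cases well-inducedness of the deflated character pulls back to $\chi$ (this is the Type~A case of Proposition~\ref{wipres}), and in the third case the pair $(P,\eta)$ is itself a witness. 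But Theorems~\ref{wi} and~\ref{main} of this very paper exhibit $q(q-1)^{13}$ irreducible characters of $U_{13}(q)$, all of degree $q^{16}$, that are not well-induced; for these characters no subalgebra $P$ and functional $\widetilde\mu$ with $\widetilde\mu(P\cdot P)=0$ and $\eta^{1+J}=\chi$ exist at all, whether or not $P$ contains the radical $R$ of your form or is maximal isotropic. So the ``associative polarization'' whose construction you defer does not exist in general, and the proposed route cannot prove the theorem.

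There is also a gap in the final identification even when such a $P$ happens to exist: two distinct irreducible characters can lie over the same faithful $\lambda$ and have equal degrees, so ``same central character and matching degree'' does not force $\eta^{1+J}=\chi$; moreover you fixed $\widetilde\mu$ as an arbitrary linear extension of $\mu$ before ever looking at $\chi$, whereas any orbit-style argument must let the extension depend on $\chi$. The salvageable version of your stronger claim is Halasi's theorem~\cite{Halasi2006}, quoted in the Remark after Definition~\ref{widef}: every $\chi\in\Irr(1+J)$ is induced from a linear character of some algebra subgroup $1+K$, with no requirement that $1+K^2$ lie in the kernel. That statement does imply Theorem~\ref{powerq}, since $|1+J:1+K|$ is a power of $q$, but its proof is a genuinely different induction, not a polarization construction; likewise Isaacs's original proof is a Clifford-theoretic induction on $\dim J$ using algebra subgroups and does not produce linear characters at all. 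Either of those is the route to follow; the condition $\widetilde\mu(P\cdot P)=0$ is strictly stronger than anything that is available in general.
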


Let $N_{n,e}(q)$ be the number of irreducible characters of  $U_n (q)$ of degree $q^e$. Theorem~\ref{powerq} shows that every irreducible character of $U_n (q)$ is counted 
in $N_{n,e}(q)$ for some $e$. The conjecture of Higman has been refined by Lehrer and Isaacs as follows.

\begin{conj}[\cite{Isaacs2007}]\label{conj_poly} For fixed $n$ and $e$, $N_{n,e}(q)$ can be expressed as a polynomial in $q$ with integer coefficients.
\end{conj}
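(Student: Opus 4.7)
\medskip
\noindent\textbf{Proof proposal.} The strategy is to develop a recursive reduction procedure on $\dim J$ that interacts well with Clifford theory. Given $1+J$, choose a one-dimensional ideal $\mF_q z$ contained in the socle of $J$, so that $1+\mF_q z$ is a central subgroup. Each $\chi\in\Irr(1+J)$ restricts to a multiple of a linear character $\lambda$ of $1+\mF_q z$. When $\lambda$ is trivial, $\chi$ descends to an irreducible character of the smaller algebra group $1+(J/\mF_q z)$. When $\lambda$ is nontrivial, the central character forces it to be fixed by all of $1+J$, and a Stone--von Neumann/Heisenberg-type argument produces a canonical bijection between the set of $\chi\in\Irr(1+J)$ lying above $\lambda$ and the set of irreducibles of a strictly smaller ``relative'' algebra-like group, with each character degree multiplied by an explicit power of $q$ arising from the size of the associated Heisenberg radical. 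Iterating gives a recursive description of $\Irr(1+J)$ together with explicit bookkeeping of how the degrees (powers of $q$ by Theorem~\ref{powerq}) propagate through each step.

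To derive Conjecture~\ref{conj_poly}, one applies this procedure to $U_n(q)=1+T_n(q)$ with the natural choice $z=E_{1n}$. The nontrivial characters of the central subgroup $1+\mF_q E_{1n}$ are permuted transitively by the scalars, and the Clifford reduction attached to such a $\lambda$ produces, after quotienting by the kernel of $\lambda$ inside the two-sided ideal generated by $E_{1n}$, a group closely related to $U_{n-2}(q)$ times an affine factor whose dimension is polynomial in $n$. Tracking contributions of each branch of the recursion to $N_{n,e}(q)$ expresses it in terms of analogous invariants for smaller members of the class, and a straightforward induction on $\dim J$ yields polynomiality in $q$ with integer coefficients, provided the class of groups appearing at each stage is closed under the reduction and every intermediate count is polynomial in $q$.

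The principal obstacle is precisely that closure statement: stabilisers produced by Clifford theory are not in general algebra groups in the sense of~\cite{Isaacs1995}, so one must work within an enlarged class (for example, algebra groups endowed with a distinguished linear functional on a central ideal, or ``relative'' algebra groups associated to pairs of nested ideals) and show that this enlarged class is stable under the reduction. A further, deeper difficulty is uniformity in $q$: the number of $(1+J)$-orbits on linear characters of a central section could a priori vary with $q$ nonpolynomially, which is essentially the same kind of obstruction that blocks a proof of Higman's original conjecture. For this reason I expect a fully uniform proof to be out of reach by these methods alone. A realistic near-term deliverable is an algorithmic form of the reduction which, for each fixed $n$, terminates in finitely many steps and outputs an explicit polynomial expression for $N_{n,e}(q)$; running it for $n\le 13$ would simultaneously verify the conjecture in that range and expose structural patterns that a future uniform proof will have to explain, such as the Isaacs--Karagueuzian phenomenon referenced in the abstract.
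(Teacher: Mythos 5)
Your assessment is accurate: the statement is a conjecture that the paper does not prove in general either, and your proposed strategy --- a Clifford/Heisenberg-type reduction relative to a central one-dimensional ideal, run as a terminating algorithm for each fixed $n$ and used to verify polynomiality for $n\le 13$ --- is essentially the paper's method (Type A contractions when the central character is trivial, Type B contractions via good pairs $(Z,Y)$ as in Lemma~\ref{adjustment}, organised algorithmically in Sections~\ref{alg}--\ref{output}). The two obstacles you flag are handled there by working under Hypothesis~\ref{hyp31}, where the Clifford stabiliser is itself an algebra group $1+\Ann_L(u)$, and by encoding whole families of algebras uniformly in $q$ via parametrised ``algebraic data'' with character counts of the form $(q-1)^k q^l$, the algorithm giving up and reporting exceptional families exactly when it cannot certify such uniform counts.
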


If $q$ is a power of $2$, one can distinguish three (disjoint) types of characters of $\mF_q$-algebra groups: characters afforded by representations realisable over $\mR$ (or \emph{characters of real type}, for short), characters that are real-valued but are not of real type, and characters that are not real-valued. Isaacs and Karagueuzian~\cite{IK2005} studied characters of the groups $U_n (2)$ and discovered the following interesting facts.

\begin{thm}[Isaacs--Karagueuzian \cite{IK2005}] If $n\le 12$, every representation of $U_n (2)$ is realisable over $\mR$. However, if $n\ge 13$, there exists an irreducible character of $U_n (2)$ which is not real-valued.
\end{thm}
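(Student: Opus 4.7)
The natural tool is the Frobenius--Schur indicator $\nu(\chi) = \frac{1}{|G|}\sum_{g\in G}\chi(g^2)$, which equals $+1$ when $\chi$ is afforded by a real representation, $-1$ when $\chi$ is real-valued but not of real type, and $0$ when $\chi$ is not real-valued. Since $q=2$, the squaring map on $U_n(2)$ is not a bijection, so $\nu(\chi)$ is genuinely a three-valued invariant, and the theorem reduces to showing that $\nu\equiv 1$ on $\Irr(U_n(2))$ for $n\le 12$ and that $\nu$ vanishes on some character for $n\ge 13$.

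For the first part, my plan is to apply the reduction procedure formulated earlier in the paper, which expresses each irreducible of an algebra group in terms of characters of smaller algebra groups, typically by induction from an algebra subgroup. This lets one build up $\Irr(U_n(2))$ inductively while tracking $\nu$ through each reduction step; verifying $\nu(\chi)=1$ for all $\chi$ then becomes a finite but substantial computer calculation. The main practical obstacle is the combinatorial size of $U_{12}(2)$, of order $2^{66}$, which demands that the reduction be sharp enough to avoid enumerating every character individually, and that the Frobenius--Schur bookkeeping be compatible with the inductive step.

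For the second part, I would invoke the standard fact that the number of non-real-valued irreducibles of a finite group equals the number of non-ambivalent conjugacy classes, so it suffices to exhibit an element $g\in U_{13}(2)$ not conjugate to $g^{-1}$. Guided by the reduction procedure, one searches at $n=13$ for a parameter on which complex conjugation is not realised by an inner automorphism, and then writes down an explicit irreducible $\chi$ certifying $\chi(g)\neq\chi(g^{-1})$. For $n>13$, the same configuration sits inside $U_n(2)$, for instance as an embedded pattern subgroup, and one lifts the construction accordingly.

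The genuinely delicate point is the sharpness of the threshold: one must simultaneously rule out any non-ambivalent class for $n\le 12$ and exhibit one at $n=13$, so the hard part is to understand the combinatorial reason that $n=13$ is the first dimension in which an asymmetric configuration can appear --- something like the smallest $n$ for which the poset of superdiagonal positions admits enough room to support a Galois-asymmetric orbit under the reduction procedure. Once this threshold is isolated, the first part becomes a finite verification and the second part becomes a direct construction.
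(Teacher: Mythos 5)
Your outline for $n\le 12$ is, in broad strokes, the paper's own route: reduce via contractions and let a computer finish. But the step you wave at as ``Frobenius--Schur bookkeeping compatible with the inductive step'' is precisely the nontrivial ingredient the paper has to prove, namely Proposition~\ref{char2eqv}: a contraction (Type A or B) preserves both real-valuedness and real type. Its real-type direction is not formal --- it requires decomposing a real module affording $\chi$ into eigenspaces $V_i$ for the central subgroup $1+Y+Z$ and identifying the character of $V_1$ with the contracted character. Without that, and without the computational Theorem~\ref{main} (for $n\le 12$ every core of $(T_n(q),\chi)$ is small), ``verify $\nu\equiv 1$'' is not a proof but a restatement of the problem; note also that the paper does not compute indicators at all: small core $\Rightarrow$ well-induced (Proposition~\ref{wipres}), and a character induced from a linear character trivial on $1+K^2$ is automatically of real type when $q$ is even, since $(1+K)/(1+K^2)$ is elementary abelian.

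For $n\ge 13$ there is a genuine gap. The ambivalence reformulation (non-real-valued irreducibles correspond to non-ambivalent classes) is correct but does no work: you never produce the element $g$ not conjugate to $g^{-1}$, nor the character, and ``one searches \dots and writes down an explicit irreducible $\chi$'' is exactly the content to be supplied. The paper instead gets existence structurally: Theorem~\ref{main} shows that $q(q-1)^{13}$ characters of $U_{13}(q)$ have core $(K,\phi)$ with $K\cong x\mF_q[x]/(x^3)$ and $1+K^2\nsubseteq\ker\phi$; for $q$ even such a linear $\phi$ takes a value $\pm i$, and Proposition~\ref{char2eqv}(i) transports non-real-valuedness back up to $\chi\in\Irr(U_{13}(q))$ --- no conjugacy-class analysis is needed. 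Finally, your passage from $n=13$ to $n>13$ via an ``embedded pattern subgroup'' is the wrong mechanism: neither restriction to nor induction from a subgroup preserves non-real-valuedness, and an element non-ambivalent in a subgroup can perfectly well become conjugate to its inverse in the larger group. The correct lift goes through the quotient, not the subgroup: the matrices of $T_n(2)$ supported in columns $14,\ldots,n$ form an ideal $I$ with $T_n(2)/I\cong T_{13}(2)$, so $U_{13}(2)$ is a quotient of $U_n(2)$ and one inflates the non-real-valued character (equivalently, pulls back the non-ambivalent class), which is why the threshold propagates upward.
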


Marberg~\cite{Marberg2008} found an explicit description of the irreducible characters of $U_{13}(2)$ that are not real-valued. In particular, he proved the following result, 
conjectured in~\cite{IK2005}.

\begin{thm}[\cite{Marberg2008}, Theorem 9.2]\label{Marberg}  There is precisely one complex conjugate pair $\{\chi,\bar\chi\}$ of irreducible characters of $U_{13}(2)$ which 
are not real-valued, and $\chi(1)=2^{16}$. 
All the other characters of $U_{13}(2)$ are of real type.
\end{thm}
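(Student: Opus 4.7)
\medskip
\noindent\textbf{Proof plan.}
My plan is to derive Theorem~\ref{Marberg} by applying the reduction procedure developed in this paper to the algebra $T_{13}(\mF_2)$, and then to read off, from the resulting output, both the Galois-action data needed to count non-real-valued characters and the Frobenius--Schur indicators needed to decide which real-valued characters are of real type.

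First, I would run the reduction on $T_{13}(\mF_2)$. This produces a finite collection of combinatorial data, each piece parametrising a family of irreducible characters of $U_{13}(2)$ of a common degree $2^e$ (a power of $2$ by Theorem~\ref{powerq}). This is essentially the same enumeration that underlies the computation of $N_{n,e}(q)$ for $n\le 13$ announced in the abstract, so once the reduction is carried out the characters are indexed explicitly.

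Second, I would analyse how complex conjugation acts on the output. For $U_n(q)$ the map $g\mapsto g^{-1}$ is conjugate, via the antidiagonal permutation matrix, to the transpose--inverse map, which in turn corresponds to an explicit anti-involution on the underlying nilpotent algebra. One expects the reduction to be functorial with respect to this involution, so that each reduced datum is either fixed (in which case the corresponding characters are real-valued) or lies in an orbit of size $2$ (in which case it contributes a conjugate pair of genuinely complex characters). I would then verify that for $n=13$ exactly one non-trivial orbit appears, and that the two characters attached to it have degree $2^{16}$.

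Third, for each fixed datum I would compute the Frobenius--Schur indicator
\[
\nu_2(\chi) \;=\; \frac{1}{|U_{13}(2)|}\sum_{g\in U_{13}(2)}\chi(g^2),
\]
using the inductive description of $\chi$ furnished by the reduction, and verify that $\nu_2(\chi)=+1$ in each case, so that $\chi$ is realisable over $\mR$.

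The principal obstacle will be managing the combinatorial complexity at $n=13$: the enumeration of reduced data and the case-by-case analysis of the involution action form the heaviest part of the work, and it is plausible that some computer assistance is required, as in the original treatment of $N_{n,e}(q)$. A more delicate secondary difficulty is the Frobenius--Schur step: $\nu_2=+1$ is not automatic for $2$-groups, as the quaternion group $Q_8$ shows, so one must extract enough structural information from the reduction to exclude $\nu_2=-1$, presumably by leveraging the Isaacs--Karagueuzian real-type result for $n\le 12$ to dispose of all characters factoring through a suitable parabolic-type quotient, and handling the genuinely new characters that first appear at $n=13$ by a direct argument.
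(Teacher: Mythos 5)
Your overall shape (run the reduction on $T_{13}(\mF_2)$, then decide real-valuedness and real type family by family) matches the paper, but the two steps that would actually decide the theorem have genuine gaps. The first is your treatment of complex conjugation. You never prove that the reduction is compatible with the flip anti-involution; the contraction process makes many non-canonical choices (which central elements, which ``good pairs'', which orbit representatives), so ``one expects the reduction to be functorial'' is precisely the missing ingredient. What is actually needed is that a single contraction step preserves \emph{and reflects} real-valuedness (and real type); in the paper this is Proposition~\ref{char2eqv}, proved not via any involution on the algebra but from the induction formula $\chi=\xi^{1+J}$ together with the uniqueness statement of Lemma~\ref{adjustment} (and, for real type, a module-theoretic decomposition of a real module under the central subgroup $1+Y+Z$). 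Worse, even granting equivariance, your inference ``reduced datum fixed by the involution $\Rightarrow$ the corresponding characters are real-valued'' is false: a conjugation-stable family can have its members swapped in pairs. This is exactly what happens in the one exceptional family at $n=13$: it is stable under conjugation, contains two characters when $q=2$, and \emph{neither} is real-valued. So your criterion would misclassify precisely the characters the theorem is about. The paper instead detects non-reality at the level of the core: the exceptional characters contract to a linear character $\phi$ of $1+K$ with $K\cong x\mF_q[x]/(x^3)$ and $1+K^2\nsubseteq\ker\phi$, and for $q$ even such a $\phi$ takes a value $\pm i$, whence $\chi$ is not real-valued by the transfer principle.

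The second gap is the Frobenius--Schur step. Summing $\chi(g^2)$ over $U_{13}(2)$ (a group of order $2^{78}$) family by family is not feasible as described, and your proposed shortcut via Isaacs--Karagueuzian for $n\le 12$ only handles characters that factor through smaller quotients, which most characters of $U_{13}(2)$ do not. The paper avoids indicators entirely: the computation shows every non-exceptional character is \emph{well-induced}, i.e.\ induced from a linear $\phi$ of a subgroup $1+K$ with $1+K^2\subseteq\ker\phi$; such a $\phi$ factors through the elementary abelian group $(1+K)/(1+K^2)$, so it takes values $\pm1$ and is of real type, and induction preserves realisability over $\mR$ -- no $Q_8$-type obstruction can arise. (Alternatively one again uses the transfer of ``real type'' along contractions, which is the same missing proposition as above.) Without either the well-induced observation or a proved transfer principle, your plan cannot rule out $\nu_2=-1$ for the bulk of the characters, nor establish that the two exceptional degree-$2^{16}$ characters form a genuinely complex conjugate pair.
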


We will give an independent proof of this result, using different methods, and will show that essentially the same pattern holds for arbitrary $q$. If $q$ is odd, 
no algebra group $1+J$ defined over $\mF_q$ has a non-trivial real-valued irreducible character. However, another concept captures similar structural properties of 
characters for arbitrary $q$. As usual, we denote by $\Irr(G)$ the set of all complex irreducible characters of a finite group $G$.

\begin{defi}\label{widef} Let $1+J$ be an algebra group. We say that $\chi\in \Irr(1+J)$ is \emph{well-induced} if there exist a subalgebra $K$ of $J$ and a linear character $\phi$ of $K$ such that $\phi^{1+J}=\chi$ and $1+K^2 \subseteq \ker\phi$.
\end{defi}

\begin{remark}
By a theorem of Halasi~\cite{Halasi2006}, every irreducible character of an algebra group $1+J$ is induced from a linear character of some subgroup $1+K$ where
$K$ is a subalgebra of $J$. 
\end{remark}

Clearly, if $q$ is a power of $2$ then every well-induced $\chi\in \Irr(1+J)$ is of real type: if $K$ and $\phi$ are as in Definition~\ref{widef}, then $(1+K)/(1+K^2)$ is elementary abelian, so $\phi$ is of real type, whence $\chi$ is too. The converse may not be true in general; however, it happens to hold
for the groups $U_{13}(2^d)$. We now state one of the main results of the present paper, which implies Theorem~\ref{Marberg}.

\begin{thm}\label{wi} Let $q$ be a prime power. If $n\le 12$, then all irreducible characters of $\Irr(U_n (q))$ are well-induced. However, there are precisely 
$q(q-1)^{13}$ characters of $U_{13}(q)$ which are not well-induced, and all such characters have degree $q^{16}$. If $q$ is a power of $2$ then 
none of these $q(q-1)^{13}$ characters is real-valued.
\end{thm}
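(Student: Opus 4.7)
The plan is to apply the reduction procedure for characters of algebra groups developed earlier in the paper, specialised to $U_n(q)$ for $n \le 13$. The procedure gives a combinatorial parametrisation of $\Irr(1+J)$ sharp enough to determine character degrees; I would augment it so that one can also read off whether each character is well-induced. The first task is to reformulate well-inducedness combinatorially: a linear character $\phi$ of $1+K$ satisfies $1+K^2 \subseteq \ker\phi$ if and only if $\phi$ factors through the abelian quotient $(1+K)/(1+K^2) \cong K/K^2$, hence is of the form $\phi(1+x) = \psi(\lambda(x))$ for a fixed non-trivial additive character $\psi \colon \mF_q \to \mC^\times$ and some $\lambda \in K^*$ vanishing on $K^2$. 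Thus well-inducedness amounts to the Kirillov-type polarisation condition, while Halasi's theorem only provides the existence of an inducing linear character without this extra constraint. I would translate the existence of such a polarisation into a local combinatorial condition at each step of the reduction.

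Next I would enumerate the combinatorial types arising for $U_n(q)$ with $n \le 13$ and verify, type by type, that for $n \le 12$ a compatible pair $(K,\lambda)$ producing each $\chi$ with $\lambda|_{K^2}=0$ can always be found. At $n=13$ exactly one exceptional type emerges for which no such polarisation exists; its characters are parametrised by thirteen parameters ranging over $\mF_q^\times$ and one ranging over $\mF_q$, giving the claimed count $q(q-1)^{13}$ of characters, each of degree $q^{16}$. In the specialisation $q=2$ this reduces to the complex-conjugate pair identified by Marberg in Theorem~\ref{Marberg}.

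For $q$ a power of $2$ I would finally show that none of these characters is real-valued. Since $\chi$ is not well-induced, any inducing linear character $\phi$ of an algebra subgroup $1+K$ cannot be of the polarisation form $\psi\circ\lambda$; otherwise $\chi$ would already be well-induced. In characteristic $2$, polarisation characters take values in $\{\pm 1\}$ (since $\psi$ does), whereas other linear characters of $1+K$ need not. In the combinatorial parametrisation, the distinguished $\mF_q$-valued parameter encodes the value of $\phi$ on an element of $1+K^2$, and I would verify that complex conjugation acts non-trivially on this parameter while $U_{13}(q)$-conjugation preserves it, forcing $\bar\chi\neq\chi$. The main obstacle will be the combinatorial case analysis in the second step: ruling out well-inducedness simultaneously for all subalgebras $K$ and linear functionals $\lambda$ in the single exceptional type at $n=13$, and showing that no obstruction arises for any type at $n\le 12$, is the delicate combinatorial core of the argument.
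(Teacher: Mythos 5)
Your overall strategy coincides with the paper's: run the reduction (contraction) machinery on $U_n(q)$, $n\le 13$, and classify what survives, having first noted that a linear $\phi$ with $1+K^2\subseteq\ker\phi$ is exactly a ``polarisation'' character factoring through $K/K^2$. The problem is that everything that makes the theorem true is deferred rather than proved. In the paper the statement is deduced from Theorem~\ref{main}, which determines the cores of all pairs $(T_n(q),\chi)$ for $n\le 13$ and is established by the algorithmic computation of Sections~\ref{alg}--\ref{output}, together with Propositions~\ref{wipres} and~\ref{char2eqv}, which show that contractions preserve well-inducedness, real-valuedness and real type; the proof of Proposition~\ref{wipres} for Type~B contractions is itself a delicate Clifford/Mackey argument, since an inducing pair $(L,\phi)$ need not sit inside the centraliser $C_J(Y)$ and has to be replaced by $(L\cap C)+Y$ with a corrected extension. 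Your proposal asserts that well-inducedness can be ``translated into a local combinatorial condition at each step'' and that one then ``enumerates the combinatorial types, type by type,'' but you explicitly identify this enumeration --- no obstruction for every type at $n\le 12$, exactly one exceptional type at $n=13$ carrying $q(q-1)^{13}$ characters of degree $q^{16}$ --- as the main obstacle and do not carry it out. That enumeration \emph{is} the theorem (in the paper it is a substantial computer calculation), so as it stands the proposal contains a genuine gap rather than a proof.

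The real-valuedness part also has a concrete error. You propose to detect $\chi\ne\bar\chi$ via a parameter ``encoding the value of $\phi$ on an element of $1+K^2$'' on which complex conjugation is claimed to act non-trivially. For the exceptional core $K\cong x\mF_q[x]/(x^3)$ with $q$ even, the subgroup $1+K^2$ is elementary abelian, so $\phi$ takes only the values $\pm1$ there and conjugation fixes this datum; the non-real values appear on elements of order $4$, and the paper produces them by choosing $\beta$ with $\beta^2=\a$ (possible because squaring is bijective on $\mF_q$ for $q$ even) so that $(\b x)^2=z$ and $\phi(1+\b x)=\pm i$. Moreover, passing from $\phi\ne\bar\phi$ at the core to $\chi\ne\bar\chi$ for the original character requires knowing that the reduction is a bijection compatible with complex conjugation (Corollary~\ref{bijection} and Proposition~\ref{char2eqv}\eqref{realvaleqv}); your appeal to conjugation-invariance of an orbit parameter does not substantiate this, since induced characters attached to distinct parameters could a priori coincide without such an injectivity statement.
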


We will also prove that a real-valued character of a unitriangular group need not be of real type, 
answering a question of Isaacs and Karagueuzian (\cite{IK2005}, Problem 1.3).

\begin{thm}\label{size25} If $q$ is even, there exists a character of $U_{25}(q)$ that is real-valued but is not of real type.
\end{thm}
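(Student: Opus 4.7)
The plan is to construct the desired $\psi\in\Irr(U_{25}(q))$ from the non-real-valued irreducible characters $\chi,\bar\chi\in\Irr(U_{13}(q))$ supplied by Theorem~\ref{wi}, exploiting the identity $25=2\cdot 13-1$. Inside $T_{25}(q)$ place two ``overlapping'' copies of $T_{13}(q)$ on rows/columns $\{1,\ldots,13\}$ and $\{13,\ldots,25\}$; call them $T_{13}^{(1)}$ and $T_{13}^{(2)}$, and let $\mathrm{Off}=\mathrm{span}\{E_{i,l}:1\le i\le 12,\ 14\le l\le 25\}$. One checks directly that $T_{25}(q)=T_{13}^{(1)}\oplus T_{13}^{(2)}\oplus\mathrm{Off}$, that $T_{13}^{(2)}T_{13}^{(1)}=0$ and $T_{13}^{(1)}T_{13}^{(2)}\subseteq\mathrm{Off}$, and that $\mathrm{Off}\cdot\mathrm{Off}=0$. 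Consequently $N=1+\mathrm{Off}$ is a normal abelian subgroup of $U_{25}(q)$ with quotient $U_{13}(q)\times U_{13}(q)$, and the centre $Z=Z(U_{25}(q))\cong(\mF_q,+)$ sits inside $N$. The elementary commutator $[1+E_{1,13},\,1+E_{13,25}]=1+E_{1,25}$ shows that the Heisenberg-like pairing $U_{13}^{(1)}\times U_{13}^{(2)}\to N$ hits $Z$ non-trivially.

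The strategy is then to apply the paper's reduction procedure (a form of Clifford theory over $N$) to a character $\lambda\in\hat N$ whose restriction to $Z$ is non-trivial (hence of order $2$, since $\mathrm{char}\,\mF_q=2$). Let $T$ be the stabiliser of $\lambda$ in $U_{25}(q)$, and let $\tilde\lambda$ be an extension of $\lambda$ to $T$. For a carefully chosen $\lambda$, the Clifford correspondence should send the pair $(\chi,\bar\chi)$, viewed as a character of $T/N$ via both $U_{13}(q)$-factors (or appropriate sub-quotients thereof), to an irreducible $\psi\in\Irr(U_{25}(q)\mid\lambda)$. The real-valuedness of $\psi$ should follow because complex conjugation swaps $\chi$ and $\bar\chi$, and this swap is absorbed into the cocycle of the Clifford extension --- even though no literal swap of the two $U_{13}(q)$ factors exists as an inner automorphism of $U_{25}(q)$.

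For the Frobenius--Schur indicator, evaluating $\sum_g\psi(g^2)$ coset-by-coset modulo $N$ produces a ``diagonal'' part (from $g$ with $g^2\in N$) that vanishes because $\nu_2(\chi)=\nu_2(\bar\chi)=0$, and a ``crossed'' part controlled by the Heisenberg sign $\lambda(z)$ for a specific non-trivial central $z\in Z$; the order-$2$ character $\lambda$ gives $\lambda(z)=-1$, yielding $\nu_2(\psi)=-1$. The main obstacle is the structural setup: selecting $\lambda$ and $T$ so that the Clifford correspondence produces a genuinely real-valued irreducible character that really uses both $\chi$ and $\bar\chi$ (rather than collapsing onto a pair of complex conjugate ones), and tracking the Heisenberg cocycle through the reduction so that the crossed contribution lands on a non-trivial element of $Z$. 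Once that structural step has been carried out, the FS indicator calculation itself is a mechanical Mackey computation.
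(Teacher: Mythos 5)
Your proposal stops exactly where the proof would have to begin, and you say so yourself: the "structural setup" that you defer is the entire content of the theorem. Concretely, the following are asserted but never established. (1) You never choose $\lambda$, so the stabiliser $T$ is unknown; for any $\lambda$ nontrivial on $Z$, $T$ is a \emph{proper} subgroup of $U_{25}(q)$ whose image in $U_{25}(q)/N\cong U_{13}(q)\times U_{13}(q)$ is a proper subgroup of each factor, so the phrase ``$(\chi,\bar\chi)$ viewed as a character of $T/N$ via both $U_{13}(q)$-factors'' does not even type-check without further work (one must know how $\chi$ behaves on the relevant stabiliser subgroup). (2) You assume an extension $\tilde\lambda$ of $\lambda$ to $T$ while simultaneously invoking ``the cocycle of the Clifford extension''; if the cocycle is nontrivial no such extension exists, and either way the twisting has to be computed, since real-valuedness of $\psi=(\tilde\lambda\rho)^{1+J}$ amounts to $\bar\rho\cdot(\overline{\tilde\lambda}\,\tilde\lambda^{-1})\cong\rho$, a condition you do not verify for any candidate $\rho$ built from $\chi,\bar\chi$. (3) The Frobenius--Schur computation is only gestured at: the claim that the ``diagonal part'' is controlled by $\nu_2(\chi)=\nu_2(\bar\chi)=0$ and the ``crossed part'' by a single sign $\lambda(z)=-1$ is plausible but is precisely the Mackey/Heisenberg bookkeeping that would constitute the proof, and it depends on the unresolved items (1)--(2). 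The easy facts you do prove (the decomposition $T_{25}(q)=T_{13}^{(1)}\oplus T_{13}^{(2)}\oplus\mathrm{Off}$, $\mathrm{Off}^2=0$, the commutator hitting the centre) are correct but are only the stage setting.

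For comparison, the paper avoids Clifford theory over $N=1+\mathrm{Off}$ altogether and instead runs its contraction machinery: starting from $T_{25}(q)$ it performs the eight steps of the $U_{13}$ example on the indices $1,\ldots,13$ and then their mirror image on $25,\ldots,13$ (sixteen applications of Lemma~\ref{fast} under Hypothesis~\ref{hyp31}), producing an explicit $8$-dimensional algebra $E$ together with an injection $F:\Irr(1+E)\to\Irr(U_{25}(q))$ that, by Proposition~\ref{char2eqv}, preserves both ``real-valued'' and ``of real type''. A Type A quotient $E/I$ then yields, for $q=2$, the quaternion group $1+(E/I)\cong Q_8$, whose degree-$2$ character is real-valued with indicator $-1$; pushing it through $F$ finishes the proof. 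So your intuition about $25=2\cdot 13-1$ and gluing two copies of the $U_{13}$ phenomenon matches the paper's choice of example, but the paper's mechanism is the core/contraction calculus, whereas your route would require a substantial, currently missing, Clifford-theoretic and cocycle analysis to become a proof.
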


In addition, we will describe an algorithm computing the values of $N_{n,e}(q)$ for $n\le 13$ and arbitrary $e$ and $q$ and will observe that Conjecture~\ref{conj_poly} holds for $n\le 13$. This extends a calculation of Isaacs~\cite{Isaacs2007} for $n\le 9$. We note that the total number of conjugacy classes of $U_n (q)$, $n\le 13$, has been computed by Arregi and Vera-Lopez, see~\cite{pol2003}.

This computation and the proof of the results above are based on certain operations, which we call~\emph{contractions}. Each contraction
replaces a pair $(J,\chi)$, where $\chi\in\Irr(1+J)$, with another such pair $(J',\chi')$ according to certain rules; in particular, $\dim J'<\dim J$. 
If $(J,\chi)$ is replaced with a pair $(K,\phi)$ 
by a series of such steps and $(K,\phi)$ cannot be contracted any further then we will call $(K,\phi)$ a \emph{core} of $(J,\chi)$. Such a core seems to capture, 
to some extent, structural properties of $\chi$ and the interplay between $\chi$ as a group-theoretic object and the nilpotent algebra $J$.
In particular, we will see that the core $(K,\phi)$ is essentially unique and that reducing to the core preserves properties such as being well-induced 
and real-valued.

\begin{remark} A different reduction process for characters of algebra groups has been developed by Boyarchenko~\cite{Boyarchenko2006}.
\end{remark}

The definition of contraction and proofs of results concerning contractions 
are given in Section~\ref{contractions}. At the end of that section we state a result (Theorem~\ref{main}) 
which describes the cores of all pairs $(T_n (q),\chi)$, where $\chi\in\Irr(U_n (q)$, for $n\le 13$. Theorem~\ref{wi} will follow immediately. 
The algorithm that proves Theorem~\ref{main} and computes the values of $N_{n,e}(q)$ for $n\le 13$ is expounded over
 Sections~\ref{alg},~\ref{spec_cases} and~\ref{output}. (Section~\ref{spec_cases} contains a fast-track procedure used in special cases, whereas 
Section~\ref{alg} gives a more general algorithm.) The polynomials $N_{n,e}(q)$ for $10\le n\le 13$ are given in the Appendix. Finally, in Section~\ref{examples} we give examples of certain contractions, aiming to describe explicitly where the characters of $U_{13}(q)$ which are not well-induced come from. Section~\ref{examples} also 
contains an example proving Theorem~\ref{size25}.

\bigskip

\ni\textbf{Notation.} Our notation is mostly standard. We assume all algebras to be finite-dimensional. If $Y$ is a subset of an algebra $J$, we shall write
$$
C_J (Y) = \{ x\in J \! : \; yx=xy \text{ for all } y\in Y\}.
$$
If $Z$ is an ideal in $J$ and $X$ is a vector subspace of $J$, we shall sometimes abuse notation by writing $X$ for the subspace $(X+Z)/Z$ of $J/Z$. 
If $U$ is a left $J$-module and $u\in U$, we write
$$
\Ann_J (u) = \{ x\in J \!:\; xu=0\}.
$$
If $x_1,\ldots,x_n$ are elements of a vector space
we denote by $\lan x_1,\ldots,x_n \ran$ the linear span of $x_1,\ldots,x_n$. 

Let $N$ be a normal subgroup of a finite group $G$. Each character $\th$ of $G/N$ can be inflated to a character $\chi$ of $G$: $\chi(g)=\th(gN)$ for all $g\in G$. 
We shall write $\th=\Defl_N (\chi)$ or $\th=\tilde{\chi}$, and $\chi=\Infl_{N}(\th)$. Suppose $\chi$ is a character of $G$. We use the notation
$$
Z(\chi) = \{ g\in G \!:\; \chi(g)=\chi(1) \}.
$$
Whenever $\th$ is a character of a subgroup $H$ of $G$, we denote the character of $G$ induced from $\th$ by $\th^G$ and the restriction of $\chi$ to $H$ by $\chi_H$. The inner product of two characters $\chi$ and $\chi'$ of $G$ is denoted by $\lan \chi,\chi'\ran$. If $N$ is a normal subgroup of $G$ and $\th\in\Irr(N)$, we write
$$
\Irr(G|\th) = \{ \chi\in \Irr(G) \!: \; \lan \chi_N,\th\ran > 0 \}.
$$

As usual, $\mF_q[x]$ is the algebra of polynomials in one variable over $\mF_q$. If $f\in \mF_q[x]$, then $(f)=f\mF_q[x]$ is the ideal generated by $f$ in $\mF_q[x]$. If $k$ and $l$ are integers, we use the notation
$$
[k,l] = \{ i\in \mZ \!:\; k\le i\le l \}.
$$
If $A$ and $B$ are finite sets, we denote by $M_{A,B}(q)$ the set of matrices over $\mF_q$ with rows indexed by elements of $A$ and columns, by elements of $B$. Note that, if $C$ is another finite set, there is a natural multiplication 
$$
M_{A,B} (q) \times M_{B,C}(q) \ra M_{A,C}(q).
$$

\medskip
\ni\textbf{Acknowledgements.} Much of this work was carried out during the author's stays as a Leibniz Fellow at the Mathematisches Forschungsinstitut Oberwolfach. The author is very grateful to MFO staff for their hospitality. The author would like to thank Martin Isaacs for checking some of the results detailed here against previously 
available data and Persi Diaconis for bringing his attention to Eric Marberg's thesis~\cite{Marberg2008}.

\section{Contractions}\label{contractions}

Let $J$ be a finite-dimensional nilpotent algebra and $\chi\in \Irr(1+J)$. We shall call $(J,\chi)$ an \emph{AC-pair}. 
Sometimes we will specify an AC-pair only by giving the character, provided it is clear what the algebra is.
There is a natural notion of isomorphism of AC-pairs: two AC-pairs $(J,\chi)$ and $(J',\chi')$ are said to be \emph{isomorphic} if there exists an algebra isomorphism 
$f:J \ra J'$ such that $\chi'(1+f(x))=\chi(1+x)$ for all $x\in J$. Usually, we shall consider AC-pairs only up to isomorphism. We call an AC-pair $(J,\chi)$ \emph{small} if 
$\dim J\le 1$. We shall say that $\dim J$ and $\chi(1)$ are the \emph{dimension} and the \emph{degree} of an AC-pair $(J,\chi)$, respectively.

\begin{defi}\label{goodpair} Let $J$ be a finite-dimensional nilpotent algebra over $\mF_q$. We call a pair $(Z,Y)$ of $1$-dimensional subspaces of $J$ \emph{good} if the following conditions are satisfied:
\begin{enumerate}[(i)]
    \item\label{condZ} $JZ=ZJ=0$; 
    \item\label{condY} $JY\subseteq Z$ and $YJ\subseteq Z$;
    \item\label{condC} the centraliser $C \! :=C_J (Y)$ satisfies $YC=CY=0$;    
    \item\label{condCproper} $C\ne J$.
  \end{enumerate}
\end{defi}

Let $J$ be a nilpotent algebra over $\mF_q$, and suppose $Z$ is a $1$-dimensional ideal of $J$. Throughout the paper, we shall use the notation
$$
\Irr(1+J,Z) =\{\chi\in\Irr(1+J) \!:\; 1+Z\nsubseteq \ker\chi\}.
$$

Our analysis of characters of algebra groups is largely based on the following simple result.
\begin{lem}\label{adjustment}
Let $J$ be a nilpotent algebra over $\mF_q$. Suppose $(Z,Y)$ is a good pair of $1$-dimensional subspaces in $J$ and let $C=C_J (Y)$. 
Let $\chi\in \Irr(1+J,Z)$.
 Then $C$ is an ideal of $J$ and $\chi_{1+C}$ has a unique irreducible constituent $\theta$ such that $1+Y\subseteq \ker \theta$. Moreover, $\Stab_{1+J}(\theta)=1+C$ and 
$\theta^{1+J}=\chi$.
\end{lem}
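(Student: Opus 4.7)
The plan is to apply Clifford's theorem with respect to the abelian normal subgroup $1+Y+Z$ of $1+J$. First I would establish the algebraic skeleton. Fix a generator $y_0$ of $Y$. The map $\phi\colon J\ra Z$, $a\mapsto ay_0-y_0a$, is well-defined by~(\ref{condY}), has kernel $C$, and is surjective because~(\ref{condCproper}) and $\dim Z=1$ rule out the zero map. This gives $[1+J : 1+C]=q$, and a short check using $YC=CY=0$ and $JZ=ZJ=0$ shows that $C$ is an ideal. Condition~(\ref{condCproper}) also forces $Y\ne Z$ (else $C=C_J(Z)=J$), hence $Y\cap Z=0$. Together with $(Y+Z)^2\subseteq Z$ and the fact that $Y$ is one-dimensional (so $yy'=y'y$ for $y,y'\in Y$), this shows that $1+Y+Z$ is an abelian subgroup of order $q^2$.

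Since $1+Z$ is central in $1+J$ and $\chi\in\Irr(1+J,Z)$, the restriction $\chi_{1+Z}$ is a multiple of a single nontrivial linear character $\lambda$, and there are exactly $q$ extensions of $\lambda$ to $1+Y+Z$; let $\theta_0$ denote the unique extension that is trivial on $1+Y$. The core computation, using conditions~(\ref{condZ})--(\ref{condC}), is
\[
(1+a)^{-1}(1+y+z)(1+a)\;=\;1+y+\bigl(z+ya-ay\bigr),
\]
so conjugation by $1+a$ multiplies any extension $\theta'$ of $\lambda$ by the character $\nu_a$ of $(1+Y+Z)/(1+Z)$ defined by $\nu_a(1+y+z)=\lambda(1+ya-ay)$. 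Because $\lambda$ is nontrivial on the one-dimensional group $1+Z$, the condition $\nu_a=1$ forces $ya-ay=0$ for every $y\in Y$, i.e.\ $a\in C$. Hence $a\mapsto\nu_a$ has kernel $C$ and image the full Pontryagin dual of $(1+Y+Z)/(1+Z)$, so $1+J$ acts transitively on the $q$ extensions of $\lambda$ with $\Stab_{1+J}(\theta_0)=1+C$.

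Finally I would assemble the conclusion using Clifford theory. The transitivity ensures that $\theta_0$ is a constituent of $\chi_{1+Y+Z}$; Clifford's theorem applied to $1+Y+Z\vtl 1+J$ then yields a unique $\th\in\Irr(1+C\mid\theta_0)$ with $\th^{1+J}=\chi$. Since $1+C$ fixes $\theta_0$, the restriction $\th_{1+Y+Z}$ is a positive multiple of $\theta_0$ only, so any element of $1+J$ stabilising $\th$ must also stabilise $\theta_0$ and hence lie in $1+C$; the reverse inclusion is immediate, giving $\Stab_{1+J}(\th)=1+C$. Applying Clifford's theorem to $1+C\vtl 1+J$ expresses $\chi_{1+C}$ as a sum of $1+J$-conjugates of $\th$; for $g\notin 1+C$ the restriction $(\th^g)_{1+Y+Z}$ is a multiple of $\theta_0^g\ne\theta_0$, which is nontrivial on $1+Y$, so $\th$ is the unique irreducible constituent of $\chi_{1+C}$ whose kernel contains $1+Y$. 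The main obstacle is the conjugation computation above together with the identification of $\ker(a\mapsto\nu_a)$ with $C$; once those are in hand, the lemma reduces to a formal double application of Clifford theory.
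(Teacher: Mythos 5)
Your proposal is correct and follows essentially the same route as the paper: both hinge on the conjugation formula $(1+y+z)^{1+a}=1+y+(z+ya-ay)$, the identification of $1+C$ as the stabiliser of the extensions of $\lambda$ to the abelian normal subgroup $1+Y+Z$, and a Clifford-theoretic endgame. Your minor variations (proving $C$ is an ideal and $[1+J:1+C]=q$ via the map $a\mapsto ay_0-y_0a$ instead of $J^2\subseteq C$, and constructing $\theta$ via the Clifford correspondence over $1+Y+Z$ rather than conjugating an arbitrary constituent of $\chi_{1+C}$) are cosmetic.
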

\begin{proof} Since $J^2 Y=Y J^2=0$, we have $J^2 \subseteq C$. Hence $C$ is an ideal of $J$. Since $1+Z$ is contained in the centre of $1+J$, 
we have $1+Z\subseteq Z(\chi)$. Thus $\chi_{1+Z}=m\lda$ for some non-trivial linear character $\lda$ of $1+Z$ and an integer $m>0$.  Let $\mu$ be an extension of $\lda$ to the abelian group $1+Y+Z$. We claim that $\Stab_{1+J}(\mu)=1+C$. As $1+Y+Z$ is a central subgroup of $1+C$, it is clear that $1+C$ stabilises $\mu$. On the other hand, if 
$x\in J\setminus C$, then there exists $y\in Y$ such that $[y,x]\ne 0$. Multiplying $y$ by a non-zero scalar if necessary, we can ensure that 
$1+[y,x]\notin\ker \lda$. However, using the equalities $J^2 Y=JYJ=YJ^2=0$, we deduce 
$$
\mu((1+y)^{1+x})=\mu(1+y+yx-xy)=\mu(1+y)\mu(1+[y,x]) \ne \mu (1+y),
$$
so $1+x\notin \Stab_{1+J}(\mu)$, and the claim is proved. 

We deduce that $(1+J)/(1+C)$ acts freely and transitively on the $q$ extensions of $\lda$ to $1+Y+Z$. Since $1+Y+Z$ decomposes as a direct product of $1+Y$ and $1+Z$, there exists a unique extension $\nu$ of $\lda$ to $1+Y+Z$ such that $1+Y\subseteq \ker \nu$. Let $\eta$ be an irreducible constituent of $\chi_{1+C}$. Then $\eta_{1+Y+Z}=l\k$ for some extension $\k$ of $\lda$ and an integer $l>0$. We have proved that there exists $g\in 1+J$ such that $\k^g=\nu$. 
The character $\theta \! :=\eta^g$ is a constituent of $\chi_{1+C}$ such that $\theta_{1+Y+Z}=l\mu$, and consequently 
$1+Y\subseteq \ker\theta$. Since $\Stab_{1+J}(\mu)=1+C$, we have $\Stab_{1+J}(\theta)=1+C$. If, for some $h\in 1+J$, $\theta^h$ is another irreducible constituent of $\chi_{1+C}$ with kernel containing $1+Y$ then $\mu^h=\mu$, so $h\in 1+C$ and therefore $\theta^h=\theta$.
\end{proof}

We define two types of \emph{contraction}, each of which replaces an AC-pair $(J,\chi)$ with one of a smaller dimension (the term contraction will mean both the process of such replacement and the 
result of that process):

\begin{enumerate}[(A)]
\item[\textbf{A}] Suppose $I$ is an ideal of $J$ such that $1+I\subseteq \ker\chi$. Then $(J/I,\tilde{\chi})$ is a contraction of $(J,\chi)$. (Note that
$1+I$ is a normal subgroup of $1+J$ and $1+(J/I)$ can be identified with $(1+J)/(1+I)$.)
\item[\textbf{B}] Suppose that $(Z,Y)$ is a good pair in $J$ and $1+Z\nsubseteq \ker\chi$. 
By Lemma~\ref{adjustment} there exists a unique $\theta\in \Irr(C)$ such that $\theta | \chi_{1+C}$ and $1+Y \subseteq \ker \theta$. We say that replacing $(J,\chi)$ with
$(C/Y,\tilde{\theta})$ is a contraction. We say that this contraction~\emph{employs} the good pair $(Z,Y)$.
\end{enumerate}

We say that an AC-pair $(J'\chi')$ is an \emph{offspring} of $(J,\chi)$ if it is obtained from $(J,\chi)$ by a series of contractions. 
We call $(J,\chi)$ \emph{uncontractible} if it has no offsprings of dimension smaller than $\dim J$. If $(J',\chi')$ is an uncontractible offspring of an AC-pair 
$(J,\chi)$, we say that $(J',\chi')$ is a \emph{core} of $(J,\chi)$.

\begin{remark} It is not difficult to see that, whenever $(J,\chi)$ is a non-small AC-pair, either there is a contraction of $(J,\chi)$ of Type A which reduces the dimension or one can find $1$-dimensional subspaces $Y$ and $Z$ in $J$ such that conditions (i), (ii) and (iv) of Definition~\ref{goodpair} are satisfied and $\chi\in\Irr(1+J,Z)$. Thus
condition (iii) is the key one.
\end{remark}

The following result is an immediate consequence of Lemma~\ref{adjustment}. It shows that one can use contraction to reduce classification of characters of an algebra group $1+J$ to classification of characters of smaller algebra groups provided a good pair can be found in $J$.

\begin{cor}\label{bijection}
Let $J$ be a finite-dimensional nilpotent algebra over $\mF_q$. Suppose $(Z,Y)$ is a good pair in $J$. Let $C=C_{J}(Y)$. Then contractions of Type B yield a bijection between
$\Irr(1+J,Z)$ and $\Irr(1+C/Y,(Z+Y)/Y)$.
The inverse of this map is given by $\tilde{\theta}\mapsto \theta^{1+J}$.
\end{cor}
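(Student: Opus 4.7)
The plan is to exhibit the two maps explicitly and show they are mutually inverse, with the bulk of the work already contained in Lemma~\ref{adjustment}.

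First I would define the forward map. Given $\chi \in \Irr(1+J,Z)$, Lemma~\ref{adjustment} produces a unique irreducible constituent $\theta$ of $\chi_{1+C}$ with $1+Y \subseteq \ker \theta$, and guarantees $\theta^{1+J} = \chi$ and $\Stab_{1+J}(\theta) = 1+C$. Passing to $\tilde{\theta} \in \Irr(1+C/Y)$, I would check that $\tilde{\theta}$ lies in $\Irr(1+C/Y,(Z+Y)/Y)$: in the proof of Lemma~\ref{adjustment}, $\theta_{1+Z}$ is a non-zero multiple of a non-trivial linear character $\lambda$, so $1 + Z \nsubseteq \ker \theta$; combined with $Y \subseteq \ker\theta$ this forces $1 + (Z+Y)/Y \nsubseteq \ker \tilde{\theta}$.

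Next I would construct the reverse map. Given $\tilde{\theta} \in \Irr(1+C/Y,(Z+Y)/Y)$, inflate to $\theta \in \Irr(1+C)$, which satisfies $1+Y \subseteq \ker \theta$ and $1+Z \nsubseteq \ker \theta$. Using conditions~(\ref{condZ}) and~(\ref{condC}) of the good pair, $1 + (Y+Z)$ is central in $1+C$, so $\theta_{1+Y+Z}$ is a multiple of a single linear character $\nu$, which restricts to a non-trivial $\lambda$ on $1+Z$ and is trivial on $1+Y$. The stabiliser calculation in Lemma~\ref{adjustment} applies verbatim: for $x\in J\setminus C$ one picks $y \in Y$ with $1+[y,x] \notin \ker \lambda$, giving $\nu^{1+x}\ne\nu$, whence $\Stab_{1+J}(\nu)=1+C$ and consequently $\Stab_{1+J}(\theta)=1+C$. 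Clifford's correspondence then makes $\chi := \theta^{1+J}$ irreducible. Since $1+Z$ is central, $\chi_{1+Z} = [1+J:1+C]\,\theta_{1+Z}$, which is a non-zero multiple of $\lambda$, so $\chi \in \Irr(1+J,Z)$.

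Finally I would verify the two maps are mutually inverse. In one direction, the equality $\theta^{1+J} = \chi$ is already part of Lemma~\ref{adjustment}. In the other direction, starting from $\tilde{\theta}$ and forming $\chi = \theta^{1+J}$, Frobenius reciprocity gives $\langle \chi_{1+C},\theta\rangle = \langle\chi,\chi\rangle = 1$, so $\theta$ is a constituent of $\chi_{1+C}$ annihilating $1+Y$; by the uniqueness clause of Lemma~\ref{adjustment} it is \emph{the} constituent singled out there, hence deflating returns the original $\tilde{\theta}$.

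The proof is essentially formal bookkeeping once Lemma~\ref{adjustment} is in hand; the only step requiring care is the reverse direction, where one must re-derive $\Stab_{1+J}(\theta)=1+C$ from the hypothesis $1+Y\subseteq\ker\theta$ and $1+Z\nsubseteq\ker\theta$ so that Clifford theory applies. No further obstacle is expected.
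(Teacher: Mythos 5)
Your proof is correct and follows the route the paper intends: the paper states the corollary as an immediate consequence of Lemma~\ref{adjustment}, and your argument simply fills in the standard details (uniqueness and induction from that lemma in one direction, inflation plus the same stabiliser computation and Clifford correspondence in the other). Nothing in your verification deviates from or exceeds what the paper's implicit argument requires.
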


\begin{thm}\label{cores} Any two cores of an AC-pair $(J,\chi)$ are isomorphic.
\end{thm}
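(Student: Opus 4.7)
My plan is to establish that the contraction relation is confluent on isomorphism classes of AC-pairs, which will give uniqueness of normal forms, i.e.\ of cores. Since every contraction strictly reduces $\dim J$, the relation is terminating, so Newman's diamond lemma reduces the problem to local confluence: whenever $(J,\chi)$ admits two single-step contractions to $(J_1,\chi_1)$ and $(J_2,\chi_2)$, I would need to show that these share a common offspring up to isomorphism. I would then split the analysis by the types of the two contractions.

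When both contractions are of Type A, with ideals $I_1,I_2\subseteq J$ satisfying $1+I_j\subseteq\ker\chi$, it follows that $1+(I_1+I_2)\subseteq\ker\chi$ as well, so both $(J/I_1,\tilde\chi)$ and $(J/I_2,\tilde\chi)$ contract further via Type A to the common pair $(J/(I_1+I_2),\tilde\chi)$. For the mixed (A,B) case, with an ideal $I$ (so that $1+I\subseteq\ker\chi$) and a good pair $(Z,Y)$ (so that $\chi\in\Irr(1+J,Z)$), the key preliminary observations are $Z\cap I=0$ and $IY=YI=0$, the latter because $IY\subseteq JY\subseteq Z$ and $IY\subseteq I$. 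When $Y\nsubseteq I$, I would verify that $((Z+I)/I,(Y+I)/I)$ is again a good pair in $J/I$ with centraliser $(C+I)/I$, so that Type B applies to $(J/I,\tilde\chi)$; simultaneously, $1+(C\cap I)\subseteq\ker\theta$ lets one apply Type A to $(C/Y,\tilde\theta)$ via the ideal $(C\cap I+Y)/Y$, and the two outputs are identified via the canonical isomorphism $C/(C\cap(Y+I))\cong(C+I)/(Y+I)$ together with the uniqueness supplied by Lemma~\ref{adjustment}. The subcase $Y\subseteq I$ needs a separate small argument, deflating $\theta$ modulo $C\cap I$ and exhibiting the resulting character of $1+(C+I)/I$ as reachable from $(J/I,\tilde\chi)$ by a short sequence of contractions via a good pair to be found inside $J/I$.

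The (B,B) case, with two good pairs $(Z_1,Y_1)$ and $(Z_2,Y_2)$, centralisers $C_1,C_2$ and associated characters $\theta_1,\theta_2$, is what I expect to be the main obstacle. My plan is to show that after contracting via $(Z_1,Y_1)$ to $(C_1/Y_1,\tilde\theta_1)$, a suitable descendant of $(Z_2,Y_2)$---most naturally constructed from $Z_2$, $Y_2\cap C_1$, and $Y_1$---remains a good pair in $C_1/Y_1$, and that the corresponding character output agrees with the one obtained by exchanging the roles of the two pairs. Checking the four conditions of Definition~\ref{goodpair} for the descended pair and matching the two character outputs through the uniqueness clause of Lemma~\ref{adjustment} will require careful case analysis, with the main splittings according to whether $Z_1=Z_2$ and how $Y_2$ interacts with $C_1$ (in particular whether $Y_2\subseteq C_1$). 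Once local confluence has been verified in all cases, Newman's lemma will give confluence of the reflexive-transitive closure of the contraction relation on isomorphism classes of AC-pairs, and since cores are precisely the normal forms of this relation, the conclusion that any two cores of $(J,\chi)$ are isomorphic follows.
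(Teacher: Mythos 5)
Your overall framework is exactly the paper's: the paper proves local confluence (its Lemma~\ref{cores_aux}: any two one-step contractions of $(J,\chi)$ have a common offspring) and then runs the standard Newman-style induction on $\dim J$. Your (A,A) case coincides with the paper's, and your (A,B) sketch is essentially the paper's Case~2; note only that your subcase $Y\subseteq I$ is vacuous, since $C\ne J$ gives $x\in J$, $y\in Y$ with $[x,y]\in Z\setminus\{0\}$, and $Y\subseteq I$ would force $[x,y]\in I\cap Z=0$.

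The genuine gap is in the (B,B) case, exactly where you anticipate the main obstacle: your single strategy (descend $(Z_2,Y_2)$ to a good pair in $C_1/Y_1$, contract again by Type B, and match the outputs by symmetry and Lemma~\ref{adjustment}) fails in two of the three configurations that occur. First, if $Y_1Y_2=Y_2Y_1=0$ but $C_1=C_2=C$, then every element of $C$ centralises $Y_2$, so the image of $(Z_2,Y_2)$ in $C/Y_1$ violates condition (iv) of Definition~\ref{goodpair} and no Type B contraction is available; nor can you simply kill $Y_2$ by Type A, since in general $1+Y_2\nsubseteq\ker\theta_1$ (only $\theta_2$ has $1+Y_2$ in its kernel). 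The paper's Subcase 3a instead uses that $\theta_2=\theta_1^g$ for some $g\in 1+J$, performs Type A contractions by the ideals $(Y_1+Y_2^{g^{-1}})/Y_1$ and $(Y_1^g+Y_2)/Y_2$, and checks that conjugation by $g$ is an isomorphism between the two results --- a conjugation argument absent from your plan. Second, if $Y_1Y_2+Y_2Y_1\ne 0$, then $Y_2\cap C_1=0$, so there is no descendant of $(Z_2,Y_2)$ inside $C_1/Y_1$ at all; the correct move is not a further contraction but a direct isomorphism: with $D=C_1\cap C_2$ one has $C_1=D\oplus Y_1$ and $DY_1=Y_1D=0$, whence $(C_1/Y_1,\tilde\theta_1)\cong(D,(\theta_1)_{1+D})\cong(C_2/Y_2,\tilde\theta_2)$, i.e.\ the two contractions are already isomorphic (paper's Subcase 3c). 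Your proposed case splittings ($Z_1=Z_2$, and whether $Y_2\subseteq C_1$) miss the decisive dichotomy $C_1=C_2$ versus $C_1\ne C_2$; only in the remaining configuration ($Y_1Y_2=Y_2Y_1=0$, $C_1\ne C_2$) does your descend-and-recontract plan go through, and even there the goodness check and the identification of the two outputs need character-theoretic input via Lemma~\ref{adjustment} (e.g.\ $Z_2\ne Y_1$ because $1+Z_2\nsubseteq\ker\theta_1$ while $1+Y_1\subseteq\ker\theta_1$), as in the paper's Subcase 3b. As it stands, local confluence --- the heart of the theorem --- is not established by your argument.
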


The theorem will easily follow once we prove the following result. 

\begin{lem}\label{cores_aux} 
 Suppose $\mbf R_1$ and $\mbf R_2$ are both contractions of an AC-pair $(J,\chi)$. Then $\mbf R_1$ and $\mbf R_2$ have a common offspring.
\end{lem}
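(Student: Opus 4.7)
My plan is to prove the lemma by a case analysis on the types of the two contractions $\mathbf{R}_1,\mathbf{R}_2$ (each of Type A or Type B), and in each case exhibit a common offspring by applying further contractions to both sides.

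\textbf{Case A--A.} Suppose $\mathbf{R}_i = (J/I_i, \tilde\chi)$ with $1+I_i \subseteq \ker\chi$ for $i=1,2$. Set $I=I_1+I_2$, which is an ideal. Since $(1+I_1)(1+I_2) = 1+I_1+I_2+I_1 I_2 = 1+I$ (using $I_1 I_2 \subseteq I$), and $\ker\chi$ is a subgroup containing both $1+I_i$, we get $1+I\subseteq \ker\chi$. Thus $(J/I,\tilde\chi)$ is a Type A contraction of both $\mathbf{R}_1$ and $\mathbf{R}_2$.

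\textbf{Case A--B.} Say $\mathbf{R}_1=(J/I,\tilde\chi)$ and $\mathbf{R}_2=(C/Y,\tilde\theta)$, the latter using good pair $(Z,Y)$ with $C=C_J(Y)$. Since $\chi\in\Irr(1+J,Z)$ while $1+I\subseteq \ker\chi$, we have $Z\cap I=0$. In the generic subcase where $Y\not\subseteq I$, I will verify that $((Z+I)/I,\,(Y+I)/I)$ is a good pair in $J/I$; the crux is condition~(iii), which comes from $[J,Y]\subseteq Z$ together with $Z\cap I=0$ forcing $C_{J/I}((Y+I)/I)=(C+I)/I$, whence $YC=CY=0$ lifts to the required annihilation mod $I$. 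The ensuing Type B contraction of $\mathbf{R}_1$ has underlying algebra $(C+I)/(Y+I)$. In parallel, $(I\cap C + Y)/Y$ is an ideal of $C/Y$ contained in $\ker\tilde\theta$ (because $1+(I\cap C)\subseteq \ker\chi\cap(1+C)\subseteq \ker\theta$, since $\theta\mid \chi_{1+C}$), so the corresponding Type A contraction of $\mathbf{R}_2$ yields algebra $C/(I\cap C+Y)\cong (C+I)/(Y+I)$. The resulting characters agree by the uniqueness in Lemma~\ref{adjustment}. In the subcase $Y\subseteq I$, I would instead contract $\mathbf{R}_2$ by $(I\cap C)/Y$ (which still lies in $\ker\tilde\theta$) to reach $(C/(I\cap C),\bar\theta)$, and construct a matching reduction of $\mathbf{R}_1$ by choosing a suitable good pair in $J/I$ whose $Y$-component lies outside the image of $I$.

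\textbf{Case B--B.} Say $\mathbf{R}_i$ uses good pair $(Z_i,Y_i)$ with $C_i=C_J(Y_i)$. The decisive subcase is $Z_1=Z_2=:Z$ with $Y_1\ne Y_2$: the natural candidate common offspring is $((C_1\cap C_2)/(Y_1+Y_2),\xi)$ for a suitable $\xi$, reached from $\mathbf{R}_1=(C_1/Y_1,\tilde\theta_1)$ by a Type B contraction using (the image of) $(Z,Y_2)$ as a good pair in $C_1/Y_1$, and symmetrically for $\mathbf{R}_2$. When $Z_1\ne Z_2$ one uses that both $Z_i$ are central ideals annihilated by $J$, and reduces to the previous situation via an intermediate contraction.

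The main obstacle I expect is Case B--B: verifying condition~(iii) of Definition~\ref{goodpair} in the intermediate algebras such as $C_1/Y_1$ is delicate, since the centralizer-annihilation condition does not in general pass to subquotients. In particular, when $[Y_1,Y_2]\ne 0$ in $Z$ (so $Y_2\not\subseteq C_1$), the image of $Y_2$ in $C_1/Y_1$ need not form a good pair as it stands, and one likely has to modify it by an element of $1+C_1$ (or of the analogous stabilizer from Lemma~\ref{adjustment}) before the verification goes through. Making this adjustment canonical is the technical heart of the argument.
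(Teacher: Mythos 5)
Your Cases A--A and A--B essentially match the paper's argument (and in A--B the subcase $Y\subseteq I$ that you treat separately is in fact vacuous: $Y\subseteq I$ together with $[J,Y]\subseteq Z$ and $Z\cap I=0$ would force $[J,Y]=0$, i.e.\ $C_J(Y)=J$, contradicting condition (iv) of Definition~\ref{goodpair}, so no separate construction is needed there). The genuine gap is Case B--B, which you leave open exactly where the key idea is required. In the subcase $Y_1Y_2+Y_2Y_1\ne 0$ no ``canonical adjustment'' of a good pair, and indeed no further contraction, is needed: since $Y_2\not\subseteq C_1$ and $Y_1\not\subseteq C_2$, one gets $C_1=D\oplus Y_1$ with $D=C_1\cap C_2$ and $DY_1=Y_1D=0$, so $1+C_1=(1+D)\times(1+Y_1)$, the projection $D\to C_1/Y_1$ is an algebra isomorphism carrying $(\theta_1)_{1+D}$ to $\tilde\theta_1$, and symmetrically for $\mathbf{R}_2$; hence $\mathbf{R}_1\cong\mathbf{R}_2$ already and they are trivially a common offspring. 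Your plan of modifying the image of $(Z,Y_2)$ inside $C_1/Y_1$ and continuing to contract is not the right move here, and it is precisely the step you concede you cannot complete.

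Your ``decisive subcase'' is also mishandled when $C_1=C_2$ (which certainly occurs with $Y_1Y_2=Y_2Y_1=0$): then $[C_1,Y_2]=0$, so the image of $(Z_2,Y_2)$ in $C_1/Y_1$ violates condition (iv) of Definition~\ref{goodpair} (its centraliser is everything) and no Type B contraction exists; moreover $1+Y_2$ need not lie in $\ker\theta_1$, so $(C_1\cap C_2)/(Y_1+Y_2)$ is not the right algebra. The correct device is Clifford theory: $\theta_2=\theta_1^{\,g}$ for some $g\in 1+J$, and one performs Type A contractions killing $Y_1+Y_2^{\,g^{-1}}$ on the $\mathbf{R}_1$ side and $Y_1^{\,g}+Y_2$ on the $\mathbf{R}_2$ side, conjugation by $g$ furnishing the isomorphism between the two results. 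When $C_1\ne C_2$ your candidate does work, but you still owe the symmetry argument that it is a contraction of $\mathbf{R}_2$ as well as of $\mathbf{R}_1$; the paper gets this by proving that the irreducible constituent of $\chi_{1+C_1\cap C_2}$ whose kernel contains $1+Y_1+Y_2$ is unique. Finally, organising the case by $Z_1=Z_2$ versus $Z_1\ne Z_2$, with an unspecified ``intermediate contraction'' in the latter situation, is not the relevant dichotomy: the argument should split according to whether $Y_1Y_2+Y_2Y_1=0$ and whether $C_1=C_2$, and the case $Z_1\ne Z_2$ requires no special reduction.
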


\begin{proof} 
\textbf{Case 1:} both contractions in the hypothesis are of Type A. 
\nopagebreak

Then $\mbf R_1 = (J/I_1,\Defl_{1+I_1}(\chi))$ and $\mbf R_2 = (J/I_2,\Defl_{1+I_2}(\chi))$ where $I_1$ and $I_2$ are ideals of $J$ with $1+I_1+I_2\subseteq \ker \chi$. Clearly, 
$(J/(I_1+I_2),\Defl_{1+I_1+I_2}(\chi))$ is a common offspring of $\mbf R_1$ and $\mbf R_2$.

\smallskip
\ni\textbf{Case 2:} exactly one of the two contractions is of Type B, say, the contraction to $\mbf R_1$. 
\nopagebreak

Let $(Z,Y)$ be the good pair employed in that contraction and let 
$C=C_J (Y)$. Then $\mbf R_1 = (C/Y,\tilde{\theta})$ for a certain constituent $\theta$ of $\chi_{1+C}$, and $\mbf R_2=(J/I,\tilde{\chi})$, where $I$ is an ideal of $J$. 
 Since $[J,Y] \subseteq Z$, we have $[I,Y]\subseteq I\cap Z$. Since $1+I \subseteq \ker \chi$ but $1+Z \not\subseteq \ker\chi$, we have $I\cap Z=0$, so $[I,Y]=0$, and therefore $I\subseteq C$. Since $1+I\subseteq \ker\chi$ and $\th$ is a constituent of $\chi_{1+C}$, we see that $1+I\subseteq\ker\th$. Let $\phi\in\Irr(1+C/(I+Y))$ be the deflation of $\th$.
Then $(C/(I+Y),\phi)$ is a contraction of $(C/Y,\tilde{\th})$ of Type A. 

To complete the proof in this case, we show that $(C/(I+Y),\phi)$ can be obtained via a contraction of $\mbf R_2$ of Type B 
employing the good pair $((Y+I)/I,(Z+I)/I)$. This will be clear once we show that $C'=C$ where 
$C'=\{ x\in J \!:\, [x,Y]\subseteq I\}$. We certainly have $C\subseteq C'$. Since 
$\dim(J/C)=1$, it suffices to show that $C'\ne J$. However, since $C\ne J$, there exist $x\in J$ and $y\in Y$ such that $[x,y]\in Z\setminus\{0\}$. 
As $Z\cap I=0$, we have $[x,y]\notin I$, so $C'\ne J$, as required.

\smallskip
\ni\textbf{Case 3:} both contractions are of Type B. 
\nopagebreak

Suppose the contraction to $\mbf R_1$ employs the good pair $(Z_1,Y_1)$, and the contraction to $\mbf R_2$ employs $(Z_2,Y_2)$.
 For $i=1,2$, let $C_i=C_J (Y_i)$. Then $\mbf R_i=(C_i/Y_i,\tilde{\th}_i)$ where $\th_i$ 
is an irreducible constituent of $\chi_{1+C_i}$. We distinguish three subcases.

\smallskip
\ni \textbf{Subcase 3a:} $Y_1 Y_2=Y_2 Y_1=0$ and $C_1=C_2=: \! C$. 
\nopagebreak

As $\th_1$ and $\th_2$ are both constituents of $\chi_{1+C}$, there exists $g\in 1+J$ such that $\th_2=\th_1^g$. 
Since 
$1+Y_1\subseteq \ker \th_1$, we have
 $1+Y_1^g\subseteq \ker \th_2$. Moreover, $C^g=C$ because $C$ is an ideal of $J$, so 
we have $C Y_1^g=Y_1^g C=0$. Thus, if we set $\phi_2$ to be the character of $1+C/(Y_1^g+Y_2)$ 
obtained from $\th_2$ by deflation, then $\mbf S_2=(C/(Y_1^g+Y_2),\phi_2)$ is a Type A contraction of $\mbf R_2$. 
Similarly, we obtain a Type A contraction $\mbf S_1=(C/(Y_1+Y_2^{g^{-1}}),\phi_1)$ of $\mbf R_1$, where $\phi_1=\phi_2^{g^{-1}}$. 
Conjugation by $g$ is an isomorphism between $\mbf S_1$ and $\mbf S_2$.

\smallskip
\ni\textbf{Subcase 3b:} $Y_1 Y_2 = Y_2 Y_1=0$ and $C_1\ne C_2$. 
\nopagebreak

Then clearly $Z_2\subseteq C_1$, $Y_2\subseteq C_1$ and $Y_1\subseteq C_2$. We claim that $((Z_2+Y_1)/Y_1,(Y_2+Y_1)/Y_1)$ is a good pair in $C_1/Y_1$. 
To see this, it is enough to check that 
\begin{equation}\label{eq3b}
C_1\cap C_2=\{ x\in C_1 \! : \; [x,Y_2]\subseteq Y_1\}.
\end{equation}
We have $1+Z_2\subseteq Z(\chi)$ and $1+Z_2\nsubseteq \ker\chi$, whence $1+Z_2\subseteq Z(\th_1)$ and $1+Z_2\nsubseteq\ker(\th_1)$. Therefore, $Z_2\ne Y_1$ and, 
as $[J,Y_2]\subseteq Z_2$, equality~\eqref{eq3b} follows, and with it the claim. 
We have already shown that $1+Z_2\nsubseteq\ker(\tilde{\th}_1)$, so a Type B contraction employing $(Z_2,Y_2)$ can be applied to $\mbf R_1$. The result of this contraction is 
the AC-pair $\mbf S=(C_1\cap C_2/(Y_1+Y_2),\tilde{\phi})$ where $\phi\in \Irr(1+C_1\cap C_2)$ is a constituent of $\chi_{1+C_1\cap C_2}$ and $1+Y_1+Y_2\subseteq \ker\phi$. 

We claim that $\phi$ is the unique irreducible 
constituent of $\chi_{1+C_1\cap C_2}$ with this property. Indeed, suppose that $\phi'$ is another such constituent. Then there exists
a constituent $\om\in\Irr(1+C_1)$ of $\chi_{1+C_1}$ such that $\phi'|\om_{1+C_1\cap C_2}$. Hence $1_{1+Y_1}$ is a constituent of $\om_{1+Y_1}$ and, as 
$1+Y_1$ is a normal subgroup of $1+C_1$, we see that $1+Y_1\subseteq\ker\om$. Therefore, by Lemma~\ref{adjustment}, $\om=\th_1$, whence by Lemma~\ref{adjustment} applied again $\phi'=\phi$. The uniqueness of $\phi$ shows that $\mbf S$ is a contraction not only of $\mbf R_1$, but also, by symmetry, of $\mbf R_2$.

\smallskip
\ni\textbf{Subcase 3c:} $Y_1 Y_2 + Y_2 Y_1\ne 0$. 
\nopagebreak

It follows from the definition of a good pair that $Y_2 \not\subseteq C_1$ and $Y_1\not\subseteq C_2$. Let $D=C_1\cap C_2$. Then 
$DY_1=Y_1 D=0$ and $C_1=D\oplus Y_1$ as a vector space, so the group $1+C_1$ decomposes as a direct product of $1+D$ and $1+Y_1$. 
Hence the character $\phi \! :=(\th_1)_{1+D}$ is irreducible. Define the algebra homomorphism 
$f: D \ra C_1/Y_1$ by $f(x)=x+Y_1$. By our assumption, $D\cap Y_1 = 0$, so $f$ is injective, and therefore surjective. Also, $f$ induces a group homomorphism $f:1+D \ra 1+C_1/Y_1$ in the obvious way, and $\tilde{\th}_1 \circ f = \phi$. Thus $\mbf S:=(D,\phi)$ is isomorphic to $\mbf R_1$. Similarly, $\mbf S$ is isomorphic to $\mbf R_2$, so $\mbf R_1$ and $\mbf R_2$ are isomorphic. 
\end{proof}
 
\begin{proof}[Proof of Theorem~\ref{cores}] We argue by induction on $\dim J$. Suppose $\mbf T_1$ and $\mbf T_2$ are cores of $(J,\chi)$.  Let $\mbf R_1$ and $\mbf R_2$ be the contractions of $(J,\chi)$ obtained in the first steps of sequences of contractions leading to $\mbf T_1$ and $\mbf T_2$ respectively. We may assume that $\dim R_i<\dim J$ for $i=1,2$. By Lemma~\ref{cores_aux}, $\mbf R_1$ and $\mbf R_2$ have a common offspring $\mbf S$, say.
 Let $\mbf Q$ be a core of $\mbf S$. Since $\mbf Q$ and $\mbf T_1$ are both cores of $\mbf R_1$, the AC-pairs $\mbf Q$ and $\mbf T_1$ are isomorphic by the inductive hypothesis. Similarly, $\mbf Q \cong \mbf T_2$, whence $\mbf T_1\cong \mbf T_2$. 
\end{proof}

Next we show that contractions preserve a number interesting properties of characters of algebra groups. 

\begin{prop}\label{wipres} Let $(K,\theta)$ be an offspring of an AC-pair $(J,\chi)$. Then $\theta$ is well-induced if and only if $\chi$ is well-induced. 
\end{prop}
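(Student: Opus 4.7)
The plan is to reduce the proposition to a single contraction step by induction on the length of the chain of contractions, and then treat the two types separately. Any Type B contraction $(J,\chi)\to(C/Y,\tilde\theta)$ factors through the intermediate AC-pair $(C,\theta)$: $Y$ is an ideal of $C$ (because $YC=CY=0$), $1+Y\subseteq\ker\theta$ by construction, so $(C,\theta)\to(C/Y,\tilde\theta)$ is itself a Type A contraction, and $\chi=\theta^{1+J}$ by Lemma~\ref{adjustment}. Thus the task splits into~(A) the statement for a Type A contraction and~(B) the bridge claim that $\theta\in\Irr(1+C)$ is well-induced if and only if $\chi=\theta^{1+J}\in\Irr(1+J)$ is well-induced.

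For~(A), the direction ``$\tilde\chi$ well-induced $\Rightarrow\chi$ well-induced'' is a routine inflation argument. For the converse, I would induct on $\dim I$ and reduce to the base case $\dim I=1$ by filtering $I$ through a chain of ideals of $J$; this is possible because any minimal non-zero ideal of a nilpotent $\mF_q$-algebra $I_0$ satisfies $JI_0=I_0J=0$. In the base case the $1$-dimensional ideal $I$ is therefore central in $J$, so $1+I$ is central in $1+J$. Since $\chi=\phi^{1+J}$ is irreducible (for any witness $(L,\phi)$), Mackey's irreducibility criterion forces $1+I\subseteq 1+L$, i.e.~$I\subseteq L$. Combined with $1+I\subseteq\ker\phi$ (which follows from $1+I\subseteq\ker\chi$ and the identity $\ker(\phi^{1+J})=\bigcap_{g\in 1+J}g(\ker\phi)g^{-1}$ together with the normality of~$1+I$), this lets one deflate $\phi$ modulo $I$ to produce a witness $(L/I,\tilde\phi)$ for~$\tilde\chi$ with $1+(L/I)^2\subseteq\ker\tilde\phi$.

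For~(B), the direction ``$\theta$ well-induced $\Rightarrow\chi$ well-induced'' is immediate by transitivity of induction. For the converse, let $(L,\phi)$ witness $\chi$ well-induced. A central-character argument using the non-triviality of $\lambda:=\phi|_{1+Z}$ forces $Z\subseteq L$. I would then apply Mackey's formula to $\chi|_{1+C}$: since $1+C$ is normal in $1+J$ of index~$q$, there are two cases, $L\subseteq C$ and $L+C=J$. If $L\subseteq C$, then $\chi|_{1+C}=\sum_g(\phi^g)^{1+C}$ over a transversal of $1+C$ in $1+J$; each summand is irreducible of degree $\theta(1)$ (by comparison with the Clifford decomposition $\chi|_{1+C}=\sum_i\theta^{g_i}$ coming from $\Stab_{1+J}(\theta)=1+C$), and the summand with $1+Y$ in its kernel is~$\theta$, so for a suitable~$g_0$ the pair $(L^{g_0},\phi^{g_0})$ witnesses $\theta$ well-induced. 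If $L+C=J$, Mackey gives $\chi|_{1+C}=(\phi|_{1+(L\cap C)})^{1+C}$, and comparing the Mackey evaluation at a non-zero $y_0\in Y$, which yields $\chi(1)\phi(1+y_0)$ if $Y\subseteq L$, with a direct computation of $\phi^{1+J}(1+y_0)$, which vanishes because $\sum_{z\in Z}\lambda(1+z)=0$, rules out $Y\subseteq L$. Hence $Y\cap L=0$, the sum $(L\cap C)+Y$ is direct (using $Y(L\cap C)=(L\cap C)Y=0$), and inducing in stages through $1+((L\cap C)+Y)$ identifies $\theta$ as $(\alpha^*)^{1+C}$, where $\alpha^*$ is the extension of $\phi|_{1+(L\cap C)}$ trivial on $1+Y$; the condition $1+((L\cap C)+Y)^2\subseteq\ker\alpha^*$ follows from $1+L^2\subseteq\ker\phi$ together with the identity $Y^2=0$, which holds because $Y\subseteq C_J(Y)=C$ and $YC=0$.

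The main obstacle is the direct-vs.-Mackey comparison at $1+y_0$ in~(B) that excludes the case $Y\subseteq L$, $L+C=J$; with that case ruled out, the two remaining subcases produce explicit well-induced witnesses for $\theta$ from one for~$\chi$, and the bookkeeping of the $K^2$-condition reduces cleanly via the good-pair identities $Y^2=0$ and $Z\subseteq L$.
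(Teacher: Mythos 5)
Your proposal is correct and follows essentially the same route as the paper's proof: reduce to a single contraction, handle Type A by deflating the witness, and for Type B restrict to $1+C$ via Mackey, split on $L\subseteq C$ versus $L+C=J$, exclude $Y\subseteq L$, and take the extension of $\phi_{1+L\cap C}$ to $1+((L\cap C)+Y)$ that is trivial on $1+Y$ as the new witness. The only local differences are that you rule out $Y\subseteq L$ by evaluating $\chi$ at $1+y_0$ (using $Z\subseteq L$ and $\sum_{z\in Z}\lambda(1+z)=0$) where the paper instead uses that the $q$ constituents of $\chi_{1+C}$ have distinct restrictions to $1+Y$, and your Type A step detours through a filtration of $I$ and Mackey's criterion where the inclusion $\ker\chi\subseteq\ker\phi\subseteq 1+L$ already yields $I\subseteq L$ directly.
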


\begin{proof}  We may assume that $(K,\th)$ is obtained from $(J,\chi)$ by just one contraction. Suppose first that this contraction is of Type A, so 
$K=J/I$ for some ideal $I$ and $\th=\tilde{\chi}$. Assume that $\chi$ is well-induced. Then there exists a subalgebra $L$ of $J$ and a linear character $\psi$ of $1+L$ such that $\psi^{1+J}=\chi$ and $1+L^2\subseteq \ker\psi$. By~\cite{IsaacsBook}, Lemma 5.11, we have $\ker \chi\subseteq \ker\psi\subseteq 1+L$, whence $1+I\subseteq \ker\psi$.  
Therefore, $\psi$ can be deflated to a character $\tilde{\psi}\in\Irr(1+L/I)$ and $\tilde{\psi}^{1+J/I}=\th$. Since $1+(L/I)^2 \subseteq \ker\tilde{\psi}$, we see that $\th$ is well-induced. The converse is clear.   

Now consider the case when the contraction is of Type B. Let $(Z,Y)$ be the good pair employed by the contraction, so $K=C_J (Y)/Y=C/Y$, and let $\xi=\Infl_{1+Y}(\th)$.
By Corollary~\ref{bijection}, $\chi=\xi^{1+J}$. Thus if $\theta$ is well-induced then so is $\chi$. 

Suppose then that $\chi$ is well-induced, that is, there exist a subalgebra $L$ of $J$ and a linear character $\phi\in\Irr(1+L)$ such that 
$\phi^{1+J}=\chi$ and $1+L^2 \subseteq \ker\phi$. By the first part of the proof, $\th$ is well-induced whenever $\xi$ is, so it suffices to show that $\xi$ is well-induced. 
First, consider the case $L\subseteq C$. Since $\phi^{1+J}$ is irreducible, so is $\eta \! :=\phi^{1+C}$. By Clifford theory there exists $g\in 1+J$ such that $\eta^g=\xi$. Then the pair $(L^g,\phi^g)$ witnesses the fact that $\xi$ is well-induced.

Finally, assume $L\not\subseteq C$. Let $\psi=\phi_{1+L\cap C}$. Let $\xi_1=\xi,\xi_2,\ldots,\xi_q$ be the irreducible constituents of 
$\chi_{1+C}$. Since $\dim(J/C)=1$, we have $LC=J$, whence $(1+L)(1+C)=1+J$ (as $C$ is an ideal of $J$). Thus by the Mackey formula 
\begin{equation}\label{psi}
\psi^{1+C}=(\phi^{1+J})_{1+K}= \xi_1+\cdots+\xi_q.
\end{equation}
If $Y\subseteq L$, then, as $1+Y$ is a central subgroup of $1+C$, we have $(\psi^{1+C})_{1+Y}=l\mu$ where $\mu$ is a linear character of $1+Y$ and 
$l\in \mN$. However, by the proof of Lemma~\ref{adjustment}, $\xi_1,\ldots,\xi_q$ have distinct restrictions to $1+Y$. Thus $Y\cap L=0$. Let $M=(L\cap C)+Y$. Since $CY=YC=0$, 
we see that $1+M$ decomposes as a direct product of $1+L\cap C$ and $1+Y$. Let $\om$ be the unique extension of $\psi$ to $1+M$ such that $1+Y\subseteq \ker \om$. Then $\om^{1+C}(1)=\psi^{1+C}(1)/q=\theta(1)$ and $1+Y\subseteq \ker(\omega^{1+K})$. 
As $\om^{1+C}$ is a constituent of $\psi^{1+K}$, we have $\om^{1+K}=\xi_i$ for some $i\in [1,q]$ by~\eqref{psi}. By Lemma~\ref{adjustment}, $\xi$ is the only one of the 
characters 
$\xi_1,\ldots,\xi_q$ whose kernel contains $1+Y$, whence $\om^{1+K}=\xi$.
Moreover, 
$$
1+M^2\subseteq 1+(L\cap C)^2\subseteq \ker\psi \subseteq \ker\om.
$$ 
Hence $\xi$ is well-induced, and so is $\th$.
\end{proof}

\begin{prop}\label{char2eqv} Let $(K,\theta)$ be an offspring of an AC-pair $(J,\chi)$ and suppose $q$ is a power of $2$. Then 
\begin{enumerate}[(i)]
\item\label{realvaleqv} the character $\chi$ is real-valued if and only if $\theta$ is;
\item\label{realiseeqv} the character $\chi$ is of real type if and only if $\theta$ is.
\end{enumerate}
\end{prop}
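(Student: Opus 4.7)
The plan is to induct on the length of the contraction sequence from $(J,\chi)$ to $(K,\theta)$, which reduces immediately to the case of a single contraction. A Type A contraction replaces $(J,\chi)$ with $(J/I,\tilde\chi)$ where $1+I\subseteq\ker\chi$; the character values agree under the identification $1+(J/I)\cong(1+J)/(1+I)$, and any (real) matrix realization on one side inflates or deflates to a (real) matrix realization on the other, since any realization of $\chi$ necessarily has $1+I$ in its kernel. Both parts~(i) and~(ii) are therefore immediate in this case.

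For a Type B contraction employing a good pair $(Z,Y)$, let $C=C_J(Y)$ and write $\theta=\tilde\psi$, where $\psi\in\Irr(1+C)$ is the unique constituent of $\chi_{1+C}$ with $1+Y\subseteq\ker\psi$. By Lemma~\ref{adjustment}, $\psi^{1+J}=\chi$ and $\Stab_{1+J}(\psi)=1+C$. By the Type A case just handled, $\theta$ and $\psi$ share the properties in question, so it suffices to compare $\chi$ with $\psi$. For~(i): induction always preserves real-valuedness, giving one direction; conversely, if $\chi=\bar\chi$, then $\bar\psi$ is a constituent of $\chi_{1+C}$ whose kernel also contains $1+Y$, forcing $\bar\psi=\psi$ by the uniqueness clause of Lemma~\ref{adjustment}.

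For~(ii), the forward implication is the standard fact that induction of a real representation of $1+C$ affording $\psi$ gives a real representation of $1+J$ affording $\chi$. The converse is the main obstacle, which I would treat via Clifford theory. Suppose $W$ is a real $(1+J)$-module affording $\chi$. Since $1+C$ is normal in $1+J$ with $\Stab_{1+J}(\psi)=1+C$, Clifford theory yields $\chi_{1+C}=\psi_1+\cdots+\psi_q$, where the $\psi_i$ are the distinct $(1+J)$-conjugates of $\psi=\psi_1$, each of multiplicity one; correspondingly, $W\otimes_{\mR}\mC$ decomposes as a $(1+C)$-module into isotypic pieces $V_1\oplus\cdots\oplus V_q$, with $V_i$ affording $\psi_i$. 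The complex conjugation $\sigma$ on $W\otimes_{\mR}\mC$ is $(1+J)$-equivariant and sends the $\psi_i$-isotypic piece to the $\bar\psi_i$-isotypic piece. By part~(i), $\psi$ is real-valued (because $\chi$ is, being of real type), and so are all its $(1+J)$-conjugates $\psi_i$; hence $\sigma$ preserves each $V_i$ setwise. The standard decomposition $v=\frac{v+\sigma(v)}{2}+i\cdot\frac{v-\sigma(v)}{2i}$ exhibits $V_i$ as the complexification of the real $(1+C)$-submodule $\{v\in V_i:\sigma(v)=v\}$, so $\psi_i$ is of real type, and in particular $\psi$ (hence $\theta$) is of real type.
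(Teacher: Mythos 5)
Your proof is correct, and the easy parts (Type A, part~(i), and the forward directions via induction from $1+C$) run essentially parallel to the paper's argument; the interesting divergence is in the converse of part~(ii). The paper works directly with a real module $U$ affording $\chi$ and decomposes it over $\mR$ into eigenspaces for the central elementary abelian subgroup $1+Y+Z$: this is exactly where the hypothesis that $q$ is a power of $2$ is used, since it forces the linear characters $\mu_i$ of $1+Y+Z$ to take values $\pm 1$, so the eigenspace decomposition already lives over $\mR$ and each eigenspace is immediately a real $(1+C)$-module affording one $\xi_i$. You instead complexify $W$, take the Clifford isotypic decomposition of $W\otimes_{\mR}\mC$ under the normal subgroup $1+C$ (multiplicity one because $\Stab_{1+J}(\psi)=1+C$ and $\chi=\psi^{1+J}$), use part~(i) to see that $\psi$ and hence all its conjugates $\psi_i$ are real-valued, and then apply Galois descent on each $\sigma$-stable isotypic piece to produce a real form affording $\psi_i$. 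Your route is slightly less elementary (it invokes descent for the conjugate-linear involution) but buys generality: it never uses the parity of $q$ beyond what is packaged in part~(i), and it would apply verbatim whenever $\psi$ is known to be real-valued, whereas the paper's eigenspace trick is tailored to characteristic~$2$. Both arguments rest on the same multiplicity-one fact $\lan\chi_{1+C},\psi\ran=1$ from Lemma~\ref{adjustment}, and your appeal to the uniqueness clause of that lemma in part~(i) is just a restatement of the injectivity of the bijection in Corollary~\ref{bijection} that the paper uses.
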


\begin{proof} As in the previous proof, we may assume that $(K,\theta)$ is obtained from $(J,\chi)$ by one contraction. If this contraction is of Type A, the result is clear. 
So we suppose that the contraction is of Type B and that it employs a certain good pair $(Z,Y)$, whence $K=C_J (Y)/Y=C/Y$. Let $\xi=\Infl_{1+Y}(\th)$. 
 By Corollary~\ref{bijection}, $\chi=\xi^{1+J}$. Hence $\chi$ is real-valued if $\th$ is, and $\chi$ is of real type if $\th$ is. If $\theta$ is not real-valued then $\theta\ne \bar\theta$. Therefore, by Corollary~\ref{bijection},  $\chi=\xi^{1+J}\ne \bar\xi^{1+J}=\bar\chi$, whence $\chi$ is not real-valued.

Finally, suppose $\chi$ is of real type. Let $U$ be an $\mR (1+J)$-module affording $\chi$. Let $\lda$ be the unique irreducible constituent of $\chi_{1+Z}$, and let $\mu_1,\ldots,\mu_q$ be the extensions of $\lda$ to $1+Y+Z$. By the proof of Lemma~\ref{adjustment}, for each $\mu_i$ there exists a unique irreducible constituent $\xi_i$ of $\chi_{1+K}$ such that $\mu_i$ is a summand of $(\xi_i)_{1+Y+Z}$. (We may assume that $\xi=\xi_1$.) Since the group $1+Y+Z$ is elementary abelian, the linear characters $\mu_i$ take values $\pm 1$.
Since $U$ affords $\chi$, we have $uh=\lda(h)u$ for all $h\in 1+Z$ and $u\in U$. 
Let
$$
V_i = \{u\in U \! : \; uh=\mu_i(h) u \; \text{ for all } \; h \in 1+Y+Z \}.
$$ 
Then $U=\oplus_{i=1}^q V_i$. Since $1+Y+Z$ is a central subgroup of $1+C$, each $V_i$ is an $\mR(1+C)$-submodule of $U$. Let $\eta_i$ be the character of $1+K$ afforded by $V_i$. Then $(\eta_i)_{1+Y+Z}$ is a multiple of $\mu_i$, whence $\eta_i=k\xi_i$ for some $k\in \mN$. However, since 
$\lan \xi_i,\chi_{1+K} \ran=1$, we have $\eta_i=\theta_i$ for each $i$. Thus $V_i$ affords $\xi_i$, and in particular $V_1$ affords $\xi$. 
Therefore, $\xi$ is of real type, and so $\th$ is too.
\end{proof}

In general, the core of an AC-pair can be quite large. However, the cores of characters of the unitriangular group $U_n (q)$, $n\le 13$, are not.

\begin{thm}\label{main} Let $q$ be any prime power. If $n\le 12$ then for all $\chi\in \Irr(U_n (q))$ the core of $(T_n (q),\chi)$ is small. 
There are precisely $q(q-1)^{13}$ characters $\chi\in U_{13}(q)$ which do not have a small core. All such characters $\chi$ are of degree $q^{16}$ and have cores 
 the form $(K,\phi)$ where $K$ is the $2$-dimensional commutative algebra $x\mF_q[x]/(x^3)$ and $1+K^2\nsubseteq \ker\phi$.
\end{thm}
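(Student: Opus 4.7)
The plan is to prove Theorem \ref{main} via the algorithmic procedure to be developed in Sections \ref{alg}--\ref{output}. The strategy is to iteratively apply contractions to each AC-pair $(T_n(q), \chi)$, where $\chi \in \Irr(U_n(q))$, classifying the possible outcomes up to AC-pair isomorphism. By Theorem \ref{cores}, any sequence of contractions ending in an uncontractible AC-pair produces the same core up to isomorphism, so we are free to make convenient choices at each step.

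First, I would set up a systematic contraction strategy tailored to $T_n(q)$. A natural heuristic is to prioritize contractions involving the matrix unit $E_{1,n}$: take $Z = \langle E_{1,n}\rangle$, which is always a $1$-dimensional ideal with $JZ = ZJ = 0$, and search for $Y$ making $(Z, Y)$ good. If $1+Z \subseteq \ker \chi$, one instead applies a Type A contraction modulo $Z$. The centralizer $C = C_J(Y)$ inherits a concrete matrix-like description from $T_n(q)$, so iterating produces a tree of AC-pair isomorphism classes whose leaves are the candidate cores. By Theorem \ref{cores}, each leaf is the core of every character mapping to it.

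Second, I would run the procedure for all $n \le 13$ and inspect the leaves. For $n \le 12$, an exhaustive case analysis (automated by the algorithm) shows every leaf AC-pair has dimension at most $1$, hence is small. For $n = 13$, the procedure produces a unique exceptional family of leaves, all isomorphic to $(K, \phi)$ with $K = x\mF_q[x]/(x^3)$ and $1 + K^2 \not\subseteq \ker \phi$. Uncontractibility of this pair is immediate: since $K$ is commutative, $C_K(Y) = K$ for every subspace $Y$, so condition (iv) of Definition \ref{goodpair} fails for every candidate good pair, ruling out Type B contractions; and the only nontrivial ideal of $K$ lying in $\ker \phi$ is $0$, blocking Type A contractions as well.

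Third, I would carry out the counting. The abelian group $1 + K$ has order $q^2$ with $1 + K^2$ a subgroup of order $q$, so the number of linear characters $\phi$ with $1 + K^2 \not\subseteq \ker \phi$ equals $q^2 - q = q(q-1)$. By Corollary \ref{bijection} each Type B contraction sets up a bijection between the relevant character sets, and Type A contractions provide similar bijections onto characters trivial on the corresponding ideal. Composing these bijections along all branches that terminate at the exceptional core gives, according to the output of Section \ref{output}, a multiplier of $(q-1)^{12}$, so the total number of non-small cases is $q(q-1) \cdot (q-1)^{12} = q(q-1)^{13}$. The degree $q^{16}$ is recovered by counting the Type B steps along the reduction (each contributing a factor of $q$ to $\chi(1)$, since the core character is linear). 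The hard part will be the combinatorial bookkeeping at $n = 13$: the branching tree is large enough that the argument is necessarily machine-assisted, and the delicate step is verifying that \emph{no} other non-small leaf arises, which hinges on controlling all AC-pair invariants that survive the contraction process.
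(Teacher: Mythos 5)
Your proposal is correct and follows essentially the same route as the paper: Theorem~\ref{main} is established by running the contraction machinery (justified by Theorem~\ref{cores}, Lemma~\ref{adjustment} and Corollary~\ref{bijection}) as a machine-assisted classification of cores for $n\le 13$, reading off the unique exceptional family $(K,\phi)$ with $K\cong x\mF_q[x]/(x^3)$, its uncontractibility, the count $q(q-1)^{13}$ and the degree $q^{16}$ from the output. The only difference is implementational: the paper does not contract naively on $\lan e_{1,n}\ran$ but uses the pattern-algebra orbit reduction of Section~\ref{spec_cases} (Proposition~\ref{pr4}, Lemmas~\ref{fast} and~\ref{smallac}) with the general algorithm of Section~\ref{alg} as a fallback, which affects feasibility of the $n=13$ computation but not the logic of the argument.
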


We show that Theorem~\ref{wi} follows from this result.

\begin{proof}[Deduction of Theorem~\ref{wi}]
By Propositions~\ref{wipres} and~\ref{char2eqv}, whenever $(T_n (q),\chi)$ has a small core, 
$\chi$ is well-induced and, if $q$ is even, is of real type. 

It remains only to consider characters $\chi\in\Irr(U_{13}(q))$ which have a $2$-dimensional core $(K,\phi)$ of the form given in Theorem~\ref{main}. Since
$\phi$ is linear and $1+K^2\nsubseteq \ker\phi$, we see that $\phi$ is not well-induced, so neither is $\chi$ (by Proposition~\ref{wipres}).
Now suppose $q$ is a power of $2$. 
 Let $z\in K^2$ be such that $1+z\notin\ker\phi$, whence
$\phi(1+z)=-1$. Writing $x$ for $x+(x^3)$, we have $x^2=\a z$ for some $\a\in \mF_q$. As $q$ is a power of $2$, there exists $\b\in\mF_q$ such that $\b^2=\a$. Then $(\b x)^2=z$, so $\phi(\b x)=\pm i$ is not 
real. Since $\phi$ is not real-valued, neither is $\chi$, by Proposition~\ref{char2eqv}.
\end{proof}

Theorem~\ref{main} is proved by a computer calculation described in the next three sections.

\section{The general algorithm}\label{alg}

We now detail an algorithm that analyses characters of algebra groups. The algorithm has been implemented in MAGMA~\cite{MAGMA}. 
None of the group-theoretic capabilities of MAGMA were used, however, so an implementation in any other language would have been essentially similar. 
Our description omits certain shortcuts used to make the program run faster. Also, some types of data are represented in the program in a different way than described here.

We say that an \emph{assembly} is a map $S$ which associates to each prime power $q$ a collection $S(q)$ of AC-pairs defined over $\mF_q$. We will deal with assemblies
that can be encoded combinatorially. We will call two assemblies $S_1$ and $S_2$ \emph{isomorphic} if for each $q$ there is a bijection from $S_1 (q)$ onto $S_2 (q)$ which maps
each AC-pair in $S_1(q)$ to an isomorphic pair.

We describe a way of encoding certain assemblies. Consider data $\mbf A$ consisting of  
\begin{enumerate}[(i)]
\item a set $Q$ of parameters. (The elements of $Q$ will be assumed to vary over $\mF_q$ whenever $q$ is specified.)
\item a set $E$ of restrictions on parameters from $Q$. These restrictions can be of the following forms:
\begin{enumerate}[(a)]
\item inequations $a\ne 0$, where $a\in \mF_q$;
\item multivariate polynomial equations with integer coefficients, with elements of $Q$ acting as variables;
\end{enumerate}
\item a (possibly, empty) linearly ordered ``basis'' set $B$;
\item a map $R:B\times B\times B \ra \{ 0 \} \sqcup \Cal P (Q)$, where $\Cal P (Q)$ denotes the power set of $Q$.
(This map encodes the structure constants of the algebras as explained below.)
\end{enumerate}

Suppose such data $\mbf A$ are given and a prime power $q$ is specified. We write $\Cal V(Q,E,q)$ for the set of all
substitutions $Q\ra \mF_q$ which make the restrictions $E$ hold (naturally, when polynomial equations are interpreted over $\mF_q$, we use the standard map $\mZ \ra \mF_q$ to evaluate the coefficients of polynomials). To each $h\in \Cal V(Q,E,q)$ we associate an (a priori, non-associative) algebra $J(\mbf A,h)$ over $\mF_q$ given by the
 basis $B$ and 
 relations
$$ 
xy=\sum_{\substack{z\in B \\ R(x,y,z)\ne 0} } \left( \prod_{a\in R(x,y,z)} h(a) \right)\! z, \quad x,y\in B.
$$
 We define 
$$
\Cal J(\mbf A,q)= \{ J(\mbf A,h) \! : \; h \in \Cal V(Q,E,q) \},
$$
where the algebras $J(\mbf A,h)$ are assumed to be distinct for different $h$.
We say that the algebras \emph{encoded} by $\mbf A$ are all the algebras $J(\mbf A,h)$ where $h$ runs over $\Cal V(Q,E,q)$ and $q$ runs over all prime powers.

Throughout, we assume that the data $\mbf A$ are such that for all $q$ all the algebras in $\Cal J(\mbf A,q)$ are associative. We also suppose 
that the basis $B$ is ordered so that,
 whenever $R(x,y,z)\ne 0$, $z$ comes after both $x$ and $y$ (so in particular the algebras encoded by $\mbf A$ are nilpotent).
We shall refer to a tuple $\mbf A$ satisfying these condition as \emph{algebraic data}.

If $\mbf A$ is algebraic data we denote by $\Irr(\mbf A)$ the assembly given by
$$
\Irr(\mbf A)(q) = \{ (J(\mbf A,h),\chi) \! : \; h\in \Cal V(Q,E,q) \text{ and } \chi\in\Irr(1+J(\mbf A,h)) \}.
$$
If moreover $z\in B$, where $B$ is the basis set of $\mbf A$, we write $\Irr(\mbf A,z)$ for the assembly given by
$$
\Irr(\mbf A,z)(q) = \{ (J,\chi)\in \Irr(\mbf A)(q) \! : \; \chi\in \Irr(1+J,\lan z\ran) \}. 
$$

Now we describe the output of the algorithm. Informally, given algebraic data $\mbf A$, the aim is to represent $\Irr(\mbf A)(q)$ (for each $q$)
as a disjoint union of certain families, with the families described independently of $q$. 
The 0-th family will consist only of AC-pairs with small cores, 
and the algorithm will attempt to find polynomials in $q$ giving the number of characters of degree $q^e$ in that family for each $e\in \mZ_{\ge 0}$. The other families 
represent cases of AC-pairs for which the algorithm is unable to prove that the core is small: in each of these cases, the algorithm returns data showing how far 
it has been able to perform contractions.

In order to describe the output precisely, we will need certain technical concepts. We  say that a \emph{categorisation} is a list 
$O=(O_0,O_1,\ldots,O_r)$ where
\begin{enumerate}[(i)]
\item $O_0 = (f,U_1,\ldots,U_n)$ and
\begin{enumerate}[(a)]
\item $f\in \mZ[q,t]$ is a formal polynomial in two variables (it represents the numbers of characters of various degrees);
\item for $1\le i\le n$, $U_i=(Q_i,E_i,u_i,v_i,e_i)$ where $Q_i$ and $E_i$ are a set of parameters and a set of restrictions for those parameters as in the definition of
algebraic data above, and 
$u_i,v_i,e_i\in \mZ_{\ge 0}$ (these data represent collections of characters that the algorithm is unable to count); 
\end{enumerate}
\item for $1\le i\le r$, $O_i = (F_i,k_i,l_i,m_i)$ where $F_i$ is an assembly and $k_i,l_i,m_i\in \mZ_{\ge 0}$. (These data account for families 
of characters which the algorithm is unable to contract to a $1$-dimensional core.)
\end{enumerate}
Whenever a categorisation will be processed by the algorithm, the assemblies $F_i$ will be given by combinatorial data. The nature of these data will vary, but it 
will be clear what the data are in each case: for example, an assembly of the form $\Irr(\mbf A,z)$ can be described by the pair $(\mbf A,z)$.

If $(J,\chi)$ is an AC-pair defined over a finite field $\mF_q$, write $d(\chi)=d$ where $\chi(1)=q^d$ (cf.\ Theorem~\ref{powerq}). 
If $X$ is a collection of AC-pairs over a fixed finite field $\mF_q$, we shall encode the numbers of AC-pairs of various degrees in $X$ by the following polynomial:
$$
\gamma_X = \sum_{(J,\chi)\in X} t^{d(\chi)} \in \mZ[t].
$$

We shall call the categorisation $O$ \emph{correct} for an assembly $S$ if for each prime power $q$ there is a decomposition
$$
S(q)= S_0 (q) \sqcup S_1 (q) \sqcup \cdots \sqcup S_r (q)
$$
and, for $1\le i\le r$, there is a map $\Phi_i (q): S_i (q)\ra F_i (q)$ satisfying the following properties:
\begin{enumerate}[(i)]
\item All AC-pairs in $S_0 (q)$ have small cores, and
$$
\gamma_{S_0 (q)} = f(q,t) + \sum_{i=1}^n (q-1)^{u_i} q^{v_i} |\Cal V(Q_i,E_i, q)| t^{e_i}.
$$  
\item  $|\Phi_i(q)^{-1} (\{ (K,\phi) \})|=(q-1)^{k_i} q^{l_i}$ for all $(K,\phi) \in F_i (q)$.
\item Whenever $\Phi_i (q)$ sends $(J,\chi)\in S_i (q)$ to $(K,\phi)\in F_i (q)$, we have $\chi(1)=q^{m_i} \phi(1)$, and $(K,\phi)$ and $(J,\chi)$ have cores that are
 either isomorphic or are both small.
\end{enumerate}

Suppose $O^{(1)},\ldots,O^{(s)}$ are categorisations, with $O^{(i)}=(O_1^{(i)},\ldots,O_{r_i}^{(i)})$. Write 
$$
O^{(i)}_0 = (f^{(i)}, U^{(i)}_1,\ldots, U^{(i)}_{n_i}).
$$
We say that the \emph{aggregate} of $O^{(1)},\ldots,O^{(c)}$ is the 
categorisation 
$$
O = (O_0, O^{(1)}_1,\ldots,O_{r_1}^{(1)}, \ldots, O^{(s)}_1,\ldots,O^{(s)}_{r_s})
$$
where
$$
O_0 = (f^{(1)}+\cdots+f^{(s)}, U_1^{(1)}, \ldots, U^{(1)}_{n_1},\ldots, U^{(s)}_1,\ldots,U^{(s)}_{n_s}).
$$

If $O$ is a categorisation and the notation above is used for its structure then we shall refer to the expressions $f,q^{u_i}(q-1)^{v_i}$, $q^{k_j}(q-1)^{l_j}$ as 
\emph{character counts} of $O$ and to the expressions $f$, $t^{e_i}$, $t^{m_j}$ as \emph{character degrees} of $O$. 
Our recursive algorithm will rely on the following straightforward result. 

\begin{lem}\label{aggr} Let $S$ be an assembly and let $O$ be a correct categorisation for $S$, with the structure of $O$ denoted as above. Suppose $O^{(i)}$ is a 
categorisation for the assembly $F_i$ for $1\le i\le r$. 
For each $i$, let $\tilde{O}^{(i)}$ be the categorisation obtained from $O^{(i)}$ by multiplying all the character counts  by 
$(q-1)^{k_i}q^{l_i}$ and all the character degrees by $t^{m_i}$. Let $M$ be the aggregate of $(O_0), \tilde{O}^{(1)},\ldots,\tilde{O}^{(r)}$. Then
$M$ is a correct categorisation for $S$.  
\end{lem}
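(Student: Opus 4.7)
The plan is to unpack the definition of correctness directly and assemble an explicit decomposition for $S(q)$ witnessing that $M$ is correct. For each $i\in[1,r]$, write the given categorisation as $O^{(i)}=(O^{(i)}_0,O^{(i)}_1,\ldots,O^{(i)}_{r_i})$ with induced decomposition $F_i(q) = F_{i,0}(q)\sqcup F_{i,1}(q)\sqcup\cdots\sqcup F_{i,r_i}(q)$ and maps $\Psi_{i,j}(q)\colon F_{i,j}(q)\to F'_{i,j}(q)$ for $j\ge 1$, with associated constants $k_{i,j},l_{i,j},m_{i,j}$. The proposed decomposition of $S(q)$ is
$$
S(q)\;=\;\Bigl(S_0(q)\sqcup\bigsqcup_{i=1}^r\Phi_i(q)^{-1}\bigl(F_{i,0}(q)\bigr)\Bigr)\;\sqcup\;\bigsqcup_{\substack{1\le i\le r\\1\le j\le r_i}}\Phi_i(q)^{-1}\bigl(F_{i,j}(q)\bigr),
$$
with the natural composite maps $\Psi_{i,j}(q)\circ\Phi_i(q)$ attached to the non-zero parts.

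First I would verify property (i) for the bracketed part. Every element of $S_0(q)$ already has a small core by correctness of $O$. If $(K,\phi)\in F_{i,0}(q)$ has a small core and $(J,\chi)\in\Phi_i(q)^{-1}((K,\phi))$, then by property (iii) of $O$ their cores are either isomorphic or both small, so $(J,\chi)$ has a small core either way. For the polynomial identity, for a single $(K,\phi)\in F_i(q)$ with $\phi(1)=q^e$, the fibre contributes $(q-1)^{k_i}q^{l_i}t^{m_i+e}$ to $\gamma_{M_0(q)}$, whence
$$
\gamma_{\Phi_i(q)^{-1}(F_{i,0}(q))}\;=\;(q-1)^{k_i}q^{l_i}t^{m_i}\,\gamma_{F_{i,0}(q)}.
$$
Substituting the formula for $\gamma_{F_{i,0}(q)}$ granted by correctness of $O^{(i)}$ and comparing with the multiplier rule defining $\tilde O^{(i)}$ and the aggregate formula for $M_0$ gives exactly the polynomial prescribed by $M$.

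Next I would verify properties (ii) and (iii) for each pair $(i,j)$ with $j\ge 1$. Fibre sizes multiply: for $(L,\psi)\in F'_{i,j}(q)$,
$$
\bigl|(\Psi_{i,j}(q)\circ\Phi_i(q))^{-1}((L,\psi))\bigr|\;=\;(q-1)^{k_i+k_{i,j}}q^{l_i+l_{i,j}},
$$
matching the transformed index in $\tilde O^{(i)}$. The degree relation composes as $\chi(1)=q^{m_i}\phi(1)=q^{m_i+m_{i,j}}\psi(1)$. Finally, the relation ``cores are isomorphic or both small'' is transitive: if intermediate $(K,\phi)$ has a small core then by applying property (iii) of both $O$ and $O^{(i)}$ the outer pairs $(J,\chi)$ and $(L,\psi)$ both have small cores, while otherwise both intermediate core-isomorphisms exist and compose.

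There is no substantial obstacle; the whole argument is bookkeeping. The one point requiring care is tracking indices consistently through the fibre-size multipliers $(q-1)^{k_i}q^{l_i}$, the degree shifts $t^{m_i}$, and the concatenation of $U$-tuples in the aggregate, so as to see that the transformed $\tilde O^{(i)}$ matches precisely the contribution coming from $\Phi_i(q)^{-1}(F_{i,\cdot}(q))$.
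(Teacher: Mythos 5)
Your proposal is correct and follows essentially the same route as the paper: the paper also decomposes $S(q)$ into $S_0(q)$ together with the pullbacks under $\Phi_i(q)$ of the decompositions witnessing correctness of the $O^{(i)}$, and uses the composite maps $\Phi^{(i)}_j(q)\circ\Phi_i(q)$. You merely make explicit (the preimages, the multiplicativity of fibre sizes and degrees, and the transitivity of ``isomorphic or both small'') what the paper leaves as ``easy to check.''
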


\begin{proof} Let $S_0,S_1,\ldots,S_r$ and $\Phi_1,\ldots,\Phi_s$ be the assemblies and maps witnessing the correctness of $O$. For $1\le i\le r$, let 
$S^{(i)}_0,S^{(i)}_1,\ldots,S^{(i)}_{r_i}$ and $\Phi^{(i)}_1,\ldots,\Phi^{(i)}_{r_i}$ be the assemblies and maps witnessing the correctness of $O^{(i)}$. Let $\hat S$ be the 
assembly given by
$$
\hat S(q) = S_0 (q) \sqcup \bigsqcup_{i=1}^r S^{(i)}_0 (q).
$$
Then 
$$
S(q) = \hat S(q) \sqcup \bigsqcup_{i=1}^r \bigsqcup_{j=1}^{r_i} S^{(i)}_j (q).
$$
It is easy to check that the assemblies
$$
\hat S(q),\; S^{(i)}_j (q), \quad j=1,\ldots, r_i, \quad i=1,\ldots, r
$$
together with the maps
$$
\Phi^{(i)}_j (q) \circ \Phi_i (q), \quad j=1,\ldots, r_i, \quad i=1,\ldots, r
$$
witness the correctness of $M$. 
\end{proof}

The algorithm contains two main functions: \emph{General} and \emph{TypeB}. The function \emph{General} takes algebraic data $\mbf A$ as input
and returns a categorisation of $\Irr(\mbf A)$. Note that this specification can be satisfied rather trivially: 
the data $(O_0,O_1)$, with $O_0=(0)$ and $O_1=(\Irr(\mbf A),0,0,0)$, is a correct categorisation of 
$\Irr(\mbf A)$. In practice, \emph{General} produces much more informative categorisations, at least for algebras such as $T_n (q)$.

The function \emph{TypeB} (so named because a contraction of Type B is one of its main steps) takes a pair $(\mbf A,z)$ as input, 
where $\mbf A=(Q,E,B,R)$ is algebraic data and $z\in B$. Its output is a categorisation of $\Irr(\mbf A,z)$. 
 
The functions \emph{General} and \emph{TypeB} require the algebraic data $\mbf A=(Q,E,B,R)$ given as input to satisfy the following technical condition:
\begin{equation}\label{NZ}
 \text{for all } x,y,z\in B\!: \quad \text{if } a\in R(x,y,z) \text{ then } E \text{ contains the inequation } a \ne 0.
\end{equation}
(Here and in the sequel a statement such as $a\in R(x,y,z)$ automatically implies that $R(x,y,z)\ne 0$.)

Condition~\eqref{NZ} does not pose any substantial restriction: given any algebraic data, we can split it into cases according to whether 
relevant parameters are equal to zero or not; then we process each of those cases separately. This is done by means of the following routine. 

\medskip

\begin{samepage}
\begin{center} \textbf{Function} \emph{SplitIntoCases} \end{center}
\nopagebreak

\ni \emph{Input:} algebraic data $\mbf A=(Q,E,B,R)$.
\nopagebreak

\ni \emph{Output:} a list of algebraic data $\mbf A_1,\ldots,\mbf A_r$, each satisfying~\eqref{NZ}, such that 
$\Irr(\mbf A)(q)=\sqcup_{i=1}^r \Irr(\mbf A_i)(q)$ for all $q$.
\end{samepage}

If $\mbf A$ satisfies~\eqref{NZ} then just return $\mbf A$.
Otherwise, let $a\in Q$ be a parameter witnessing the failure of~\eqref{NZ} and proceed as follows:
\begin{enumerate}
\item Form data $\mbf A_1$ from $\mbf A$ by adding the inequation $a\ne 0$ to $E$.
\item Form data $\mbf A_2$ from $\mbf A$ by ``setting'' $a=0$. That is, whenever $a\in R(u,v,w)$ (for some $u,v,w\in B$), set $R(u,v,w)=0$. Remove all monomials involving 
$a$ from equations of $E$ and remove $a$ from $Q$. 
\item Run \emph{SplitIntoCases} recursively on $\mbf A_1$ and $\mbf A_2$, concatenate the outputs and \textbf{return} the resulting list.
\end{enumerate}
\bigskip

We now describe the main two functions, which call each other recursively. It will be clear from our description that the process always terminates.
Naturally, every \textbf{return} statement terminates the execution of the relevant function.
 Each such statement is accompanied by an explanation why the categorisation returned is correct for the input (where this is not obvious) as long as all recursive calls of 
\emph{General} and \emph{TypeB} return output that is correct for their input, as we may assume inductively.

\begin{samepage}
\medskip
\begin{center} \textbf{Function} \emph{General} \end{center}
\nopagebreak

\ni\emph{Input:} algebraic data $\mbf A=(Q,E,B,R)$ satisfying~\eqref{NZ}.
\nopagebreak

\ni\emph{Output:} a categorisation of $\mbf A$.
\end{samepage}

If $R(u,v,w)=0$ for all $u,v,w\in B$ then form the output $O=(O_0)$ as follows.
 Attempt to express $|\Cal V(Q,E,q)|$ as a polynomial in $f=f(q)\in \mZ[q]$. (We choose not to describe methods used to do this: these are diverse, but not sophisticated.)  
If the attempt is successful set $O_0=(f)$ (note that $f$ can be viewed as an element of $\mZ[q,t]$).
If not, set  $O=((0),U_1)$ with $U_1=(Q,E,0,0,0)$. \textbf{Return} $O$.

\smallskip
If $R$ is not identically zero then
\begin{enumerate}
\item Pick a vector $z\in B$ such that $R(u,z,v)=R(z,u,v)=0$ for all $u,v\in B$. For example, the last vector in the ordering of $B$ satisfies this.
 In general, there are several possibilities for $z$: 
we use an \emph{ad hoc} method of choosing $z$ to try to maximise the efficiency of the algorithm. We shall not describe this method.
\item Form algebraic data $\mbf A'$ obtained from $\mbf A$ by removing $z$ from $B$ (and restricting $R$ to $(B\sm \{z\})^3$).
\item\label{Gen1} Run \emph{General} on input $\mbf A'$, denoting the output by $O_1$.
\item\label{TB1} Run \emph{TypeB} on input $(\mbf A,z)$ and write $O_2$ for the output.
\item \textbf{Return} the aggregate $O$ of $O_1$ and $O_2$.
The categorisation $O$ is indeed correct for $\Irr(\mbf A)$: this is witnessed by the decomposition $\Irr(\mbf A)(q)=S_1(q)\sqcup S_2 (q)$, for each $q$, where
$$
S_1 (q)  =  \{ (J,\chi)\in \Irr(\mbf A)(q) \! : \; 1+\lan z\ran \subseteq \ker \chi \}
$$
and $S_2 (q)=\Irr(\mbf A,z)(q)$. 
\end{enumerate}

\bigskip

Function~\emph{TypeB} relies on two preliminary results. The first one is standard (see e.g.~\cite{Isaacs2007}, Section 3). Given a prime power $q$, we fix, 
once and for all, a non-trivial group character $\nu:(\mF_q,+)\ra \mC^{\times}$.

\begin{lem}\label{abchar} Let $U$ be a vector space over $\mF_q$. Then there is a bijection between the dual space $U^*$ and $\Irr(U)$, with $f\in U^*$ mapped to the 
character $x\mapsto \nu(f(x))$, $x\in U$. 
\end{lem}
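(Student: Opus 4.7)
The plan is to verify that the map $\Psi\!:U^*\to\Irr(U)$ defined by $f\mapsto \chi_f$, with $\chi_f(x)=\nu(f(x))$, is a well-defined bijection. First, for each $f\in U^*$ the function $\chi_f$ really lies in $\Irr(U)$: since $f$ is $\mF_q$-linear (hence additive) and $\nu$ is a group homomorphism, $\chi_f$ is a homomorphism from the abelian group $(U,+)$ to $\mC^\times$, and every irreducible character of a finite abelian group is of this form. Moreover $\Psi$ is itself a homomorphism from the additive group $U^*$ to $\Irr(U)$, since $\chi_{f+g}(x)=\nu(f(x)+g(x))=\chi_f(x)\chi_g(x)$.

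Next I would establish injectivity of $\Psi$. Suppose $\chi_f$ is the trivial character; then $f(U)\subseteq\ker\nu$. Now $f(U)$ is an $\mF_q$-subspace of $\mF_q$, so it is either $\{0\}$ or all of $\mF_q$. The second possibility would force $\ker\nu=\mF_q$, contradicting the non-triviality of $\nu$. Hence $f(U)=0$, i.e.\ $f=0$. This is the only step requiring a small idea, namely that the image of an $\mF_q$-linear functional is an $\mF_q$-subspace of the ground field and must therefore be everything or nothing.

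To conclude I would invoke the counting argument: $|U^*|=q^{\dim U}=|U|=|\Irr(U)|$, using that a finite abelian group has as many irreducible characters as elements. Since $\Psi$ is an injective map between finite sets of equal cardinality, it is a bijection, as required. There is no real obstacle here; the whole argument is routine once one exploits $\mF_q$-linearity in the injectivity step.
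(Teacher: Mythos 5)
Your proof is correct, and it is the standard argument; the paper in fact gives no proof of this lemma at all, simply citing it as well known (cf.\ \cite{Isaacs2007}, Section 3). The key step you isolate — that the image $f(U)$ is an $\mF_q$-subspace of $\mF_q$, hence $0$ or all of $\mF_q$, so non-triviality of $\nu$ forces $f=0$ — together with the count $|U^*|=|U|=|\Irr(U)|$ is exactly the usual way one sees that $f\mapsto\nu\circ f$ is a bijection, so there is nothing to add.
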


Note that, whenever an algebra $J$ is a direct sum of ideals $J_1$ and $J_2$, we have
$1+J= (1+J_1)\times (1+J_2)$, and so every character $\chi\in\Irr(1+J)$ is of the form $\chi=\chi_1\times \chi_2$, where 
$(\chi_1\times \chi_2) (g_1g_2)=\chi_1 (g_1)\chi_2(g_2)$, $g_1\in 1+J_1$, $g_2\in 1+J_2$. 

\begin{lem}\label{nodiff} Let $(M,\th)$ and $(R\oplus M, \tau\times\th)$ be AC-pairs with $\dim R=1$ and $RM=MR=R^2=0$. 
If $\th$ is non-trivial then these two pairs have isomorphic cores. 
\end{lem}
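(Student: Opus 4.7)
My plan is to show that $(M,\theta)$ and $(R\oplus M,\tau\times\theta)$ possess a common offspring; by Theorem~\ref{cores} their cores will then be isomorphic. The argument will proceed by induction on $\dim M$.

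Write $J=R\oplus M$ and $\chi=\tau\times\theta$, and fix a generator $r_0$ of $R$. Let $N(J):=\{z\in J:Jz=zJ=0\}$. The hypotheses $R^2=RM=MR=0$ force $R\subseteq N(J)$; similarly, the subspace $A:=\{a\in M:Ma=aM=0\}$ satisfies $A\subseteq N(J)$, and $A\ne 0$ because $M$ is nilpotent and nonzero (as $\theta$ is nontrivial). Consequently every $1$-dimensional subspace of $R\oplus A$ is automatically a two-sided ideal of $J$. I will split into two cases according to whether $\theta$ is trivial on $1+A$: since $1+A$ is central in $1+M$, Schur's lemma gives $\theta|_{1+A}=\theta(1)\lambda$ for a single linear character $\lambda$ of $1+A$.

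If $\lambda$ is nontrivial, I pick $z_0\in A$ with $\lambda(1+z_0)\ne 1$ and, by Lemma~\ref{abchar}, write $\tau(1+sr_0)=\nu(\alpha s)$ and $\lambda(1+tz_0)=\nu(\beta t)$ for scalars $\alpha\in\mF_q$ and $\beta\in\mF_q^{\times}$. Setting $I:=\lan \beta r_0-\alpha z_0\ran$, the identity $(1+t\beta r_0)(1-t\alpha z_0)=1+t(\beta r_0-\alpha z_0)$ (valid because $r_0z_0=z_0r_0=0$), together with the direct-product structure of $1+J=(1+R)\times(1+M)$, gives $\chi(1+x)=\theta(1)\nu(\alpha\beta t)\nu(-\alpha\beta t)=\chi(1)$ for every $x=t(\beta r_0-\alpha z_0)\in I$; hence $1+I\subseteq \ker\chi$. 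The condition $\beta\ne 0$ forces $I\cap M=0$, so the composite $M\hra J\thra J/I$ is an algebra isomorphism, and under this identification the Type A contraction $(J/I,\tilde{\chi})$ of $(J,\chi)$ is exactly $(M,\theta)$. Thus $(M,\theta)$ itself is a common offspring. Note that this covers the trivial-$\tau$ case ($\alpha=0$, giving $I=R$) as well.

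If $\lambda$ is trivial, then $1+A\subseteq\ker\theta$, so $A$ (already an ideal of $M$) is an ideal of $J$ contained in $\ker\chi$. Type A contraction by $A$ produces the offsprings $(M/A,\tilde{\theta})$ of $(M,\theta)$ and $(R\oplus M/A,\tau\times\tilde{\theta})$ of $(J,\chi)$, again of the form required by the lemma; since $\tilde{\theta}$ remains nontrivial and $\dim(M/A)<\dim M$, the inductive hypothesis supplies a common offspring of these two, completing the induction.

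The main obstacle is arranging the ideal $I$ in the first case: one must find a linear combination $\beta r_0-\alpha z_0$ whose span is simultaneously central enough to be an ideal of $J$, annihilated by $\chi$, and disjoint from $M$. The balance forced by $\alpha\beta-\alpha\beta=0$ in the character calculation is what makes the tilt work, and once $I$ is identified the rest (that $J/I$ identifies canonically with $M$ and that the deflated character on this $M$ is $\theta$) is routine verification.
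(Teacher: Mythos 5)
Your proof is correct, and it takes a genuinely different route from the paper's, even though the key ``tilt'' construction is shared. The paper first passes to the core $(D,\psi)$ of $(M,\th)$, using the (asserted, not proved) observation that contractions performed on the summand $M$ of $R\oplus M$ lift to contractions of the direct sum, so that $(R\oplus D,\tau\times\psi)$ is an offspring of $(R\oplus M,\tau\times\th)$; uncontractibility of the core then forces $\psi$ to be nontrivial on $1+Z$ for any annihilated line $Z\subseteq D$, and a single Type~A contraction along the line $Y\le R+Z$ killed by $\tau\times\lambda$ identifies $(D,\psi)$ itself as an offspring, hence the core, of the big pair. Your $I=\lan \b r_0-\a z_0\ran$ is exactly this kernel line written out in coordinates, so the tilt step is the same; what differs is that you never pass to the core and never need the summand-lifting claim. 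Instead you induct on $\dim M$, replacing the appeal to uncontractibility by the dichotomy on whether the central character $\lambda$ of $1+A$ is trivial: the nontrivial case is handled by the tilt (and your verifications that $I$ is an ideal annihilated by $\chi$, that $I\cap M=0$, and that $(J/I,\tilde\chi)\cong(M,\th)$ are all correct), while the trivial case is handled by a simultaneous Type~A contraction by $A$, which strictly reduces $\dim M$ and keeps $\tilde\th$ nontrivial, so the induction is well-founded. You then conclude via Theorem~\ref{cores}, which is proved earlier and independently, so there is no circularity. The paper's route buys brevity and a slightly sharper output (the core of $(M,\th)$ is exhibited directly as the core of $(R\oplus M,\tau\times\th)$); yours buys self-containedness, using only explicit Type~A contractions, Lemma~\ref{abchar} and the uniqueness of cores.
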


\begin{proof} 
Let $(D,\psi)$ be the core of $(M,\th)$. It is easy to check that then $(R\oplus D,\tau\times\psi)$ is an offspring of $(R\oplus M,\tau\times \th)$:
 contractions can be performed to one summand of a direct sum of algebras without affecting the other summand. 

It is clear that any offspring of a non-trivial character is non-trivial. As $\th$ is non-trivial, so is $\psi$. 
Let $Z$ be a $1$-dimensional subspace of $D$ such that $DZ=ZD=0$. 
Then $1+Z\nsubseteq\ker\psi$: else we could apply a contraction of Type A to $(D,\psi)$ to reduce the dimension still further. We have $\psi_{1+Z}=l \l$ where $l\in \mN$ and 
$\l$ is a non-trivial linear character of $1+Z$. 
As $(R+Z)^2=0$, the map $x\mapsto 1+x$ is an isomorphism between abelian groups $R+Z$ and $1+R+Z$. Thus, by Lemma~\ref{abchar},
there exists a $1$-dimensional subspace $Y$ of $R+Z$ such that $1+Y\subseteq \ker(\tau\times\l)$, and $Y\ne Z$. 
It follows that
$1+Y\subseteq\ker(\tau\times\psi)$.
Hence a contraction of Type A replaces $(R\oplus D,\tau\times\psi)$ with $((R\oplus D)/Y,\Defl_{1+Y}(\tau\times\psi))$. 
However, the map $x\mapsto x+Y$ is clearly an isomorphism between the AC-pairs $(D,\psi)$ and $((R\oplus D)/Y,\Defl_{1+Y}(\tau\times\psi))$. So $(D,\psi)$ is an offspring, and hence the core, of $(J,\chi)$. 
\end{proof}

\medskip

\begin{center} \textbf{Function} \emph{TypeB} \end{center}
\nopagebreak

\ni\emph{Input:} $(\mbf A,z)$ where $\mbf A=(Q,E,B,R)$ is algebraic data satisfying~\eqref{NZ} and $z\in B$. 
\nopagebreak

\ni\emph{Output:} a categorisation of $\Irr(\mbf A,z)$.

\smallskip
\ni\textbf{Step 1: look for a direct sum decomposition.}
Check whether there exist $x,v\in B$ such that $R(x,v,z)\ne 0$. 
If there are no such $x$ and $v$ then:
\begin{enumerate}
\item Form $\mbf A'$ from $\mbf A$ by removing $z$ from $B$ (and restricting $R$ accordingly).
\item Run \emph{General} on $\mbf A'$, denoting the output by $O'$.
\item Multiply all the character counts in $O'$ by $(q-1)$ and \textbf{return} the resulting categorisation $\mbf O$. 
\end{enumerate}
In the case referred to, there is an obvious one-to-one correspondence $J\mapsto J'$ from $\Cal J(\mbf A,q)$ onto $\Cal J(\mbf A',q)$ such that $J$ is a direct sum of ideals 
$\lan z\ran$ and $J'$. In this situation, by Lemma~\ref{nodiff}, if  $\chi\in\Irr(1+J)$ restricts to $\chi'\in\Irr(1+J')$ then $(J,\chi)$ and $(J',\chi')$ have cores that are either isomorphic or are both small. Thus, assuming $O'$ is a correct categorisation for $\Irr(\mbf A')$, $O$ is correct for $\Irr(\mbf A,z)$. 

\smallskip
\ni\textbf{Step 2: look for a Type B contraction.}
We search for an element $y\in B$ with the following properties:
\begin{enumerate}[(i)]
\item $R(x,y,w)=0$ for all $x,w\in B$  (that is, $Jy=0$ if $J$ is any algebra encoded by $\mbf A$);
\item for some $x\in B$, $R(x,y,z) \ne 0$;
\item $R(y,x,w)=0$ for all $w\ne z$ and all $x\in B$. (Thus, by~\eqref{NZ}, $yJ=\lan z\ran$ for any algebra $J$ in question.)
\end{enumerate}
If there is no such $y\in B$, go to Step 3. 

Suppose that $y\in B$ satisfying these conditions exists. Then $(\lan y\ran,\lan z\ran)$ is a good pair in any algebra $J$ encoded by $\mbf A$: indeed, as $Jy=0$ we have
$C_J (y) y = y C_J (y)=0$; as $yJ=\lan z\ran$ we have $C_J (y) \ne J$.
We now perform a Type B contraction replacing each algebra $J$ encoded by $\mbf A$ with $C_{J}(y)/\lan y\ran$. More precisely, we shall construct an algebraic family 
$\mbf A'=(Q',E',B',R')$ such that, for each $q$, there is a one-to-one correspondence between $J\in \Cal J(\mbf A,q)$ and $C\in\Cal J(\mbf A',q)$ given by
$C=C_J(y)/\lan y\ran$. Let 
$$
\{x_1,\ldots,x_k\}=\{x\in B\!: \; R(y,x,z)\ne 0 \},
$$
with the ordering of $x_1,\ldots,x_k$ inherited from $B$. 

For each $h\in \Cal V(Q,E,q)$, consider the corresponding algebras $J=J(\mbf A,h)$ and $C=C_J(y)/\lan y\ran$, 
and the elements $x'_1,\ldots,x'_{k-1}\in C$ given by 
$$
x'_i=\left( \prod_{a\in R(y,x_k,z)} h(a) \right) x_i- \left( \prod_{b\in R(y,x_i,z)} h(b) \right) x_k, \quad i=1,\ldots,i-1.
$$
Set $B^{-}=B\setminus \{x_1,\ldots,x_k,y\}$ and $B'=B^{-} \sqcup \{x'_1,\ldots,x'_{k-1}\}$, with the ordering on $B'$ defined by taking with the ordering on $B$, replacing 
 $x_i$ with $x'_i$ for $i=1,\ldots,k-1$ and removing $y$ and $x_k$.
Then $B'$ is a basis of $C$: clearly, $B'\subseteq C$ and $|B'|=\dim J-2=\dim C$; and it follows from~\eqref{NZ} that the set $B'$ is linearly independent. 
It remains to encode the structure constants of $C$ with respect to this basis. We abuse notation by ignoring the distinction between elements of 
$Q$ and their images in $\mF_q$ under a substitution $h:Q\ra \mF_q$. We write 
$c_i$ for $\prod_{a\in R(y,x_i,z)} a$, $i=1,\ldots,k$. 
Also, we use the following shorthand: if $v_1,v_2,v_3\in B$,
\begin{equation}\label{shorthand}
P(v_1,v_2,v_3) = 
\begin{cases}
0 & \text{if } R(v_1,v_2,v_3)=0 \\ 
\prod_{a\in R(v_1,v_2,v_3)} a & \text{otherwise}. \\
\end{cases}\
\end{equation}

 We have
$$
uv= \sum_{w\in B'} d_{uvw} w, \quad u,v \in B' 
$$
where $d_{uvw}$ are given by the following expressions:
$$
d_{uvw}=
\begin{cases}
P(u,v,w) & \text{if } u,v,w\in B^{-}, \\
c_k P(u,x_i,w) - c_i P(u,x_k,w) & \text{if } u,w\in B^{-}, v=x'_i, \\
c_k P(x_i,v,w) - c_i P(x_k,v,w) & \text{if } v,w\in B^{-}, u=x'_i,\\
c_k^2 P(x_i,x_j,w) - c_i c_k P(x_k,x_j,w) & \\
- c_j c_k P(x_i,x_k,w) +c_i c_j P(x_k,x_k,w) & \text{if } w\in B^{-}, u=x'_i, v=x'_j,\\
c_k^{-1} P(u,v,x_l) & \text{if } w=x'_l, u,v\in B^{-}, \\
P(u,x_i,x_l) & \text{if } v=x'_i, w=x'_l, u\in B^{-}, \\
P(x_i,v,x_l) & \text{if } u=x'_i, w=x'_l, v\in B^{-}, \\
c_k P(x_i,x_j,x_l) & \text{if } v=x'_i, v=x'_j, w=x'_l. \\
\end{cases}
$$
In all cases $i$, $j$ and $l$ are arbitrary numbers elements of $[1,k-1]$. Note that $P(u,x_k,x_l)=P(x_k,v,x_l)=0$ for all $u\in B$ because
$x_k$ comes after $x_l$ in the ordering of $B$; this makes the formulae in the last three cases simpler than they would otherwise have been.
Now add variables $d_{uvw}$ to
$Q$ and the polynomial equations above to $E$ as necessary. In the fifth case division by $c_k$ has to be encoded, so we add the equation 
\begin{equation}\label{div}
d_{uvx_l}c_k = P(u,v,x_l);
\end{equation}
recall that $c_k$ is the product of variables $a \in R(y,x_k,z)$ and that each $a$ is forced to be nonzero by an inequation from $E$ by~\eqref{NZ}.

In some cases it is not necessary to add new a new variable $d_{uvw}$. This happens if $d_{uvw}=0$ or if $d_{uvw}$ is just a product of variables from $P$ 
(in the latter case, it suffices to set $R'(u,v,w)$ to consist of the variables of which $d_{uvw}$ is the product). Also, if $R(y,x_k,z)\subseteq R(u,v,x_l)$
(with $u,v\in B^{-}$ and $1\le l\le k-1$), an equation~\eqref{div} is not necessary: it suffices to set $R'(u,v,x'_l)=R(u,v,x_l)\sm R(y,x_k,z)$. These simplifications make 
the resulting data $(P',E')$ easier to deal with in some cases. As $d_{uvw}=0$ unless $w$ comes after both $u$ and $v$ in the ordering of $B'$, the tuple $\mbf A'$ that we obtain is indeed algebraic data.

By Corollary~\ref{bijection}, whenever $J$ is an algebra encoded by $\mbf A$ and $C=C_J (y)/\lan y\ran$ is the corresponding algebra encoded by $\mbf A'$, Type B contractions yield a bijection $\Irr(J,\lan z\ran)\ra \Irr(C,\lan z\ran)$. Moreover, if $\chi\in \Irr(J,\lan z\ran)$ is mapped to $\th\in \Irr(C,\lan z\ran)$, then $\chi(1)=q\th(1)$. 
We conclude that $O=((0),O_1)$, with $O_1=(\Irr(\mbf A',z),0,0,1)$, is a correct categorisation for $\Irr(\mbf A,z)$. 

All that remains to do before we apply \emph{TypeB} to $\mbf A'$ recursively is make sure that the data satisfy~\eqref{NZ}.
Run~\emph{SplitIntoCases} on $\mbf A'$, and let $\mbf A'_1,\ldots,\mbf A'_r$ be the output. For each $i=1,\ldots,r$, run \emph{TypeB} on 
$(\mbf A'_i,z)$, $i=1,\ldots,r$, denoting the output by $O'_i$. Construct the aggregate $O'$ of $O'_1,\ldots,O'_r$. Multiply all the character degrees in $O'$ by $t$ and \textbf{return} the resulting categorisation, which is correct for $\Irr(\mbf A,z)$ by Lemma~\ref{aggr}.

\smallskip
\ni\textbf{Step 3: search for a Type A contraction.} 
Let 
$$
M= \{ v\in B\!: \; R(x,v,w)=R(v,x,w)=0 \text{ for all } x,w\in B \}.
$$
Thus, for each $q$ and each $J\in \Cal J(\mbf A,q)$, we have $M=\{v\in B\!:\,  Jv=vJ=0\}$.

Try to find $y\in B$ such that
\begin{enumerate}[(i)]
\item $R(x,y,w)=0$ for all $x,w\in B$; and
\item $R(y,x,w)=0$ for all $w\in B\sm M$ and $x\in B$.
\end{enumerate}
If no such $y$ exists, move to Step 4. Otherwise, set
$$
L= \{ w\in M \! : \; R(y,x,w) \ne 0 \text{ for some } x\in B \}.
$$
There may be more than one possible $y$: we choose $y$ so that $z\in L$ if possible and, subject to this, $|L|$ is as small as it can be. 
Write $\{w_1,\ldots,w_k\}= L\sm \{z\}$ and $U=\lan z,w_1,\ldots,w_k \ran$. 
Consider an algebra $J$ encoded by $\mbf A$ and a character $\chi\in \Irr(1+J,\lan z \ran)$. As $U^2=0$ it follows from Lemma~\ref{abchar} that 
there is a unique subspace $U'\le U$ of codimension $1$ such that
$1+U'\subseteq \ker\chi$. Since $z\notin U'$, there is a uniquely determined tuple $\mbf b=(b_1,\ldots,b_k)\in \mF_q^k$ such that
$$
U'= S(\mbf b) \! = \lan w_1-b_1 z, \ldots, w_k- b_k z \ran.
$$
Now allow $\chi$ to vary and write $H(\mbf b)= \{ \chi\in \Irr(1+J,\lan z\ran)\!:\, 1+S(\mbf b)\subseteq \ker \chi\}$. Then
$$
\Irr(1+J,\lan z\ran) = \bigsqcup_{\mbf b \in \mF_q^k} H(\mbf b)
$$
and, for each fixed tuple $\mbf b$, Type A contractions yield a one-to-one correspondence from $H(\mbf b)$ onto $\Irr(1+(J/S(\mbf b)), \lan z\ran)$. 

We shall construct algebraic data $\mbf A'$ 
that encode precisely the algebras $J/S(\mbf b)$ as $J$ runs through the algebras encoded by $\mbf A$ and $\mbf b$ runs 
through $\mF_q^k$. Then, by the argument above, $((0),(\Irr(\mbf A',z),0,0,0))$ 
will be a correct categorisation of $\Irr(\mbf A,z)$.

We shall use the basis 
$B'=(B\sm L) \sqcup \{z'\}$ of $J/S(\mbf b)$ where $z'=z$. The ordering on $B\sm L$ is inherited from $B$, and we put $z'$ after all the elements of $B\sm L$ in the ordering
of $B'$.
(As usual, we view each $u\in B\sm L$ as an element of $J/S(\mbf b)$ by identifying $u$ with $u+S(\mbf b)$.)
Using the same conventions as in Step 3, including shorthand~\eqref{shorthand}, we can express the products of basic elements in $J/S(\mbf b)$ as
$$
uv= \sum_{w\in B'} d_{uvw} w, \quad u,v\in B'
$$
where
$$
d_{uvw}=
\begin{cases}
P(u,v,w) & \text{if } w \ne z' \\
P(u,v,z)+\sum_{i=1}^k b_i P(u,v,w_i) & \text{if } w=z'.
\end{cases}
$$
Now form algebraic data $\mbf A'=(Q',E',B',R')$ by letting $B'$ be as above, adding formal parameters $b_1,\ldots,b_k$ to $Q$ to obtain $Q'$ 
and expressing the structure constants using the formulae given (adding new parameters and equations as necessary). 
Run \emph{SplitIntoCases} on $\mbf A'$, writing $\mbf A'_1,\ldots,\mbf A'_r$ for the output. 
Finally, run \emph{TypeB} recursively on each of $(\mbf A'_i,z)$, $i\in [1,r]$, and \textbf{return} the aggregate $O$ of the outputs. (Note that 
$L$ was defined so that there is a chance that Step 2 will work on these runs of \emph{TypeB}.) By Lemma~\ref{aggr}, $O$ is correct for
$\Irr(\mbf A,z)$.

\smallskip
\ni\textbf{Step 4: give up.} 
If this stage is reached, return the categorisation $O=((0),O_1)$ where $O_1=(\Irr(\mbf A,z),0,0,0)$.

\medskip
It is easy to construct algebraic data $\mbf A$ encoding the nilpotent triangular algebras $T_n (q)$ for all $q$. 
Then we could run \emph{General} on $\mbf A$. However, in certain cases such as that of $T_n (q)$ we can make the algorithm much more efficient by trying 
a special method of finding offspring and reverting to \emph{General} when that method fails. We describe this special approach, which is also useful for the 
purposes of illustration (cf.\ Section~\ref{examples}), in the next section.

\section{Pattern algebras}\label{spec_cases}

We now show how a number of contractions can be performed at once if we make an assumption concerning the structure of our algebra $J$.

\begin{hyp}\label{hyp31} Let $J$ be a finite-dimensional nilpotent algebra and suppose that as a vector spaces $J=K\oplus L$ where
\begin{enumerate}[(i)]
\item $K$ is an ideal of $J$ and $JK=0$;
\item $L$ is a subalgebra of $J$.
\end{enumerate}
\end{hyp}

\begin{remark} By reversing the order of multiplication, one can also apply the subsequent argument to the case when the condition $JK=0$ is replaced with $KJ=0$.
\end{remark}

As $1+K$ can be identified with the additive group $K$, the set $\Irr(1+K)$ is in one-to-one correspondence with $K^*$ by Lemma~\ref{abchar}.
The ideal $K$ is a right $L$-module, so we may view $K^*$ as a left $L$-module. 
Let $\phi$ be a linear character of $1+K$, and let $u\in K^*$ be the corresponding function. 
Let $M_{\phi}=M_u=\Ann_L (u)$ be the annihilator of $u$ in $L$. Then $\Stab_{1+L}(\phi)=1+M_u$. Indeed, as $LK=0$, 
an element $1+y\in 1+L$ fixes $\phi$ if and only if $\phi(1+x(1+y))=\phi(1+x)$ for all $x\in K$; this occurs if and only if $u(xy)=0$ for all $x\in K$, 
which is equivalent to $yu=0$.

In particular, $\Stab_{1+L}(\phi)$ is an algebra group.
 The linear character 
$\phi$ extends to a unique character $\hat\phi\in (1+K+M_{\phi})$ such that $1+M_{\phi}\subseteq \ker \hat\phi$. 
By Clifford theory, we have a bijection 
\begin{equation}\label{Fphi}
F_{\phi}: \Irr(1+M_{\phi}) \ra \Irr(1+J|\phi), \qquad \th\mapsto (\hat\phi \th)^{1+J}
\end{equation}
 (see, for example,~\cite{CRI}, Propositions 11.5 and 11.8). 

\begin{lem}\label{fast} Assume Hypothesis~\ref{hyp31}. Let $\phi\in\Irr(1+K)$ be non-trivial and let $M=M_{\phi}$. Suppose $\chi\in\Irr(1+J|\phi)$, and 
$\th=F_\phi^{-1}(\chi)$, so that 
$\th\in\Irr(1+M)$. Then $(J,\chi)$ has an offspring of the form $(R\oplus M,\tau\times\th)$ where $R$ is 1-dimensional, $RM=M R=R^2=0$ and $\tau$ is an irreducible character of $R$.
\end{lem}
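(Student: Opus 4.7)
The plan is to prove the lemma by induction on $s=\dim L-\dim M$, combining Clifford theory with one Type A contraction (clearing a subspace of $K$) and one Type B contraction (peeling off a dimension of $L/M$) per inductive step.

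For the base case $s=0$, i.e.\ $L=M$, the character $\phi$ is $(1+L)$-invariant and $\hat\phi$ extends uniquely to $\Irr(1+J)$ with $1+L\subseteq\ker\hat\phi$, so $\chi=\hat\phi\cdot\Infl_{1+K}(\theta)$. I would verify that $K_0=\ker u$ is an ideal of $J$: $JK_0\subseteq JK=0$, and for $k_0\in K_0$ and $m\in M=L$ one has $u(k_0m)=(mu)(k_0)=0$, so $K_0L\subseteq K_0$. Since $1+K_0\subseteq\ker\chi$, a Type A contraction by $K_0$ yields $(J/K_0,\tilde\chi)$. Since $J/K_0=R\oplus L$ as an algebra with $R\cdot L=L\cdot R=R^2=0$ (where $R=K/K_0$ is $1$-dimensional) and $\tilde\chi=\tau\times\theta$ with $\tau$ induced from $\hat\phi$, this is exactly the required offspring.

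For the inductive step $s\ge 1$, I would first apply a preliminary Type A contraction by
$$W=\{k\in K: u(k)=0 \text{ and } u(ky)=0 \text{ for all } y\in L\}=K_0\cap(Lu)^{\perp}.$$
I would verify that $W$ is an ideal of $J$ (using $JK=0$ and that $L\cdot(Lu)\subseteq Lu$ in $K^*$), and that $1+W\subseteq\ker\chi$: by Clifford theory, the irreducible constituents of $\chi_{1+K}$ are the $(1+L)$-conjugates of $\phi$, which correspond via Lemma~\ref{abchar} to the affine coset $u+Lu\subseteq K^*$, and every element of this coset annihilates $W$. After this Type A contraction we may assume $W=0$, whence $\dim K=1+s$ and there exists $z_0\in K$ with $u(z_0)\neq 0$ and $z_0\in(Lu)^{\perp}$ (that $u\notin Lu$ is forced by the nilpotency of $L$, since an equation $yu=u$ with $y\in L$ would iterate to $y^n u=u$ for all $n$, forcing $u=0$).

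Next I would perform one Type B contraction reducing $s$ by one. Choose $y\in L\setminus M$ carefully in its coset of $M$ (using nilpotency of $L$ to make $yL+Ly$ as contained in $M$ as possible) and use $(\lan z_0\ran,\lan y\ran)$ as a candidate good pair. Conditions (i)--(iv) of Definition~\ref{goodpair} reduce to: $z_0L=0$, $Ky+Ly\subseteq\lan z_0\ran$, the centraliser condition on $C_J(y)$, and $C_J(y)\ne J$. Where these do not hold automatically, I would perform further Type A contractions by ideals contained in $\ker\chi$, which one constructs from the offending stray terms and verifies to lie in $\ker\chi$ via the orbit description of $\chi|_{1+K}$ established above. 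Once the good pair is valid, the Type B contraction produces $(J_1,\chi_1)=(C_J(y)/\lan y\ran,\tilde\theta_1)$, which satisfies Hypothesis~\ref{hyp31} again, carries a Clifford correspondence $F_{\phi_1}$ over the restricted character $\phi_1=\phi|_{1+K_1}$ compatible with the original, and has $\dim L_1-\dim M_{\phi_1}<s$. The inductive hypothesis applied to $(J_1,\chi_1)$ yields $(R\oplus M,\tau\times\theta)$ as an offspring of $(J_1,\chi_1)$, and hence of $(J,\chi)$.

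The main obstacle is the construction of the good pair in the inductive step: the naive candidate $(\lan z_0\ran,\lan y\ran)$ does not automatically satisfy Definition~\ref{goodpair}, and making it valid requires both a careful choice of $y$ within its coset (exploiting the nilpotency of $L$) and auxiliary Type A contractions absorbing the bad pieces into $\ker\chi$. Verifying that the Clifford correspondence is preserved across each Type B step is also delicate but should follow from the uniqueness clauses of Lemma~\ref{adjustment}.
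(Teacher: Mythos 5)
Your base case and the preliminary Type~A contraction by $W=\{k\in K:\,u(k)=0 \text{ and } u(kL)=0\}$ are correct (including the observation that $u\notin Lu$ by nilpotency of $L$), but the heart of your induction --- the Type~B contraction employing a pair $(\lan z_0\ran,\lan y\ran)$ with $y\in L\sm M$ --- is not established, and as described it does not go through. Condition (ii) of Definition~\ref{goodpair} requires $Jy\subseteq \lan z_0\ran$ \emph{and} $yJ\subseteq\lan z_0\ran$; since $Ly$ and $yL$ lie in $L$ while $\lan z_0\ran\subseteq K$, this forces $Ly=yL=0$ (not merely $yL+Ly\subseteq M$, which is what your "careful choice in the coset" aims at and which is in any case not achievable in general), besides $Ky\subseteq\lan z_0\ran$. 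Condition (iii), which the paper itself flags as the key one, is not addressed at all. The "auxiliary Type~A contractions absorbing the bad pieces into $\ker\chi$" are generally not available: after your own reduction to $W=0$, every ideal of $J$ contained in $K$ with $1+I\subseteq\ker\chi$ is zero (each constituent of $\chi_{1+K}$ corresponds to a functional in $u+Lu$, and these have trivial common kernel once $W=0$), while the offending products $Ly,yL$ live in $L$, where there is no reason for $1+Ly$, etc., to lie in $\ker\chi$: on $1+M$ the values of $\chi$ are governed by the arbitrary character $\th$. Finally, even granting a valid good pair, your inductive hypothesis yields an offspring $(R\oplus M_{\phi_1},\tau\times\th_1)$ for the \emph{new} datum; to conclude you need $M_{\phi_1}\cong M$ and compatibility of the Clifford correspondences so that $\th_1$ corresponds to $\th$, and with $Y\subseteq L$ the new stabiliser is computed inside $C_L(y)/\lan y\ran$ and need not be isomorphic to $M$ --- you assert this compatibility but give no argument.

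The paper avoids all of these difficulties by inducting on $\dim K$ and choosing \emph{both} members of the good pair inside the abelian ideal $K$: take $Z\subseteq K$ one-dimensional with $ZJ=0$; if $1+Z\subseteq\ker\phi$ contract by $Z$ (Type~A); otherwise take a one-dimensional $Y\subseteq K$ with $YL\subseteq Z$, adjusted inside $Y+Z$ so that $1+Y\subseteq\ker\phi$. Then conditions (i)--(iii) are easy because $JY=0$, and since $1+M=\Stab_{1+L}(\phi)$ preserves $\ker\phi$ and hence $Y$, one gets $M\subseteq C_L(Y)$, so the same $M$ survives the contraction, and the uniqueness clause of Lemma~\ref{adjustment} shows that $\th$ is carried along unchanged through the Clifford correspondence. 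If you want to salvage your plan, replace the choice $Y\subseteq L$ by such a $Y\subseteq K$; note then that $\dim L-\dim M$ is not a usable induction parameter (Type~A steps do not decrease it), whereas $\dim K$ is.
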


\begin{proof} We argue by induction on $\dim K$. If $\dim K=1$ then $KL=0$ (as $J$ is nilpotent) and there is nothing to prove. Since $J$ is nilpotent, we can find a $1$-dimensional subspace $Z$ of $K$ such that $ZJ=0$. First, suppose that $1+Z\subseteq \ker\phi$, so $1+Z\subseteq \ker\chi$. Then a Type A contraction sends $(K+L,\chi)$ to 
$((K/Z)\oplus L, \tilde{\chi})$. Hence we obtain the result by applying the inductive hypothesis to the AC-pair $((K/Z)\oplus L,\tilde{\chi})$ 
and the character $\tilde{\phi}\in \Irr(1+K/Z)$ (note that $M=M_{\tilde{\phi}}$).

Thus we may assume that $1+Z\nsubseteq \ker\phi$. Let $Y$ be a $1$-dimensional subspace of $K$ such that $Y\ne Z$ and $YL\subseteq Z$. If in fact $YZ=0$ then we can apply the previous argument with $Z$ replaced by $Y$, so it remains to consider the case $YL=Z$. It follows from Lemma~\ref{abchar}, there is a unique $1$-dimensional subspace
$W$ of $Y+Z$ such that $1+W\subseteq \ker\phi$.
We may assume that $Y$ is this subspace: this can be achieved by replacing $Y$ with 
$\lan y+\a z \ran$, if necessary, where $y$ and $z$ are elements spanning $Y$ and $Z$ respectively and $\a\in \mF_q$ is the appropriate scalar. Observe that the action of the group $1+M=\Stab_{1+L}(\phi)$ preserves $\ker\phi$, and hence $Y$.
Thus, since $MY=0$, we have $YM=0$, whence $M\subseteq C_L (Y)$. The pair $(Z,Y)$ is good and the corresponding Type B contraction replaces $(J,\chi)$ with an AC-pair 
$(J', \om)$ where $J'=(K/Y)\oplus C_L(Y)$ (the direct sum is that of vector spaces).
Since $(\hat\phi\th)^{1+J}=\chi$ and the character $(\hat\phi\th)^{1+K\oplus C_L(Y)}$ is irreducible and contains $1+Y$ in its kernel (as $\hat\phi \th$ does), 
by uniqueness in Lemma~\ref{adjustment}, we have 
$$
(\hat\phi\th)^{1+K\oplus C_L(Y)}=\Infl_{1+Y}(\om).
$$ 
Therefore,
$F_{\tilde{\phi}} (\om)=\th$ (where $\tilde{\phi}$ is the deflation of $\phi$ to $1+K/Y$), 
and we obtain the result by applying the inductive hypothesis to $(J',\om)$ and $\tilde{\phi}$.   
\end{proof}

Throughout the rest of this section we shall assume that $J$ is an even more special kind of algebra: a pattern algebra (see~\cite{Isaacs2007}). 
Let $C$ be a finite set and $R\subseteq C\times C$ a strict partial order on $C$. The \emph{pattern algebra} associated to $(C,R)$ 
is the nilpotent algebra 
$$
T_{C,R}(q) = \{ a=(a_{ij})_{i,j\in C} \in M_{C,C}(q) \! :\; a_{ij}=0 \text{ for all } i,j\in C \text{ such that } (i,j)\notin R \}.
$$
The partially ordered set $(C,R)$ defines the assembly $\Irr(T_{C,R})$, where
$$
\Irr(T_{C,R})(q) = \{ (T_{C,R}(q), \chi) \!: \; \chi\in \Irr(1+T_{C,R}(q)) \}.
$$
We note that $T_n (q)$ is, of course a pattern algebra, corresponding to a totally ordered set with $n$ elements.

We shall construct a certain categorisation of $\Irr(T_{C,R})$. 
This is a modified version of the procedure described by Isaacs~\cite{Isaacs2007}, Section 3, and our exposition will follow a similar pattern. (In order 
to relate~\cite{Isaacs2007} to the present paper, one should reverse all partial orders and replace multiplication in every algebra with the opposite one.)
The only substantial change is that we use
the actions of certain automorphisms of $T_{C,R}(q)$ to reduce the number of cases and make the algorithm more efficient. (Without this improvement, 
the computation for $U_{13}(q)$ would likely take an unreasonable amount of time.)
We shall then combine this categorisation 
with the algorithm of Section~\ref{alg} to find a more ``deep'' categorisation of $\Irr(T_{C,R})$.

First, we need a few general results concerning the algebra $T_{C,R}(q)$.
If $E$ is a subset of $C$, we shall denote by $\Top(E,R)$ the set of maximal elements of $E$ with respect to $R$, and we shall write 
$$
\bar E^R = \{ c\in C \! : \; (c,d) \in R \text{ for some } d\in E \text{ or } c\in E \}.
$$
This is the downward closure of $E$.

Consider the space $\mC^{C}$ of column vectors with entries indexed by $C$, and write $e_c$ for the element of $\mC^C$ whose only non-zero coordinate 
is at $c\in C$ and is equal to $1$.
We will view $\mC^C$ as a left $T_{C,R}(q)$-module, with the usual matrix multiplication.
If $u=\sum_{c\in C}\l_c e_c\in \mC^C$, we define
$$
\Supp(u):= \{ c\in C \!:\; \l_c\ne 0\}
$$ 
to be the \emph{support} of $u$ (as in~\cite{Isaacs2007}).

The following result is effectively a (partial) restatement of~\cite{Isaacs2007}, Theorem 3.1, so we shall not give a proof (which is relatively straightforward).

\begin{lem}\label{orb} Let $R$ be a strict partial order on a finite set $C$. Suppose $u\in \mC^{C}$. Let $E=\Top(\Supp(u),R)$, and set
$$
X= \{ v\in \mC^{C} \!:\; \Supp(v) \subseteq \bar E^{R} \sm E \}.
$$
Then the orbit of $u$ under $1+T_{C,R}(q)$ is $u+X$. In particular, $|(1+T_{C,R}(q))u|=q^{|\bar E^R \sm E|}$.
\end{lem}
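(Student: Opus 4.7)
The plan is to prove this by analysing the additive orbit directly. Since $(1+x)u = u + xu$ for any $x \in T_{C,R}(q)$, the orbit of $u$ under $1+T_{C,R}(q)$ is exactly $u + T_{C,R}(q)\cdot u$. So the task reduces to showing $T_{C,R}(q)\cdot u = X$, with the cardinality assertion following because $X$ is evidently an $\mF_q$-subspace of $\mC^C$ of dimension $|\bar E^R \setminus E|$, and the map $T_{C,R}(q) \to X$, $x \mapsto xu$, is $\mF_q$-linear with image of size $|X|$ (so if it is surjective, then the orbit has size $q^{|\bar E^R \setminus E|}$; injectivity on a complement of the kernel is automatic).

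For the inclusion $T_{C,R}(q)\cdot u \subseteq X$, I would take $x \in T_{C,R}(q)$ and compute that the $c$-th coordinate of $xu$ is $\sum_{d} x_{cd}\lda_d$, which is potentially nonzero only when there exists $d \in \Supp(u)$ with $(c,d)\in R$. Using the definition of $E$ as the maximal elements of $\Supp(u)$, either $d\in E$, giving $c \in \bar E^R$ directly, or there is some $d'\in E$ with $(d,d')\in R$ and transitivity yields $(c,d')\in R$, so again $c\in \bar E^R$. Moreover $c\notin E$: otherwise $c\in \Supp(u)$ would be maximal there, contradicting $(c,d)\in R$ with $d\in \Supp(u)$. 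Thus $\Supp(xu)\subseteq \bar E^R\setminus E$.

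For the reverse inclusion $X\subseteq T_{C,R}(q)\cdot u$, given $v = \sum_c \mu_c e_c \in X$, I would construct an explicit preimage. For each $c\in \Supp(v)\subseteq \bar E^R\setminus E$, pick some $d(c)\in E$ with $(c,d(c))\in R$ (such $d(c)$ exists by the definition of $\bar E^R$, combined with $c\notin E$). Define $x\in T_{C,R}(q)$ by setting $x_{c,d(c)} = \mu_c/\lda_{d(c)}$ (well-defined since $d(c)\in E\subseteq \Supp(u)$ so $\lda_{d(c)}\ne 0$) and all other entries zero. Then $(xu)_c = \mu_c$ for each $c\in \Supp(v)$, and $(xu)_c = 0$ otherwise (trivially by construction of $x$), giving $xu = v$.

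The only place where care is needed is the first inclusion: one must use the maximality in $E$ together with transitivity of $R$ to handle the case $d\notin E$, and separately rule out $c\in E$. Everything else is essentially bookkeeping, and the cardinality count is immediate from the surjection $T_{C,R}(q)\twoheadrightarrow X$ identifying $X$ with the $\mF_q$-vector space it visibly is.
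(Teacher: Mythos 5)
Your proof is correct. The paper itself gives no proof of this lemma --- it remarks that the statement is essentially a restatement of Theorem 3.1 of \cite{Isaacs2007} and calls the argument straightforward --- and your direct computation is exactly that straightforward argument: writing the orbit as $u+T_{C,R}(q)u$, proving $T_{C,R}(q)u\subseteq X$ via maximality of $E$ in $\Supp(u)$ plus transitivity of $R$ (with the standard finite-poset fact that every element of $\Supp(u)$ lies below some maximal one), and proving $X\subseteq T_{C,R}(q)u$ by the explicit preimage with one nonzero entry per row of the support of $v$. The only cosmetic remark is that the count $q^{|\bar E^R\sm E|}$ implicitly treats the coordinates as lying in $\mF_q$ rather than $\mC$; this is a notational slip already present in the paper's statement (cf.\ its later identification of this space with $K^*$), not a gap in your argument.
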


Now suppose a total order $<$ on $C$ is given and $<$ contains $R$. 
We define the \emph{normal closure} of $R$ in $<$ to be the relation
$$
\bar{R} = \{ (k,l) \in C\times C \! : \; k<l \text{ and } R\cup {(k,l)} \text{ is a transitive relation} \}.
$$
It is easy to check that $\bar R$ is transitive. 

\begin{lem}\label{normal_cl} Let $R$ be a strict partial order on a finite set $C$. Then for every prime power $q$ the algebra group $1+T_{C,\bar R} (q)$ normalises 
$T_{C,R}(q)$; that is, for all $g\in 1+T_{C,\bar R} (q)$ we have $g^{-1} T_{C,R}(q) g=T_{C,R}(q)$.
\end{lem}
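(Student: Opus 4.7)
The plan is to show that $T_{C,R}(q)$ is a two-sided ideal of the algebra $T_{C,\bar R}(q)$, and then deduce the normalisation statement by a standard argument about conjugation by a unit of the form $1+y$.

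First I would verify that $T_{C,R}(q)\cdot T_{C,\bar R}(q)\subseteq T_{C,R}(q)$. Take $x\in T_{C,R}(q)$, $y\in T_{C,\bar R}(q)$, and suppose that $(xy)_{ij}=\sum_k x_{ik}y_{kj}$ is non-zero. Then there exists $k\in C$ with $(i,k)\in R$ and $(k,j)\in \bar R$. By the definition of $\bar R$, the relation $R\cup\{(k,j)\}$ is transitive, so $(i,j)\in R\cup\{(k,j)\}$. Since $(i,k)\in R$ forces $i\ne k$, we have $(i,j)\ne (k,j)$, hence $(i,j)\in R$. The symmetric argument, in which the added pair is $(i,k)\in\bar R$ and one uses $(k,j)\in R$, shows $T_{C,\bar R}(q)\cdot T_{C,R}(q)\subseteq T_{C,R}(q)$. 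This is the key combinatorial step, and indeed the only place where the definition of $\bar R$ enters; I expect it to be the main (though still quick) obstacle.

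Next I would observe that the subalgebra $A=\mF_q\cdot 1+T_{C,\bar R}(q)$ of $M_{C,C}(q)$ therefore contains $T_{C,R}(q)$ as a two-sided ideal. For $g=1+y$ with $y\in T_{C,\bar R}(q)$, the element $g$ is a unit of $A$: since $y$ is nilpotent, $g^{-1}=1-y+y^2-\cdots$ is a finite sum and lies in $1+T_{C,\bar R}(q)\subseteq A$. Standard ideal theory then gives $g^{-1}T_{C,R}(q)g\subseteq T_{C,R}(q)$; concretely, for $x\in T_{C,R}(q)$ we have $xg=x+xy\in T_{C,R}(q)$ by the first containment, and then each summand of $g^{-1}(xg)=\sum_{k\ge 0}(-y)^k(xg)$ lies in $T_{C,R}(q)$ by the second containment.

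Finally, applying the same reasoning to $g^{-1}\in 1+T_{C,\bar R}(q)$ yields the reverse inclusion $gT_{C,R}(q)g^{-1}\subseteq T_{C,R}(q)$, and combining the two gives $g^{-1}T_{C,R}(q)g=T_{C,R}(q)$, as required.
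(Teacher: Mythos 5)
Your argument is correct, and it is organised differently from the paper's proof. You establish that $T_{C,R}(q)$ absorbs multiplication by $T_{C,\bar R}(q)$ on both sides (so it is a two-sided ideal of the unital algebra $\mF_q\cdot 1+T_{C,\bar R}(q)$) and then treat conjugation by an arbitrary $g=1+y$ via the finite geometric series $g^{-1}=\sum_{k\ge 0}(-y)^k$, finally applying the same reasoning to $g^{-1}$ for the reverse inclusion. The paper instead first reduces to the elementary generators $x_{kl}(\alpha)$, $(k,l)\in\bar R$, of $1+T_{C,\bar R}(q)$ (whose inverses are again elementary) and checks the $(m,n)$ entries of $ag$ and $g^{-1}ag$ directly. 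The combinatorial core is the same in both proofs: your implication ``$(i,k)\in R$ and $(k,j)\in\bar R$ imply $(i,j)\in R$'', together with its mirror image, is precisely the contrapositive of the paper's observation that for $(m,n)\notin R$ one has $n\ne l$ or $(m,k)\notin R$ (respectively $m\ne k$ or $(l,n)\notin R$). What the two packagings buy: your route avoids invoking the fact that $1+T_{C,S}(q)$ is generated by elementary matrices, while the paper's route avoids the series manipulation and never needs $T_{C,\bar R}(q)$ to be multiplicatively closed beyond the standing remark that $\bar R$ is transitive; you do need that transitivity (stated just before the lemma) to know $g^{-1}\in 1+T_{C,\bar R}(q)$, and your ideal-theoretic phrasing tacitly uses $R\subseteq\bar R$ so that $T_{C,R}(q)$ sits inside $A$ --- both points are easy, and in any case your explicit summand-by-summand computation is self-contained without them.
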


\begin{proof}
For any $i,j\in C$ and $\a \in \mF_q$, let $x_{ij}(\a)=1+\a e_{ij}$, where $e_{ij}$ is the elementary matrix with the $(i,j)$ entry equal to $1$ and the other entries equal to $0$. It is easy to check that, for any partial order $S$ on $C$, the group $1+T_{C,S}(q)$ is generated by the matrices
$x_{ij}(\a)$, $(i,j)\in S$, $\a\in \mF_q$. Thus it suffices to prove that $g^{-1} T_{C,R}(q) g = T_{C,R}(q)$ whenever $g=x_{kl}(\a)$ for some $(k,l)\in \bar R$, $\a\in \mF_q$. Consider such an element $g=x_{kl}(\a)$; note that $g^{-1}=x_{kl}(-\a)$. Let $a=(a_{ij})_{i,j\in C}\in T_{C,R}(q)$, and suppose $m,n \in C$ and $(m,n)\notin R$. Then, by definition of $\bar R$, 
either $n\ne l$ or $(m,k)\notin R$, whence the $(m,n)$ entry of $ag$ is $0$. Hence $ag\in T_{C,R}(q)$.
Moreover, either $m\ne k$ or $(l,n)\notin R$, so the $(m,n)$ entry of $g^{-1} a g$ is also $0$. Therefore, $g^{-1}ag\in T_{C,R}(q)$, and the result follows.
\end{proof}

 Choose a minimal element $c_0$ of $C$ and set 
\begin{eqnarray}
K & = & \{ a\in T_{C,R}(q) \!: \; a_{ij} =0 \text{ for all } i,j\in C \text{ such that } i\ne c_0 \}, \label{eqK} \\
L & = & \{ a\in T_{C,R}(q) \! : \; a_{c_0 j} =0 \text{ for all } j\in C \}. \label{eqL}
\end{eqnarray}
It is easy to check that $K$ and $L$ satisfy Hypothesis~\ref{hyp31}. Our aim is to find a set of representatives of $(1+L)$-orbits $\Irr(1+K)$, and to describe 
the algebra $M_{\phi}$ for each representative $\phi$: then we can find offsprings of characters of $1+T_{C,R}(q)$ using Lemma~\ref{fast} and the discussion before it.

Let $B=C\sm \{c_0\}$ and $P=R\cap (B\times B)$. In the sequel, we shall identify $L$ and $T_{B,P}(q)$ via the obvious isomorphism. 

Consider the space $\mC^D$ of column vectors and write $\{e_d: d\in D\}$ for the standard basis of $\mC^D$.
The projection $T_{B,P}(q) \thra T_{D,R'}(q)$ is an algebra homomorphism (as $a_{dc}=0$ for all $a\in T_{C,R}(q)$, $d\in D$, $c\in C\sm D$), which allows
us to view $\mC^D$ as a left $T_{B,P}(q)$-module.
Moreover, it is easy to see that 
sending $\{e_d:d\in D\}$ to the dual basis of $\{e_{c_0 d}: d\in D \}$ (see the notation in the proof of Lemma~\ref{normal_cl}) yields an isomorphism 
between the left $T_{B,P}(q)$-modules $\mC^D$ and $K^*$. Thus we may identify $\mC^D$ with $\Irr(1+K)$ by composing this isomorphism with that given by Lemma~\ref{abchar}.
In keeping with our previous notation, we shall write 
$$
M_u = \Ann_L (u) = \Ann_{T_{B,P}(q)} (u)
$$
for $u\in \mC^D$.

Let $E$ be any subset of $D$. Set $u_E=\sum_{d\in E} e_d$ and 
$$
M_E (q) = M_{C,R,c_0,E} (q) = M_{u_E}=\Ann_L (u_E).
$$
If $\phi\in \Irr(1+K)$  is the character corresponding to $u_E$, then $\Stab_{1+L}(\phi)=1+M_E(q)$ by the argument preceding Lemma~\ref{fast}. We define the assembly
$\Irr(M_E)=\Irr(M_{C,R,c_0,E})$ as follows:
$$
\Irr(M_{C,R,c_0,E})(q)  =  \{ ( M_E(q), \chi) \! : \; \chi \in \Irr(1+M_E(q)) \}. 
$$

\begin{prop}\label{pr4} Let $R$ be a strict partial order on a finite set $C$. Let $c_0$ be a minimal element of $C$. 
Set $D=\{d\in C \!: \, (c,d)\in R\}$ and $R'=R\cap (D\times D)$. Choose a total order $<$ on $B=C\sm \{c_0\}$ containing $P=R\cap (B\times B)$. Let 
$\bar P$ be the normal closure of $P$ with respect to $<$ and write $\bar P'=\bar P \cap (D\times D)$.
Suppose that $E_1,\ldots,E_n$ is an enumeration of all the anti-chains in $D$ with respect to $\bar P'$. For $i=1,\ldots, n$ set 
$$
O_i = (\Irr(M_{C,R,c_0,E_i}), |E_i|, |\bar E_i^{\bar P'}\sm \bar E_i^{R'}|, |\bar E_i^{R'}\sm E_i|).
$$
Then $((0), O_1,\ldots,O_n)$ is a correct categorisation for $\Irr(T_{C,R})$.
\end{prop}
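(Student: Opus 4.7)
The plan is to apply Clifford theory in the setup of Hypothesis~\ref{hyp31} and then use the normalising action afforded by Lemma~\ref{normal_cl} to coarsen the resulting parametrisation. With $K$ and $L$ as in \eqref{eqK} and \eqref{eqL}, so that $L \cong T_{B,P}(q)$ and Hypothesis~\ref{hyp31} holds, identify $\Irr(1+K)$ with $\mC^D$ via Lemma~\ref{abchar}. Transitivity of $R$ ensures $\mC^D \subseteq \mC^B$ is $1+L$-stable, so Lemma~\ref{orb} applied to $(B,P)$ indexes the $1+L$-orbits on $\mC^D$ by pairs $(E_L,\lambda_L)$ with $E_L$ an antichain in $(D,R')$ and $\lambda_L:E_L \to \mF_q^\times$, each orbit of size $q^{|\bar E_L^{R'} \setminus E_L|}$. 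Combined with the Clifford bijection \eqref{Fphi}, this yields a preliminary decomposition in which the $(E_L,\lambda_L)$-stratum is identified with $\Irr(1+M_{E_L}(q))$ (the stabiliser $\Ann_L(\lambda_L \cdot u_{E_L})$ being isomorphic to $M_{E_L}(q)$ via conjugation by a diagonal matrix), with degree multiplier $q^{|\bar E_L^{R'} \setminus E_L|}$.

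Next I would enlarge the acting group. Extending the total order on $B$ by placing $c_0$ minimally, one checks that the normal closure $\bar R$ of $R$ satisfies $\bar R \cap (B\times B) = \bar P$, and that the embedding of $T_{B,\bar P}(q)$ into $T_{C,\bar R}(q)$ by zeros in row and column $c_0$ yields, by Lemma~\ref{normal_cl}, a group $1+T_{B,\bar P}(q)$ normalising $T_{C,R}(q)$ as well as $K$ and $L$ (the latter by a direct matrix computation using the zero $c_0$-column). Conjugation by elements of $1+T_{B,\bar P}(q)$ therefore induces algebra automorphisms of $J=T_{C,R}(q)$ sending each AC-pair to an isomorphic one. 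Applying Lemma~\ref{orb} to $(D,\bar P')$ (a strict partial order, by transitivity of $\bar P$) indexes the $1+T_{B,\bar P}(q)$-orbits on $\mC^D$ by pairs $(E,\lambda)$ with $E$ an antichain in $(D,\bar P')$, and for each such $E$ I define $S_E(q) \subseteq \Irr(T_{C,R})(q)$ to consist of those $\chi$ whose Clifford datum $\phi$ sits in a $1+T_{B,\bar P}(q)$-orbit of $\bar P'$-top $E$ (with $E=\emptyset$ collecting the characters trivial on $1+K$); these partition $\Irr(T_{C,R})(q)$. After conjugating $\phi$ into canonical form $\lambda \cdot u_E$, Clifford produces the required map $\Phi_E(q):S_E(q) \to \Irr(M_E)(q)$.

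To verify the correctness conditions: the degree identity $\chi(1) = q^{|\bar E^{R'} \setminus E|} \chi'(1)$ holds provided one shows $|\bar E_L^{R'} \setminus E_L| = |\bar E^{R'} \setminus E|$ for every $R'$-antichain $E_L$ with $\bar P'$-top $E$, which follows by writing $E_L = E \sqcup T$ and using the defining transitivity property of $\bar P'$ to show $\bar T^{R'} \setminus T \subseteq \bar E^{R'} \setminus E$. Isomorphism of cores (or both being small) follows from Lemma~\ref{fast} combined with Lemma~\ref{nodiff} when $E \ne \emptyset$, and from a single Type~A contraction deflating $1+K$ when $E = \emptyset$.

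The main obstacle is condition~(ii), the fibre count $(q-1)^{|E|} q^{|\bar E^{\bar P'} \setminus \bar E^{R'}|}$. Pulling back along $\Phi_E$, the fibre over $\chi' \in \Irr(M_E(q))$ has one contribution from each pair $(E_L,\lambda_L)$ with $\Top(E_L,\bar P')=E$, so equals $\sum_{E_L}(q-1)^{|E_L|}$. Writing $E_L = E \sqcup T$ for $T \subseteq \bar E^{\bar P'} \setminus \bar E^{R'}$ (using that $T$ cannot meet $\bar E^{R'}$ since $E_L$ is an antichain), the claim reduces to showing that every subset $T$ of $\bar E^{\bar P'} \setminus \bar E^{R'}$ is automatically an $R'$-antichain and $R'$-incomparable to $E$. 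Both follow by applying the defining transitivity of $\bar P'$: for any $(y,e) \in \bar P' \setminus R'$ with $y \notin E$, any $(x,y) \in R'$ forces $(x,e) \in R'$, so $x \in \bar E^{R'}$, contradicting $x \in T$. The sum then becomes $(q-1)^{|E|} \sum_{T \subseteq \bar E^{\bar P'} \setminus \bar E^{R'}} (q-1)^{|T|} = (q-1)^{|E|} q^{|\bar E^{\bar P'} \setminus \bar E^{R'}|}$ by the binomial theorem, as required.
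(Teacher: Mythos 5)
Your overall strategy is the same as the paper's (Clifford theory with respect to $1+K$, strata indexed by the $\bar P'$-antichains $E$, maps to $\Irr(M_{E})$ built from the correspondence \eqref{Fphi}, cores handled by Lemmas~\ref{fast} and~\ref{nodiff}), and your combinatorial treatment of the fibre count and of the degree multiplier is sound and in fact a little slicker than the paper's: you count $1+L$-orbits in a stratum by summing $(q-1)^{|E_L|}$ over the $R'$-antichains $E_L=E\sqcup T$ and collapse the sum by the binomial theorem, and you prove $\bar E_L^{R'}\sm E_L=\bar E^{R'}\sm E$ directly, whereas the paper exhibits an explicit transversal $\Omega_i(q)$ and proves the degree formula by conjugating $u$ to canonical position (Lemma~\ref{lempr4}). (Two small slips there: $\mC^D$ is not a $1+L$-submodule of $\mC^B$ but a quotient, the action coming from the projection $T_{B,P}(q)\thra T_{D,R'}(q)$, so Lemma~\ref{orb} must be applied to $(D,R')$, which is what your stated conclusion actually uses; and in checking that $T$ is $R'$-incomparable to $E$ the case $e<_{R'}t$ with $e\in E$ must be ruled out by contradicting the antichain property of $E$, not by ``$x\in T$''.)

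The genuine gap is in your second paragraph. Both claims you make there are false in general: it is not true that $\bar R\cap(B\times B)=\bar P$, and it is not true that the copy of $1+T_{B,\bar P}(q)$ embedded with zero row and column at $c_0$ normalises $T_{C,R}(q)$. For a counterexample to both, take $C=\{c_0,k,l\}$ with $R=\{(c_0,k)\}$ and $k<l$; then $(k,l)\in\bar P$ but $R\cup\{(k,l)\}$ is not transitive, and conjugation by $1+\a e_{kl}$ sends $e_{c_0k}$ to $e_{c_0k}+\a e_{c_0l}\notin T_{C,R}(q)$. Since your identification of the stabilisers in a stratum with the single algebra $M_E(q)$ (and hence your map $\Phi_E$ and the core comparison in condition (iii)) rests on conjugating the whole AC-pair by such elements, this step fails as written. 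The needed conclusion is true and can be recovered in two ways: the paper's way, which never conjugates $(J,\chi)$ at all — Lemma~\ref{normal_cl} applied to the poset $(B,P)$ shows only that $1+T_{B,\bar P}(q)$ normalises $L=T_{B,P}(q)$, which already gives $M_u=\Ann_L(u)\cong M_{E}(q)$ (Lemma~\ref{lempr4}), and one then transports characters through a chosen isomorphism $\sigma_\phi\colon M_\phi\ra M_E(q)$; or by shrinking your conjugating group to $1+T_{D,\bar P'}(q)$, which suffices by Lemma~\ref{orb} applied to $(D,\bar P')$ and which does normalise $T_{C,R}(q)$, $K$ and $L$ — but that is a separate matrix verification (using that for $(y,e)\in\bar P'$ one has $e\in D$, so $(c_0,e)\in R$), not an instance of Lemma~\ref{normal_cl}, and it must be supplied.
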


First, we need the following lemma.

\begin{lem}\label{lempr4} With the notation of Proposition~\ref{pr4}, let $u\in \mC^D$ and $E=\Top(\Supp(u),\bar P')$. 
Then $M_u \cong M_E(q)$ and
$$
|L:M_u| = q^{|\bar E^{R'}\sm E|}.
$$
\end{lem}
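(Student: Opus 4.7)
The plan is to establish the isomorphism $M_u \cong M_E(q)$ first, by exhibiting an element that conjugates one into the other, and then deduce the index equality by applying Lemma~\ref{orb} directly to $u_E$. First I will observe that the left $L$-action on $\mC^D$ factors through the projection $\pi\colon T_{B,P}(q)\thra T_{D,R'}(q)$, whence $M_u=\pi^{-1}(\Ann_{T_{D,R'}(q)}(u))$ and $M_E(q)=\pi^{-1}(\Ann_{T_{D,R'}(q)}(u_E))$. This reduces the isomorphism question to understanding annihilators in the smaller algebra $T_{D,R'}(q)$.

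Next I will verify that $\bar P'$ is contained in the normal closure of $R'$ with respect to $<|_D$: for $(k,l)\in\bar P'$, transitivity of $P\cup\{(k,l)\}$ implies transitivity of $R'\cup\{(k,l)\}$, since each required composition takes place inside $(P\cup\{(k,l)\})\cap D^2$ and $P\cap D^2=R'$. By Lemma~\ref{normal_cl} this means $1+T_{D,\bar P'}(q)$ normalises $T_{D,R'}(q)$. Applying Lemma~\ref{orb} with the partial order $\bar P'$ on $D$, the orbit of $u$ under $1+T_{D,\bar P'}(q)$ equals $u+\{v:\Supp(v)\subseteq \bar E^{\bar P'}\sm E\}$; and since $\Supp(u)\subseteq \bar E^{\bar P'}$, this orbit contains $u':=\sum_{e\in E}u_e\, e_e$. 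Pick $g\in 1+T_{D,\bar P'}(q)$ with $gu=u'$, and let $t\in GL_D(q)$ be the invertible diagonal matrix with $t_d=u_d$ for $d\in E$ and $t_d=1$ for $d\in D\sm E$, so that $u'=t u_E$. Then $h:=g^{-1}t$ satisfies $h u_E=u$.

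I will then lift the conjugation by $h$ from $T_{D,R'}(q)$ to $L$. Viewing $g^{-1}$ and $t$ as elements of $GL_B(q)$ (extending $t$ by $1$'s on $B\sm D$), both normalise $L$: the element $g^{-1}$ lies in $1+T_{B,\bar P}(q)$ and so normalises $L$ by Lemma~\ref{normal_cl}, while a diagonal matrix normalises any pattern algebra since it scales entries within their original positions. A direct matrix computation shows that $\pi(hlh^{-1})=h\pi(l)h^{-1}$ for every $l\in L$ (where on the right $h$ is identified with its non-trivial block on $D$), so conjugation by $h$ preserves $\ker\pi$ and descends through $\pi$ to conjugation by $h$ on $T_{D,R'}(q)$. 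Combined with the identity $h\,\Ann_{T_{D,R'}(q)}(u_E)\,h^{-1}=\Ann_{T_{D,R'}(q)}(u)$ (which follows from $hu_E=u$), this yields $hM_E(q)h^{-1}=M_u$, so the two are isomorphic as algebras. The same conjugacy gives $|L:M_u|=|L:M_E(q)|$, and then applying Lemma~\ref{orb} to $u_E$ with $(C,R)=(D,R')$ yields $|L:M_E(q)|=q^{|\bar E^{R'}\sm E|}$ because $E$ is an antichain in $\bar P'\supseteq R'$, so $\Top(E,R')=E$. The main obstacle, as I see it, is the bookkeeping for the descent of the conjugation: the element $h$ lives in $GL_B(q)$ rather than in $1+J$, and one must carefully verify that it both normalises $L$ and intertwines $\pi$ with the conjugation action on $T_{D,R'}(q)$, which is where the inclusion $\bar P'\subseteq$ (normal closure of $R'$ in $<|_D$) is used.
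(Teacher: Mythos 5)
Your proof is correct and takes essentially the same route as the paper's: conjugate $u$ to $u_E$ by a unipotent element of the normal-closure pattern group together with an invertible diagonal matrix, use Lemma~\ref{normal_cl} to see that this conjugation preserves $L$ (so $M_u\cong M_E(q)$ and the indices agree), and then obtain $|L:M_E(q)|=q^{|\bar E^{R'}\sm E|}$ from Lemma~\ref{orb} applied to the poset $(D,R')$. The only difference is organisational: you carry out the reduction inside $T_{D,\bar P'}(q)$ (checking that $\bar P'$ lies in the normal closure of $R'$) and then lift through the projection $\pi$, whereas the paper works with $g\in 1+T_{B,\bar P}(q)$ and a diagonal matrix directly at the level of $B$.
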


\begin{proof} By Lemma~\ref{orb}, the orbit of $u$ under the left action of $1+T_{B,\bar P}(q)$ is $u+X$, where 
$$
X= \{ u\in U \!:\; \Supp(u) \subseteq \bar E^{\bar R} \sm E \}.
$$ 
Hence there exists $g\in 1+T_{B,\bar P}(q)$ such that $\Supp(gu)=E$. By Lemma~\ref{normal_cl}, $g T_{B,P}(q) g^{-1}= T_{B,P}(q)$, i.e.\ $gLg^{-1}=L$. Thus
$$
M_{gu} = \Ann_L (gu)= g M_u g^{-1} \cong M_u.
$$ 
Now write $gu=\sum_{d\in E} \l_d e_d$.
Let $s=(s_{ij})_{i,j\in C\sm \{c_0\}}$ be the diagonal matrix given by
$$
s_{ij} =
\begin{cases}
\l_j^{-1} & \text{if } i=j\in E, \\
1 & \text{if } i=j\notin E, \\
0 & \text{if } i\ne j.
\end{cases}
$$
Then $sLs^{-1}=L$ and $sgu=u_E$, so 
$$
M_{gu}\cong \Ann_L (sgu) =M_E (q).
$$
Thus $M_u\cong M_{gu}\cong M_E (q)$. Now observe that $|L:M_u|=|L:M_E|$ is equal to the size of the orbit of $u_E$ under the action of 
$1+T_{B,P}(q)$ on $\mC^D$. Hence the second statement of the Lemma follows from Lemma~\ref{orb} (applied to the poset $(D,R')$ this time).
\end{proof}

\begin{proof}[Proof of Proposition~\ref{pr4}] Let $K$ and $L$ be as given by~\eqref{eqK} and~\eqref{eqL}.
Let $\phi\in\Irr(1+K)$, and denote by $u$ the corresponding element of $\mC^D$. Define $E(\phi)=\Top(\Supp(u),\bar P')$.  For $i=1,\ldots,n$ let
$$
S_i (q) = \{ (T_{C,R}(q), \chi) \!:\; \chi\in\Irr(T_{C,R}(q)|\phi) \text{ for some } \phi\in \Irr(1+K) \text{ such that } E(\phi)=E_i \}.
$$
By~Lemma~\ref{orb}, 
$E(\phi)=E(\psi)$ whenever $\phi,\psi\in\Irr(1+K)$ are in the same orbit under the action of $1+L$. Hence 
$$
S(q) = S_1 (q) \sqcup \cdots \sqcup S_n (q).
$$
By Lemma~\ref{lempr4}, for each $\phi\in \Irr(1+K)$, we can choose an isomorphism $\sigma_{\phi}:M_{\phi} \ra M_{E(\phi)} (q)$. It induces a bijection
$\sigma_{\phi}:\Irr(1+M_{\phi}) \ra \Irr(1+M_{E(\phi)}(q) )$.

Fix $i\in\{1,\ldots,n\}$ and define a map 
$\Phi_i (q):S_i (q) \ra \Irr(1+M_{E_i} (q))$ as follows. Let $\Omega_i (q)$ be the subset of $\Irr(1+K)$ corresponding, via our identification, to 
$$
\{ u\in \mC^D \! : \; E_i \subseteq \Supp(u) \subseteq E_i \cup (\bar E_i^{\bar P'} \sm \bar E_i^{R'}) \} \subseteq \mC^D
$$
It follows from Lemma~\ref{orb} that $\Omega_i (q)$ is a set of representatives of $(1+L)$-orbits on the set of characters 
$\phi\in\Irr(1+K)$ such that $E(\phi)=E_i$. For each $(T_{C,R}(q),\chi)\in S_i (q)$ there exists a unique $\phi\in \Omega_i(q)$ such that $\lan \chi_{1+K},\phi\ran >0$; define
$$
\Phi_i(q) (T_{C,R}(q),\chi) = (M_{E_i}(q), \sigma_{\phi} (F_{\phi}^{-1}(\chi)) ),
$$
where $F_{\phi}$ is as given by~\eqref{Fphi}.

We claim that the assemblies $S_1,\ldots,S_n$ and the families $\Phi_1,\ldots,\Phi_n$ of maps 
witness the correctness of the categorisation $((0),O_1,\ldots,O_n)$ for $\Irr(T_{C,R})$. 
For $i\in [1,n]$, 
we have $|\Omega_i (q)|=(q-1)^{|E_i|} q^{l_i}$ where
$$
l_i = |\bar E_i^{\bar P'}\sm \bar E_i^{R'}|, 
$$
whence $\Phi_i (q)$ is a $(q-1)^{|E_i|} q^{l_i}$-to-$1$ map, as required. 

Now suppose $\Phi_i (q)$ maps $(T_{C,R}(q),\chi)$ to $(M_{E_i} (q),\th)$. Then 
$$
\chi(1)= |L:M_{E_i} (q)| \th(1) = q^{|\bar E_i^{R'}\sm E_i|} \th(1)
$$
by Lemma~\ref{lempr4}. Also, by Lemmas~\ref{fast} and~\ref{nodiff}, $(T_{C,R}(q),\chi)$ and $(M_{\phi},F_{\phi}^{-1}(\chi))$ have isomorphic cores
 unless $\th$ is trivial, in which case
the cores in question are both small. On the other hand, the AC-pairs $(M_{\phi},F_{\phi}^{-1}(\chi))$ and $(M_{E_i} (q), \th)$ are isomorphic and hence have isomorphic cores. 
Thus,
the cores of $(T_{C,R}(q),\chi)$ and $(M_{E_i} (q),\th)$ are either isomorphic or are both small. This completes the proof of our claim. 
\end{proof}

The following result gives the structure of $M_E (q)$ whenever $E$ is an antichain in the poset $(D,R')$ and $|E|\le 2$. (The first case, $|E|=1$, is a restatement of
a description in~\cite{Isaacs2007}, Section 5).

\begin{lem}\label{smallac}
With the notation of Proposition~\ref{pr4}, let $E$ be an antichain in $D$ with respect to $R'$. Let $B=C\sm \{c_0\}$. If $E=\{k\}$ is a singleton, then 
$M_E(q)\cong T_{B,S}(q)$ for all $q$ where 
$$
S=(R\cap (B\times B)) \sm \{ (d,k) \!:\; d\in D\}.
$$
If $S=\{k,l\}$ is a two-element set, then $M_E (q)$ is isomorphic to the subalgebra of $L$ which has a basis consisting of the following elements:
\begin{eqnarray*}
e'_{ij} & \! := & e_{ij}, \qquad \quad\;  i,j\in B, \;(i,j)\in R \text{ and either } i\notin D \text{ or } j\notin \{k,l \}, \\
f'_i &\! := & e_{ik}+e_{il}, \quad  i\in D, \; (i,k)\in R,\; (i,l)\in R. \\
\end{eqnarray*}
The structure constants with respect to this basis are given by the following identities:
\begin{eqnarray*}
e'_{ij} e'_{rm} & = & \d_{jr} e'_{rm}, \\
e'_{ij} f'_m & = & \d_{jm} f'_i \qquad \qquad \quad  \,\, \text{if } i\in D,\\
e'_{ij} f'_m & = & \d_{jm} (e'_{ik} +e'_{il}) \qquad \text{if } i\notin D, \\
f'_m e'_{ij} & = & (\d_{ik}+\d_{il}) e'_{mj}, \\
f'_i f'_j & = & 0. \\
\end{eqnarray*}
\end{lem}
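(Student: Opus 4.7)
The plan is to compute $M_E(q)=\Ann_L(u_E)$ directly from the description of the left $L$-action on $\mC^D$ set up in the paragraph preceding Lemma~\ref{fast}. Under the identification of $\mC^D$ with $K^*$ that sends $e_d$ to the dual of $e_{c_0 d}$, the $L$-action is a form of matrix multiplication: the $i$-th coordinate of $\ell\cdot u$ (for $i\in D$) depends linearly on the entries $\ell_{id}$, $d\in D$. Thus the condition $\ell u_E=0$ becomes a system of linear equations, one for each $i\in D$, coupling the entries of $\ell$ in the columns indexed by $E$, and the task is to choose a basis of the solution space inside $L=T_{B,P}(q)$.

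When $E=\{k\}$ is a singleton the equations reduce to $\ell_{ik}=0$ for every $i\in D$. This simply removes the basis vectors $e_{dk}$, $d\in D$, from $T_{B,P}(q)$, so the solution space is spanned by $\{e_{ij}\!:(i,j)\in P,\ (i,j)\notin\{(d,k)\!:d\in D\}\}$. To identify this with $T_{B,S}(q)$ I only need to verify that $S$ is transitive. If $(i,j),(j,m)\in S$ then $(i,m)\in R$, and the forbidden case $i\in D$, $m=k$ would force $j\in D$ (via $(c_0,i),(i,j)\in R$ and transitivity), making $(j,m)=(j,k)$ itself excluded from $S$, a contradiction. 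Hence $S$ is a strict partial order and $T_{B,S}(q)$ is well defined.

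For $E=\{k,l\}$ an antichain, the equations couple columns $k$ and $l$ at each row $i\in D$. The basis of $M_E(q)$ is assembled by a case analysis on $(i,j)\in P$: if $i\notin D$ or $j\notin\{k,l\}$ no constraint arises and $e_{ij}$ remains free, contributing $e'_{ij}$; if $i\in D$ and exactly one of $(i,k),(i,l)$ lies in $R$ the equation forces the corresponding entry to $0$ and contributes nothing; and if $i\in D$ with both $(i,k),(i,l)\in R$ then a single linear relation between $\ell_{ik}$ and $\ell_{il}$ leaves a one-dimensional subspace spanned by the coupled element $f'_i$. Collecting these pieces yields exactly the claimed basis.

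With the basis in place, the structure constants follow by multiplying the basis elements inside $L$ using $e_{ij}e_{rm}=\delta_{jr}e_{im}$ and re-expressing the outcome. The antichain hypothesis $(k,l),(l,k)\notin R$ forces $f'_i f'_j=0$ and eliminates several cross-terms in products such as $e'_{ij}f'_m$; transitivity of $R$, together with the observation that $i\in D$ and $(i,j)\in R$ imply $j\in D$, keeps the bookkeeping consistent. The main obstacle is the case split in products like $e'_{ij}f'_m$, which branches depending on whether $i\in D$ or not, accounting for the two separate formulae in the statement; once the cases are enumerated, each individual product is a direct matrix-unit computation.
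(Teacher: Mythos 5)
Your overall strategy is the same as the paper's: compute the annihilator of a vector in $\mC^D$ directly from the matrix action of $L=T_{B,P}(q)$, and your singleton case (together with the optional check that $S$ is transitive) is fine. However, in the two-element case there is a sign slip that makes the decisive step fail for odd $q$. The module $M_E(q)$ is by definition $\Ann_L(u_E)$ with $u_E=e_k+e_l$, so the equations you obtain in a row $i\in D$ with both $(i,k),(i,l)\in R$ are $a_{ik}+a_{il}=0$; the surviving one-dimensional space is spanned by $e_{ik}-e_{il}$, not by $f'_i=e_{ik}+e_{il}$. Hence ``collecting these pieces'' does not yield the claimed basis, and the structure constants come out differently as well: for instance $(e_{mk}-e_{ml})e_{ij}=(\delta_{ik}-\delta_{il})e_{mj}$, whereas the statement requires $(\delta_{ik}+\delta_{il})e'_{mj}$. (In characteristic $2$ your computation is literally correct, but the lemma is asserted for all $q$.)

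Since the lemma only claims an isomorphism, the gap is small but must be closed by an explicit step that you omit: either conjugate by the diagonal matrix $s$ with $s_{ll}=-1$ and all other diagonal entries $1$, noting $sLs^{-1}=L$ and $s\,\Ann_L(e_k+e_l)\,s^{-1}=\Ann_L(e_k-e_l)$, or do what the paper does and invoke Lemma~\ref{lempr4} (whose proof is exactly this diagonal-conjugation argument) to replace $u_E$ by $u=e_k-e_l$. The condition $a(e_k-e_l)=0$ reads $a_{ik}=a_{il}$ for all $i\in D$, and this annihilator genuinely has the basis $\{e'_{ij}\}\cup\{f'_i\}$ with $f'_i=e_{ik}+e_{il}$, after which the listed structure constants follow by the matrix-unit computation you describe. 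With that one adjustment your argument coincides with the paper's proof.
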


\begin{proof} If $E=\{k\}$ then
$$
M_E=\Ann_L (e_k) = \{a\in L \! : \; a_{ik}=0 \text{ for all } i\in B\},
$$
so $M_E (q) =T_{B,S}(q)$. Now suppose $R=\{k,l\}$, and let $u=e_k-e_l\in \mC^D$. By Lemma~\ref{lempr4}, $M_E(q)\cong M_u$. 
For each $i\in D$ and $a\in L$, the $i$ entry of $a(e_k-e_l)$ in $e_i$ is $a_{ik}-a_{il}$, so
$$
M_u = \{a\in L \! : \; a_{ik}=a_{il} \text{ for all } i\in D \}.
$$
It is now easy to see that the set given in the statement of the lemma is a basis of $M_u$ and that the given relations between the basis vectors are satisfied.
\end{proof}

We now detail a general algorithm for finding a categorisation of $\Irr(T_{C,R})$, 
which combines the method given by Proposition~\ref{pr4} with the process described in Section~\ref{alg}. If there is an antichain $E$ in the poset $(D,\bar P')$ 
such that $|E|\ge 3$, with the notation as above, then the algorithm just refers the problem to the program described in Section~\ref{alg}. Otherwise, it uses the 
categorisation provided by Proposition~\ref{pr4}, with the algebras appearing in that categorisation expressed as specified in Lemma~\ref{smallac}.

\medskip
\begin{center} \textbf{Function} \emph{PatternAlgebra} \end{center}
\nopagebreak

\ni\emph{Input:} a finite set $C$ and a strict partial order $R$ on $C$.
\nopagebreak

\ni\emph{Output:} a categorisation of $\Irr(T_{C,R})$.

\begin{enumerate}
\item If $R=\varnothing$, \textbf{return} the output $(O_0)$ with $O_0=(q^{|C|})$. Else continue:
\item Choose a minimal element $c_0$ of $C$ (with respect to $R$).
\item Set $D=\{d\in C \!:\, (c_0,d)\in R\}$, $B=C\sm \{c_0\}$ and $P=R\cap (B\times B)$.  
\item Choose a total order $<$ on $B$ containing $P$, and let $\bar P$ be the normal closure of $R$ in $<$. 
(In practice, we choose $<$ so that $|\bar P|$ is as large as possible.)
Set $R'=R\cap (D\times D)$ and $\bar P'=\bar P\cap (D\times D)$.
\item Enumerate all the antichains of the poset $(D,\bar P')$; denote these by $E_1,\ldots,E_n$.
\item If $|E_i|\ge 3$ for some $i$ then construct algebraic data $\mbf A$ such that $\Irr(\mbf A)\cong \Irr(T_{C,R})$ (this is easily done), 
run~\emph{General} on $\mbf A$ and \textbf{return} the output. 
Else continue:
\item\label{runs} For each $i=1,\ldots,n$ proceed as follows.

If $|E_i|=1$, say $E_i=\{d_0\}$, then:

\begin{tabular}{p{1cm}p{11.5cm}}
  & 
Set 
$
S=(R\cap (B\times B)) \sm \{ (d,d_0) \! : \, d\in D\}.
$
Run \emph{PatternAlgebra} recursively on $(B,S)$ and denote the output by $O_i$.
\end{tabular}

If $|E_i|=2$ then:

\begin{tabular}{p{1cm}p{11.5cm}}
 &
Using the basis and relations given in Lemma~\ref{smallac}, construct algebraic data $\mbf A$ such that $\Irr(M_{E_i})\cong \Irr(\mbf A)$ (no parameters are used in $\mbf A$). 
The ordering of the basis set of $\mbf A$ is as follows (in the notation of Lemma~\ref{smallac}): 
$e'_{ij}$ comes before $e'_{rm}$ if and only if $i>r$ or $i=r$ and $j<m$; and $f'_i$ comes before $e'_{jm}$ if and only if $i>j$ or $i=j$ and $\min(k,l)<m$. 
It is straightforward to check that this ordering satisfies the relevant condition of the definition of algebraic data.
Run~\emph{General} on $\mbf A$, and denote the output by $O_i$.
\end{tabular}

\item For each $i=1,\ldots,n$, multiply all the character counts in $O_i$ by $(q-1)^{|E_i|}q^{|\bar E_i^{\bar P'}\sm \bar E_i^{R'}|}$ and all the character degrees in $O_i$ by
$t^{|\bar E_i^{R'}\sm E_i|}$, denoting the result $O'_i$.
\item \textbf{Return} the aggregate of $O'_1,\ldots,O'_n$. By Proposition~\ref{pr4}, Lemma~\ref{smallac} 
and Lemma~\ref{aggr}, this is indeed a correct categorisation for $\Irr(T_{C,R})$ 
(if we assume, inductively, that all the runs of \emph{PatternAlgebra} in Step~\ref{runs} return correct categorisations of their inputs).
\end{enumerate}

\begin{remark} All contractions of Type B performed by the algorithms of this and the previous sections are of a rather special kind: the good pairs $(Z,Y)$ which they employ 
satisfies $JY=0$, where $J$ is the ambient algebra. 
\end{remark}

\section{Output}\label{output}

We now describe the output when the function~\emph{PatternAlgebra} is run on the set $[1,n]$, $n\le 13$, with the usual total order $<$ acting as $R$. 
If $n\le 12$, the output consists of just one family $O_0$, which in its turn contains only a polynomial $f\in \mZ[q,t]$ encoding the numbers $N_{n,e}(q)$ for all $e$ and $q$.
Hence all AC-pairs of the form $(T_n (q),\chi)$, $n\le 12$, have small cores (as claimed in Theorem~\ref{main}).

If $n=13$, the output is of the form $(O_0,O_1,O_2)$. The data $O_0$ again consists just of a single polynomial from $\mZ[q,t]$. For $i=1,2$, we have
$O_i=(F_i,k_i,l_i,m_i)$ with $F_i=\Irr(\mbf A_i,z)$ where $\mbf A_i=(Q_i,E_i,B,R)$, $B=\{y,z\}$, $R(y,y,z)\ne 0$ and all the other products of basis vectors are zero. 
Hence all the AC-pairs $(K,\phi)\in \Irr(\mbf A_i,y)(q)$ satisfy $K\cong x\mF_q[x]/(x^3)$ and $1+K^2\nsubseteq \ker\phi$. 
Clearly, each such pair $(K,\phi)$ is uncontractible. The restrictions $E_2$ are contradictory, so $\mbf A_2$ does not, in fact, encode any AC-pairs.

Let $S_0,S_1,S_2$ be the assemblies witnessing the correctness of the categorisation $O$ for $\Irr(T_{[1,13],<})$. 
We see that the AC-pairs in $S_1(q)$ all have $2$-dimensional cores described in the statement of Theorem~\ref{main}. We have $m_1=16$, 
so all characters $\chi\in \Irr(T_{13}(q))$ lying in $S_1 (q)$ have degree $q^{16}$; and using data returned by the algorithm, we calculate that there are $q(q-1)^{13}$ such characters.
The characters in $S_0 (q)$ all have small cores (for every $q$). This concludes the proof of Theorem~\ref{main}, and hence of Theorem~\ref{wi}. 

The data returned by the program allows us to find $N_{n,e}(q)$ for $n\le 13$ and all values of $e$ and $q$. We observe that, for all $n\le 13$ and all $e$, 
$N_{n,e}(q)$ can be expressed as a polynomial with integer coefficients in $q$, so Conjecture~\ref{conj_poly} is true in these cases. The values of $N_{n,e}(q)$ for $n\le 9$ were already found by Isaacs~\cite{Isaacs2007}. The methods used in \emph{loc.\ cit.}\ do not give a proof that the 
polynomials found were correct; our present computation confirms that they indeed are. The polynomials $N_{n,e}(q)$ for $10\le n\le 13$ are listed in the Appendix.

The running time of the program for $n=13$ is approximately $20$ minutes on a ``standard'' computer (at the time of writing), and for $10\le n \le 14$ the time increases about tenfold each time $n$ is increased by $1$. The output of the algorithm for $n=14$ contains a considerable 
number of ``exceptional'' cases, and a computation of the $N_{14,e}(q)$, $e\in \mN$, while technically feasible, would require further programming. (However, it appears that $N_{14,e}(q)$ are polynomial in $q$ with integer coefficients.) 

\begin{remark} The polynomials given in the Appendix agree with previously available partial data. In particular, the value obtained for $|\Irr(U_n (q))|$ agree
with the numbers of conjugacy classes of $U_n (q)$ found by J.M.\ Arregi and A.\ Vera-Lopez. Also, I.M.\ Isaacs has checked that the values of $N_{13,e}(2)$, $e\ge 0$, calculated by the present algorithm are the same as those returned by an algorithm due to M.\ Slattery (see~\cite{Slattery1986}).
\end{remark}

\begin{remark} The polynomials $N_{n,e}(t+1)$, $n\le 13$, all have nonnegative coefficients in $t$.
\end{remark}

\section{Examples}\label{examples}

Now we demonstrate where the characters of $U_{13} (q)$ with non-small cores come from. We will construct an appropriate example of repeated contractions, 
which are all performed via 
Lemma~\ref{fast}. Throughout, we use the notation of Section~\ref{spec_cases}. We will start with the nilpotent algebra $J_1=T_{13}(q)$ and will construct, recursively, 
a sequence $J_1, \ldots, J_k$ 
as follows. We will find subalgebras $K_i$ and $L_i$ satisfying Hypothesis~\ref{hyp31}.
 We will choose a certain character $\phi_i\in \Irr(1+K)$ and set $J_{i+1}=M_{\phi}$. 
By Lemma~\ref{fast} and Lemma~\ref{nodiff} the composition 
$$
F_{\phi_1} \circ F_{\phi_2} \circ \cdots \circ F_{\phi_{k-1}} 
$$
 is then an injective map
$F:\Irr(1+J_k) \ra \Irr(1+J_1)$ with the property that, for each $\th\in\Irr(1+J_k)$, the AC-pairs 
$(J_1,F(\th))$ and $(J_k,\th)$ have cores that are either isomorphic or are both small. 

Whenever $J_i$ is a pattern algebra $T_{C,R}(q)$, we will choose a minimal element $c_0$ of the poset $(C,R)$ and define $K_i$ and $L_i$ by~\eqref{eqK} and~\eqref{eqL}. If 
$D$ is as in the preceding section, we may identify $\Irr(1+K)$ with $\mC^D$, so we shall specify $\phi_i$ by an element $u_i\in \mC^D$.
Posets $(C,R)$ will be represented by diagrams. Each diagram will consist of nodes corresponding to elements of $C$ and edges connecting those nodes. We say that $i$ is less than $j$ 
if $i$ and $j$ are connected by an edge and $i$ is located to the left of $j$; and we take $R$ to be the transitive closure of this relation.
Thus our original algebra $J_1=T_{13}(q)$ corresponds to the diagram shown in Fig.~\ref{step0}.

\begin{figure}\caption{The initial algebra $J_1$}\label{step0}
\begin{center}
\includegraphics{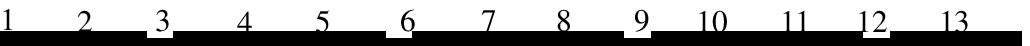}
\end{center}
\end{figure}

\ni\textbf{Step 1.} We set $c_0=1$, so $D=[2,13]$. Choose $u_1=\l_1 e_5$ where $\l_1\in\mF_q^{\times}$ is an arbitrary non-zero scalar. By Lemma~\ref{smallac} (and Lemma~\ref{lempr4}), the 
resulting algebra $J_2 = M_{u_1}$ is the pattern algebra given by Fig.~\ref{step1}.

\begin{figure}\caption{The algebra $J_2$}\label{step1}
\begin{center}
\includegraphics{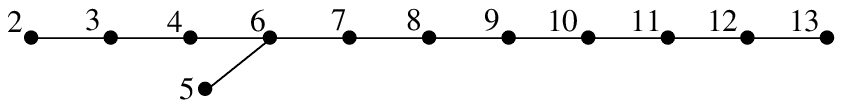}
\end{center}
\end{figure}

\ni\textbf{Step 2.} Now let $c_0=2$ and choose $u_2=\l_2 e_6$, $\l_2\in \mF_q^{\times}$. Then $J_3=M_{u_2}$ is given by the diagram shown in Fig.~\ref{step2}.

\begin{figure}\caption{The algebra $J_3$}\label{step2}
\begin{center}
\includegraphics{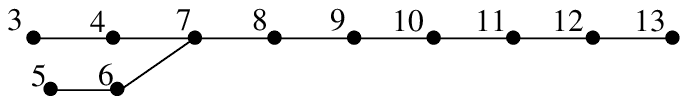}
\end{center}
\end{figure}

\ni\textbf{Step 3.} We set $c_0=3$ and pick $u_3=\l_3 e_{10}$, $\l_3\in \mF_q^{\times}$. The resulting algebra $J_4=M_{u_3}$ is given by Fig.~\ref{step3}.

\begin{figure}\caption{The algebra $J_4$}\label{step3}
\begin{center}
\includegraphics{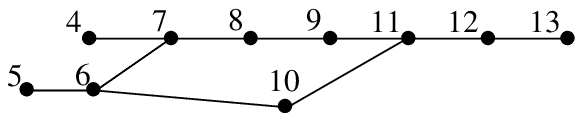}
\end{center}
\end{figure}

\ni\textbf{Step 4.} Let $c_0=4$ and $u_4=\l_4 e_{11}$, $\l_4\in \mF_q^{\times}$. We obtain the pattern algebra $J_5 = M_{u_4}$, which corresponds to the poset shown 
in Fig.~\ref{step4}.

\begin{figure}\caption{The algebra $J_5$}\label{step4}
\begin{center}
\includegraphics{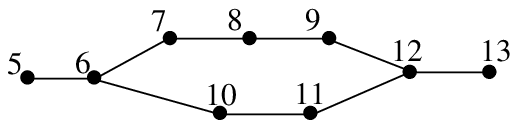}
\end{center}
\end{figure}

From now on, we shall use additional notation. Suppose that a poset $(C,R)$ is given by a diagram as before. If $2m$ distinct edges $(i_s,j_s)$, $(k_s,l_s)$, $s=1,\ldots,m$,
are labelled so that the labels on $(i_1,j_1),\ldots,(i_m,j_m)$ are all distinct and the labels on $(i_s,j_s)$ and $(k_s,l_s)$ are the same for each $s\in [1,m]$ then the 
vector subspace of $T_{C,R}(q)$ represented by the diagram is
$$
\{ a=(a_{ij})_{i,j\in C} \in T_{C,R}(q) \! : \; a_{i_s j_s} = a_{k_s l_s} \text{ for all } s\in [1,m] \}.
$$
In all our examples this subspace will actually be a subalgebra.

\ni\textbf{Step 5.} Set $c_0=5$ and choose $u_5=\l_5 e_{11}+ \l_6 e_8$, where $\l_5,\l_6\in\mF_q^{\times}$. By the proof of Lemma~\ref{smallac} and by Lemma~\ref{lempr4}, 
 the algebra $J_6=M_{u_5}$ is isomorphic to the one represented by Fig.~\ref{step5}.

\begin{figure}\caption{The algebra $J_6$}\label{step5}
\begin{center}
\includegraphics{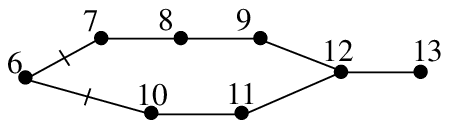}
\end{center}
\end{figure}

(The isomorphism is the identity map if $\l_5=1$ and $\l_6=-1$.)

As we are no longer dealing with a pattern algebra, the following steps will be described in the more general framework of the discussion before Lemma~\ref{fast}. 

\ni\textbf{Step 6.}
As before, we set
\begin{eqnarray*}
K_6 & = & \{ a\in J_6 \!: \; a_{ij} =0 \text{ for all } i,j\in [6,13] \text{ such that } i\ne 6 \},  \\
L_6 & = & \{ a\in J_6 \! : \; a_{6 j} =0 \text{ for all } j\in [7,13] \}. 
\end{eqnarray*}
Let $V=\lan e_{6j} \ran_{j\in [7,13]}$. Then
$$
K_6= \lan e_{6,7}+e_{6,10}, e_{6,8}, e_{6,9}, e_{6,11}, e_{6,12}, e_{6,13} \ran \subseteq V.
$$
By an observation in Section~\ref{spec_cases}, $V^*$ is isomorphic to $\mC^{[7,13]}$ as a left $L_6$-module, whence we have an $L_6$-module isomorphism
$$
K_6^* \cong \mC^{[7,13]}/K_6^{\perp} = \mC^{[7,13]}/ \lan e_7-e_{10} \ran. 
$$
So we have a one-to-one correspondence between $\Irr(1+K_6)$ and $\mC^{[7,13]}/ \lan e_7-e_{10} \ran$. Let $\phi_6\in\Irr(1+K_6)$ be the character corresponding to
$u_6= \l_7 e_8 + \l_8 e_{11} + \lan e_7-e_{10} \ran$, where $\l_7,\l_8\in\mF_q^{\times}$. 

Arguing as in the proof of Lemma~\ref{lempr4}, we consider the diagonal matrix $s=(s_{ij})_{i,j\in [7,13]}$ with $s_{8,8}=\l_7^{-1}$, $s_{11,11}=-\l_8^{-1}$ and $s_{ii}=1$ for $i\ne 8,11$.
Observe that $sL_6 s^{-1} = L_6$, so 
$$
M_{u_6} \cong M_{s u_6}= M_{e_8-e_{11}+\lan e_7+e_{10} \ran}.  
$$
It follows that $J_7\cong \Ann_L (e_8 - e_{11}+ \lan e_7+e_{10} \ran)$, whence $J_7$ is represented by Fig.~\ref{step6}.

\begin{figure}\caption{The algebra $J_7$}\label{step6}.
\begin{center}
\includegraphics{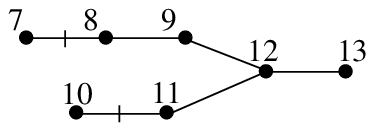}
\end{center}
\end{figure}

\ni\textbf{Step 7.} 
Let $A=\{7,10\}$ and $B=\{8,9,11,12,13\}$, and set
\begin{eqnarray*}
K_7 & = & \{ a\in J_7 \!: \; a_{ij} =0 \text{ for all } i,j\in B \},  \\
L_7 & = & \{ a\in J_7 \! : \; a_{ij} =0 \text{ for all } i\in A \text{ and } j\in B  \},
\end{eqnarray*}
so that Hypothesis~\ref{hyp31} is satisfied. Proceeding similarly to the previous step, 
we can embed $K_7$ into the vector space $V=\lan e_{7,j}, e_{10,j} \ran_{j\in B}=M_{A,B}(q)$. Then $V^*$ may be identified with
$M_{B,A}(q)$ (so that $(u,v)=\trace(uv)$ for $v\in V$, $u\in M_{B,A}(q)$), whence the left $L_7$-module $K_7^*$ is isomorphic to 
$$
M_{B,A}(q)/K_7^{\perp} = M_{B,A}(q)/ \lan e_{8,10}, e_{9,10}, e_{11,7}, e_{11,10}-e_{8,7} \ran.
$$
Let $\phi_7\in\Irr(1+K)$ be the character corresponding to the image of $\l_9 e_{9,8} +\l_{10} e_{12,11}$ in $M_{B,A}(q)/K_7^{\perp}$ (which we identify with $K_7^*$), 
where $\l_9,\l_{10}\in \mF_q^{\times}$. 
Arguing as in Step 6, we infer that the isomorphism class of the algebra $J_8=M_{\phi_8}$ does not depend on the choice of $\l_9$ and $\l_{10}$. 
Setting $\l_9=-\l_{10}=1$, 
we see that $J_8$ is represented by the diagram Fig.~\ref{step7}.

\begin{figure}\caption{The algebra $J_8$}\label{step7}
\begin{center}
\includegraphics{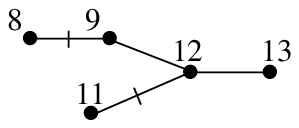}
\end{center}
\end{figure}

\ni\textbf{Step 8.}
We let $A=\{8,11\}$, $B=\{9,12,13\}$ and proceed exactly as in the previous step. 
We have an isomorphism of $L_8$-modules: 
$$
K_8^* \cong M_{B,A}(q)/K_8^{\perp} = M_{B,A}(q) / \lan e_{9,11}, e_{9,8}-e_{12,11} \ran
$$
Let $\phi_8\in\Irr(1+K_8)$ correspond to the image of $\l_{11} e_{12,9}+\l_{12} e_{13,12}$ in $M_{B,A}(q)/K_8^{\perp}$,
where $\l_{11},\l_{12}\in \mF_q^{\times}$. Then $J_9=M_{\phi_8}$ can be represented by Fig.~\ref{step8}.

\begin{figure}\caption{The algebra $J_9$}\label{step8}
\begin{center}
\includegraphics{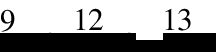}
\end{center} 
\end{figure}

So 
$$
J_9 \cong \left\{ 
\begin{pmatrix}
0 & y & z \\
0 & 0 & y \\
0 & 0 & 0 \\
\end{pmatrix} \! : \; y,z \in \mF_q
\right \},
$$
and hence $J_9\cong x\mF_q[x]/(x^3)$. As we observed in Section~\ref{spec_cases},
 the AC-pairs $(J_9,\chi)$ with $1+J_9^2\nsubseteq \ker\chi$ are uncontractible, so the image of $\Irr(1+J_9,J_9^2)$ under the injective map $F:\Irr(1+J_9)\ra \Irr(1+J_1)$ 
consists of characters with non-small cores. Note that if any of the scalars $\l_1,\ldots,\l_{12}$ is altered then
 for some $i\in [1,8]$ the corresponding character $\phi_i$ changes to a character of $1+K_i$ which is not in the same orbit under the action of $1+L_i$. Hence, 
if $F'$ is obtained by changing $F$ accordingly, then the images of $F$ and $F'$ are disjoint. Since there are $(q-1)^{12}$ choices of scalars $\l_1,\ldots,\l_{12}$ and 
$|\Irr(1+J_8,J_8^2)|= q(q-1)$, the process above accounts for $q(q-1)^{13}$ characters of $U_{13} (q)$. By Theorem~\ref{main}, these are in fact all the characters of
$U_{13}(q)$ with non-small cores. 

\bigskip

We now construct an example proving Theorem~\ref{size25}. 
In terms of diagrams, the idea is to start with the poset $([1,25],<)$, do the 8 steps above on elements $1,\ldots,13$ keeping the rest of the poset unchanged and then
perform the symmetric reflection of the same 8 steps on entries $25,\ldots,13$. 

Our starting point is the example above, which can be reformulated as follows. 
Each algebra $J_i$, $i=1,\ldots,9$, is embedded, in the obvious way, into a matrix algebra, 
which we may write as $\End_{\mC}(R_i)$ where $R_i$ is the corresponding vector space of column vectors (for example, $R_1=\lan e_1,\ldots,e_{13} \ran$ and 
$R_9=\lan e_9, e_{12}, e_{13} \ran$). 
Observe that, for each $i\in [1,8]$, there is a direct sum decomposition $R_i=Q_i \oplus R_{i+1}$ such that
\begin{eqnarray}
K_i & = & \{ a\in J_i \!: \; \im (a) \subseteq Q_i \}, \label{Kinew} \\
L_i & = & \{ a\in J_i \! : \; a(R_{i+1}) \subseteq R_{i+1} \}. \label{Linew}
\end{eqnarray}
For example, if $i\le 5$ then $Q_i=\lan e_{c_0} \ran$ (for the corresponding $c_0$) and $R_{i+1}=\lan e_j \ran_{j\in C\sm \{c_0\}}$.

Let $J'_1=T_{25}(q)$, so that $J'_1\subseteq \End(R_1\oplus W)$, where $W=\lan e_{14},\ldots,e_{25} \ran$. Let $R'_i=R_i\oplus W$ for $i=1,\ldots,9$. 
We perform the same Steps 1--8 as above, 
except that we now start with the algebra $J'_1$ and replace each $R_i$ with $R'_i$. More precisely, suppose we have reached an algebra 
$J'_i\subseteq \End(R'_i)$ after $i-1$ steps. Set $Q'_i=Q_i$ and define $K'_i$ and $L'_i$ by~\eqref{Kinew} and~\eqref{Linew}, with $Q_i$ and $R_{i+1}$ replaced by $Q'_i$ and $R'_{i+1}$ respectively. We make the following inductive assumption:
\begin{equation}\label{Ji}
J'_i \supseteq \{ a\in \End(R'_i) \! : \; \ker(a) \supseteq R_i \text{ and } \im(a) \subseteq W \}.
\end{equation}
(Note that this holds when $i=1$.) Observe that $K'_i$ is a direct sum of $K_i$ and
$$
X_i = \{a\in \End(R_i') \! : \; \ker(a) \supseteq R_i \text{ and } \im(a) \subseteq Q_i\}.  
$$
We define $u'_i\in (K'_i)^*$ by extending $u_i\in K_i^*$ so that $u'_i (X_i)=0$. Then 
$$
J'_{i+1}=\Ann_{L'_i} (u'_i) = \{ a\in \End(R_{i+1} \oplus W) \! : \; a(R_{i+1})\subseteq R_{i+1}, \; a|_{R_{i+1}} \in M_{u_i} \text{ and } a \in \pi(J'_i) \}, 
$$
where $\pi: \End(R_i\oplus W)\ra \End(R_{i+1}\oplus W)$ is the linear map given by removing the rows and columns corresponding to $Q_i$. 
We see that $J'_{i+1}$ satisfies~\eqref{Ji}, and this implies that $K'_{i+1}$ and $L'_{i+1}$ satisfy Hypothesis~\ref{hyp31} (insofar as $K_{i+1}$ and $L_{i+1}$ do).
After 8 steps we obtain the algebra $J'_9$ given by Fig.~\ref{n25step8}

\begin{figure}\caption{The algebra $J'_9$}\label{n25step8}.
\begin{center}
\includegraphics{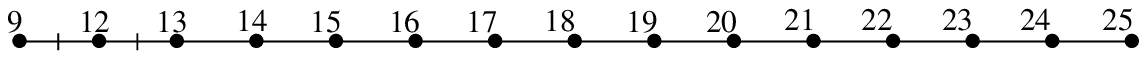}
\end{center}
\end{figure}

We repeat the same 8 steps setting the new $J'_1$ to be this algebra, reversing the order of multiplication (see the remark after Hypothesis~\ref{hyp31}), replacing $e_1,\ldots,e_{13}$ with $e_{25},\ldots,e_{13}$ respectively (so that $R_1=\lan e_{25},\ldots,e_{13} \ran$) and letting $W=\lan e_{9}, e_{12} \ran$. Note that condition~\eqref{Ji} is then satisfied for the $J'_1$, so the process described in the previous paragraph does apply in this situation. The resulting algebra $E$ is given by Fig.~\ref{n25step16}.

\begin{figure}\caption{The algebra $E$}\label{n25step16}.
\begin{center}
\includegraphics{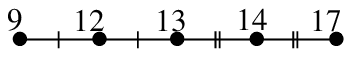}
\end{center}
\end{figure}

As in the first example of this section, we have an injective map $F:\Irr(1+E)\ra \Irr(U_{25}(q))$ such that, for all $\th\in\Irr(1+E)$, 
 the AC-pairs $(E,\th)$ and $(T_{25}(q),F(\th))$ have cores that are either isomorphic or are both small. 

We identify $E$ with the algebra consisting of all matrices $a\in T_5 (q)$ such that $a_{12}=a_{23}$ and $a_{34}=a_{45}$. Let 
$$
I=\left\{ \begin{pmatrix}
0 & 0 & a_{13} & * & * \\
0 & 0 & 0 &  a_{24} & * \\
0 & 0 & 0 & 0 & a_{35} \\
0 & 0 & 0 & 0 & 0 \\
0 & 0 & 0 & 0 & 0 \\
\end{pmatrix} 
\! : \; a_{13}+a_{24}+a_{35}=0
\right\},
$$
where the symbols $*$ denote arbitrary elements of $\mF_q$ that are independent of each other.

We consider the case $q=2$. Then $1+(E/I)$ is isomorphic to the quaternion group of order $8$. So $1+(E/I)$ has a unique irreducible character $\th$ of degree $2$, and $\th$ is real-valued but is not of real type. Hence, by Proposition~\ref{char2eqv}, $F(\th)\in\Irr(U_{25}(q))$ is also real-valued but is not of real type. 
This proves Theorem~\ref{size25}. We remark that if $q\ne 2$ then irreducible characters of $1+(E/I)$ behave differently: they all have small cores.

\begin{appendix}

\section{Appendix: some values of  $N_{n,e}(q)$} 
We list all the non-zero polynomials $N_{n,e}(q)$ for $10\le n\le 13$. The values for $n\le 9$ are given in~\cite{Isaacs2007}.

\doublespacing

\begin{longtable}{|c|l|}
\multicolumn{2}{c}{$n=10$}\\
\hline
$e$ & \multicolumn{1}{c|}{ $N_{n,e}(q)$}  \\
\hline\endfirsthead
\multicolumn{2}{c}{$n=10$ (continued)}\\
\hline
$e$ & \multicolumn{1}{c|}{ $N_{n,e}(q)$}  \\
\hline\endhead
\multicolumn{2}{c}{\textit{Continued on the next page}}
\endfoot
\endlastfoot
0 & $q^9$ \\
\hline
1 & $7q^9 - 6q^8 - q^7$ \\
\hline
2 & $ 5q^{10} + 12q^9 - 26q^8 + 3q^7 + 6q^6$ \\
\hline
3 & $3q^{11} + 12q^{10} + 4q^9 - 47q^8 + 15q^7 + 23q^6 - 10q^5$ \\
\hline
4 & $q^{12} + 6q^{11} + 25q^{10} - 20q^9 - 77q^8 + 59q^7 + 33q^6 - 30q^5 + 3q^4$ \\
\hline
5 & $2q^{12} + 19q^{11} + 21q^{10} - 58q^9 - 87q^8 + 127q^7 + 26q^6 - 63q^5 + 10q^4 + 3q^3$ \\
\hline
6 & $ 6q^{12} + 21q^{11} + 37q^{10} - 137q^9 - 45q^8 + 182q^7 + 6q^6 - 109q^5 + 40q^4 - q^2$ \qquad \qquad \qquad \\
\hline
7 & \!\!\!\!\! 
$\begin{array}{l}
2q^{13} + 3q^{12} + 34q^{11} + 35q^{10} - 204q^9 - 3q^8 + 261q^7 - 54q^6 - 134q^5 \\
+ 62q^4 - q^3 - q^2
\end{array}$ \\
\hline
8 & \!\!\!\!\!
$\begin{array}{l} 
q^{13} + 11q^{12} + 27q^{11} + 28q^{10} - 204q^9 - 35q^8 + 372q^7 - 137q^6 - 143q^5 \\
+ 82q^4 + 4q^3 - 6q^2 \\
\end{array}$ \\
\hline
9 & \!\!\!\!\!
$\begin{array}{l} 
10q^{12} + 45q^{11} - 3q^{10} - 205q^9 - 32q^8 + 424q^7 - 159q^6 - 189q^5 \\
+ 114q^4 + q^3 - 6q^2
\end{array}$   \\
\hline
10 & \!\!\!\!\!
$\begin{array}{l} 
2q^{13} + 4q^{12} + 36q^{11} + 20q^{10} - 203q^9 - 57q^8 + 442q^7 - 138q^6 - 253q^5 \\
+ 159q^4 + q^3 - 13q^2 - q + 1 \\
\end{array}$ \\
\hline
11 & \!\!\!\!\!
$\begin{array}{l} 
6q^{12} + 19q^{11} + 53q^{10} - 179q^9 - 147q^8 + 478q^7 - 85q^6 - 297q^5 \\
+ 161q^4 - 2q^3 - 6q^2 - q \\
\end{array}$ \\
\hline
12 & \!\!\!\!\!
$\begin{array}{l} 
2q^{12} + 18q^{11} + 35q^{10} - 86q^9 - 232q^8 + 432q^7 + 35q^6 - 372q^5 + \\
+ 160q^4 + 29q^3 - 22q^2 + q \\
\end{array}$ \\
\hline
13 & \!\!\!\!\!
$\begin{array}{l} 
10q^{11} + 30q^{10} - 30q^9 - 208q^8 + 226q^7 + 228q^6 - 394q^5 \\
+ 105q^4 + 60q^3 - 29q^2 + 2q \\
\end{array}$ \\
\hline
14 & \!\!\!\!\!
$\begin{array}{l} 
 2q^{11} + 20q^{10} + 7q^9 - 134q^8 + 30q^7 + 286q^6 - 246q^5 - 46q^4 \\
+ 110q^3 - 23q^2 - 9q + 3 \\
\end{array}$ \\
\hline
15 &  
$4q^{10} + 30q^9 - 68q^8 - 67q^7 + 205q^6 - 44q^5 - 145q^4 + 98q^3 - 8q^2 - 5q$ 
 \\
\hline
16 & 
$
10q^9 + 7q^8 - 90q^7 + 86q^6 + 68q^5 - 125q^4 + 29q^3 + 27q^2 - 13q + 1
$ \\
\hline
17 & 
$12q^8 - 22q^7 - 27q^6 + 85q^5 - 47q^4 - 20q^3 + 23q^2 - 3q - 1$ \\
\hline
18 &
$
9q^7 - 28q^6 + 21q^5 + 16q^4 - 28q^3 + 9q^2 + 2q - 1 
$ \\
\hline
19 &
$
4q^6 - 15q^5 + 20q^4 - 10q^3 + q
$ \\
\hline
20 & 
$
q^5 - 4q^4 + 6q^3 - 4q^2 + q
$ \\
\hline
\end{longtable}

\begin{longtable}{|c|l|}
\multicolumn{2}{c}{$n=11$}\\
\hline
$e$ & \multicolumn{1}{c|}{$N_{n,e}(q)$} \\
\hline\endfirsthead
\multicolumn{2}{c}{$n=11$ (continued)}\\
\hline
$e$ & \multicolumn{1}{c|}{$N_{n,e}(q)$} \\
\hline\endhead
\multicolumn{2}{c}{\textit{Continued on the next page}}
\endfoot
\endlastfoot
0 &
$
q^{10}
$ \\
\hline
1 & 
$
8q^{10} - 7q^9 - q^8
$\\
\hline
2 & 
$
6q^{11} + 17q^{10} - 37q^9 + 7q^8 + 7q^7
$\\
\hline
3 & 
$
4q^{12} + 19q^{11} + 3q^{10} - 74q^9 + 35q^8 + 28q^7 - 15q^6
$\\
\hline
4 & 
$
2q^{13} + 11q^{12} + 35q^{11} - 36q^{10} - 124q^9 + 118q^8 + 40q^7 - 55q^6 + 9q^5
$\\
\hline
5 & 
$
8q^{13} + 25q^{12} + 40q^{11} - 126q^{10} - 129q^9 + 263q^8 + q^7 - 116q^6 + 31q^5 + 3q^4
$\\
\hline
6 & 
$ \!\!\!\!\!
\begin{array}{l}
2q^{14} + 10q^{13} + 63q^{12} - 2q^{11} - 246q^{10} - 27q^9 + 396q^8 - 90q^7 - 204q^6 \\
+ 106q^5 - 5q^4 - 3q^3 \\
\end{array}
$\\
\hline
7 & \!\!\!\!\!
$\begin{array}{l}
4q^{14} + 28q^{13} + 70q^{12} - 50q^{11} - 383q^{10} + 190q^9 + 480q^8 - 264q^7 - 237q^6 \\
+ 193q^5 - 29q^4 - 2q^3 \\
\end{array}
$\\
\hline
8 & \!\!\!\!\!
$
\begin{array}{l}
2q^{15} + 6q^{14} + 32q^{13} + 80q^{12} - 63q^{11} - 576q^{10} + 470q^9 + 557q^8 - 523q^7 - 192q^6 \\
+ 259q^5 - 45q^4 - 9q^3 + 2q^2 \\
\end{array}
$\\
\hline
9 & \!\!\!\!\!
$
\begin{array}{l}
q^{15} + 12q^{14} + 43q^{13} + 94q^{12} - 151q^{11} - 672q^{10} + 726q^9 + 671q^8 - 876q^7 - 108q^6 \\
+ 341q^5 - 71q^4 - 11q^3 + q^2 \\
\end{array}
$ \\
\hline
10 & \!\!\!\!\!
$\begin{array}{l}
 2q^{15} + 15q^{14} + 44q^{13} + 110q^{12} - 239q^{11} - 703q^{10} + 964q^9 + 684q^8 - 1198q^7 \\
+ 12q^6 + 453q^5 - 134q^4 - 15q^3 + 4q^2 + q \\
\end{array}$ \\
\hline
11 & \!\!\!\!\!
$\begin{array}{l}
2q^{15} + 10q^{14} + 69q^{13} + 86q^{12} - 290q^{11} - 647q^{10} + 986q^9 + 781q^8 - 1422q^7 \\
+ 84q^6 + 530q^5 - 187q^4 - 8q^3 + 5q^2 + q \\
\end{array}$ \\
\hline
12 & \!\!\!\!\!
$\begin{array}{l}
2q^{15} + 11q^{14} + 50q^{13} + 111q^{12} - 277q^{11} - 686q^{10} + 967q^9 + 936q^8 - 1581q^7 \\
+ 66q^6 + 613q^5 - 198q^4 - 23q^3 + 8q^2 + q
\end{array}$ \\
\hline
13 & \!\!\!\!\!
$\begin{array}{l}
2q^{15} + 8q^{14} + 40q^{13} + 93q^{12} - 165q^{11} - 716q^{10} + 733q^9 \\
 + 1214q^8 - 1579q^7 - 191q^6 + 842q^5 - 261q^4 - 39q^3 + 19q^2 \\
\end{array}$ \\
\hline
14 & \!\!\!\!\!
$\begin{array}{l}
8q^{14} + 24q^{13} + 88q^{12} - 53q^{11} - 736q^{10} + 450q^9 + 1473q^8 - 1467q^7 - 469q^6 \\
+ 968q^5 - 250q^4 - 55q^3 + 17q^2 + 3q - 1 \\
\end{array}$ \\
\hline
15 & \!\!\!\!\!
$\begin{array}{l}
29q^{13} + 57q^{12} + 5q^{11} - 567q^{10} + 72q^9 + 1507q^8 - 1043q^7 - 856q^6 \\
+ 1053q^5 - 173q^4 - 116q^3 + 30q^2 + 2q \\
\end{array}$\\
\hline
16 & \!\!\!\!\!
$\begin{array}{l}
q^{14} + 2q^{13} + 74q^{12} + 11q^{11} - 359q^{10} - 228q^9 + 1298q^8 - 479q^7 - 1054q^6 \\
+ 870q^5 - 32q^4 - 123q^3 + 14q^2 + 5q \\
\end{array}$\\
\hline
17 & \!\!\!\!\!
$\begin{array}{l}
4q^{13} + 15q^{12} + 83q^{11} - 192q^{10} - 390q^9 + 888q^8 + 106q^7 - 1083q^6 + 538q^5 \\
+ 180q^4 - 167q^3 + 9q^2 + 10q - 1 \\
\end{array}$\\
\hline
18 & \!\!\!\!\!
$\begin{array}{l}
9q^{12} + 31q^{11} + 16q^{10} - 382q^9 + 300q^8 + 655q^7 - 984q^6 + 208q^5 \\
+ 277q^4 - 135q^3 + q^2 + 3q + 1 \\
\end{array}$\\
\hline
19 & \!\!\!\!\!
$\begin{array}{l}
16q^{11} + 40q^{10} - 161q^9 - 126q^8 + 669q^7 - 462q^6 - 253q^5 \\
+ 407q^4 - 122q^3 - 21q^2 + 15q - 2 \\
\end{array}$\\
\hline
20 & 
$
26q^{10} - 4q^9 - 216q^8 + 303q^7 + 126q^6 - 521q^5 + 368q^4 - 71q^3 - 16q^2 + 5q
$ \\
\hline
21 & 
$
31q^9 - 70q^8 - 67q^7 + 325q^6 - 299q^5 + 152q^3 - 90q^2 + 19q - 1
$ \\
\hline
22 & 
$
25q^8 - 89q^7 + 79q^6 + 79q^5 - 199q^4 + 141q^3 - 35q^2 - 3q + 2
$ \\
\hline
23 & 
$
 14q^7 - 61q^6 + 97q^5 - 55q^4 - 20q^3 + 41q^2 - 19q + 3
$ \\
\hline
24 & 
$
 5q^6 - 24q^5 + 46q^4 - 44q^3 + 21q^2 - 4q 
$ \\
\hline
25 &
$
q^5 - 5q^4 + 10q^3 - 10q^2 + 5q - 1
$ \\
\hline
\end{longtable}

\begin{longtable}{|c|l|}
\multicolumn{2}{c}{$n=12$} \\
\hline
$e$ & \multicolumn{1}{c|}{$N_{n,e}(q)$} \\
\hline\endfirsthead
\multicolumn{2}{c}{$n=12$ (continued)} \\
\hline
$e$ & \multicolumn{1}{c|}{$N_{n,e}(q)$} \\
\hline\endhead
\multicolumn{2}{c}{\textit{Continued on the next page}}
\endfoot
\endlastfoot
0 & 
$
q^{11}
$\\
\hline
1 &
$
9q^{11} - 8q^{10} - q^9
$ \\
\hline
2 &
$
7q^{12} + 23q^{11} - 50q^{10} + 12q^9 + 8q^8
$ \\
\hline
3 &
$
5q^{13} + 28q^{12} + 3q^{11} - 113q^{10} + 67q^9 + 31q^8 - 21q^7
$ \\
\hline
4 &
$
3q^{14} + 19q^{13} + 48q^{12}  - 62q^{11} - 188q^{10} + 212q^9 + 39q^8 - 90q^7 + 19q^6
$ \\
\hline
5 &
$
q^{15} + 12q^{14} + 44q^{13} + 57q^{12} - 233q^{11} - 180q^{10} + 491q^9 - 72q^8 - 193q^7 + 73q^6
$ \\
\hline
6 & \!\!\!\!\!
$
\begin{array}{l}
4q^{15} + 35q^{14} + 74q^{13} - 7q^{12} - 463q^{11} + 46q^{10} + 767q^9 - 333q^8 - 326q^7 \\
 + 232q^6 - 23q^5 - 6q^4 \\
\end{array}
$
\\
\hline
7 & \!\!\!\!\!
$\begin{array}{l}
18q^{15} + 49q^{14} + 137q^{13} - 224q^{12} - 648q^{11} + 547q^{10} + 899q^9 - 788q^8 - 343q^7 \\
 + 464q^6 - 112q^5 + q^3 \\
\end{array}$
\\
\hline
8 & \!\!\!\!\!
$\begin{array}{l}
q^{17} + 4q^{16} + 14q^{15} + 120q^{14} + 124q^{13} - 519q^{12} - 716q^{11} + 1280q^{10} + 758q^9 \\ 
 - 1390q^8 - 126q^7 + 663q^6 - 218q^5 + q^4 + 4q^3 \\
\end{array}$ \\
\hline
9 & \!\!\!\!\!
$\begin{array}{l}
11q^{16} + 43q^{15} + 116q^{14} + 120q^{13} - 758q^{12} - 904q^{11} + 2374q^{10} + 259q^9 - 2049q^8 \\
 + 327q^7 + 799q^6 - 365q^5 + 19q^4 + 8q^3 \\
\end{array}$ \\
\hline
10 & \!\!\!\!\!
$\begin{array}{l}
2q^{17} + 16q^{16} + 56q^{15} + 158q^{14} + 43q^{13} - 1045q^{12} - 901q^{11} + 3558q^{10} - 536q^9 \\
 - 2793q^8 + 1065q^7 + 863q^6 - 538q^5 + 40q^4 + 12q^3 \\
\end{array}$ \\
\hline
11 & \!\!\!\!\!
$\begin{array}{l}
4q^{17} + 18q^{16} + 81q^{15} + 158q^{14} + 23q^{13} - 1424q^{12} - 641q^{11} + 4485q^{10} - 1302q^9 \\
 - 3668q^8 + 2167q^7 + 745q^6 - 765q^5 + 95q^4 + 26q^3 - q^2 - q \\
\end{array}$ \\
\hline
12 & \!\!\!\!\!
$\begin{array}{l}
q^{18} + 24q^{16} + 110q^{15} + 183q^{14} - 106q^{13} - 1771q^{12} - 82q^{11} + 5180q^{10} - 2441q^9 \\
 - 3968q^8 + 3046q^7 + 579q^6 - 928q^5 + 156q^4 + 17q^3 \\
\end{array}$\\
\hline
13 & \!\!\!\!\!
$\begin{array}{l}
8q^{17} + 18q^{16} + 94q^{15} + 248q^{14} - 244q^{13} - 1816q^{12} + 170q^{11} + 5344q^{10} - 2648q^9 \\
 - 4700q^8 + 4048q^7 + 313q^6 - 1062q^5 + 181q^4 + 59q^3 - 12q^2 - q \\
\end{array}$\\
\hline
14 & \!\!\!\!\!
$\begin{array}{l}
q^{18} + 34q^{16} + 78q^{15} + 255q^{14} - 221q^{13} - 1960q^{12} + 254q^{11} + 5666q^{10} - 2769q^9 \\ 
 - 5320q^8 + 4530q^7 + 402q^6 - 1140q^5 + 114q^4 + 88q^3 - 12q^2
\end{array}$\\
\hline
15 & \!\!\!\!\!
$\begin{array}{l}
5q^{17} + 16q^{16} + 102q^{15} + 159q^{14} - 34q^{13} - 1971q^{12} + 5874q^{10} - 2611q^9 - 5739q^8 \\
 + 4463q^7 + 1245q^6 - 1965q^5 + 389q^4 + 89q^3 - 19q^2 - 3q \\
\end{array}$\\
\hline
16 & \!\!\!\!\!
$\begin{array}{l}
2q^{17} + 19q^{16} + 68q^{15} + 177q^{14} - 34q^{13} - 1546q^{12} - 781q^{11} + 5859q^{10} - 1497q^9 \\
 - 6700q^8 + 4519q^7 + 1502q^6 - 2007q^5 + 344q^4 + 94q^3 - 17q^2 - 2q \\
\end{array}$\\
\hline
17 & \!\!\!\!\!
$\begin{array}{l}
2q^{17} + 8q^{16} + 50q^{15} + 168q^{14} + 41q^{13} - 1220q^{12} - 1275q^{11} + 5183q^{10} + 93q^9\\
 - 7338q^8 + 3805q^7 + 2323q^6 - 2191q^5 + 205q^4 +  169q^3 - 16q^2 - 8q + 1\\
\end{array}$\\
\hline
18 & \!\!\!\!\!
$\begin{array}{l}
6q^{16} + 26q^{15} + 135q^{14} + 170q^{13} - 981q^{12} - 1538q^{11} + 4160q^{10} + 1687q^9 - 7438q^8 \\
 + 2573q^7 + 3289q^6 - 2439q^5 + 199q^4 + 174q^3 - 20q^2 - 3q
\end{array}$\\
\hline
19 & \!\!\!\!\!
$\begin{array}{l}
2q^{16} + 20q^{15} + 69q^{14} + 171q^{13} - 449q^{12} - 1653q^{11} + 2529q^{10} + 3116q^9 - 6395q^8 \\
 + 648q^7 + 3964q^6 - 2151q^5 - 110q^4 + 260q^3 - 7q^2 - 15q + 1 \\
\end{array}$\\
\hline
20 & \!\!\!\!\!
$\begin{array}{l}
6q^{15} + 47q^{14} + 137q^{13} - 140q^{12} - 1274q^{11} + 720q^{10} + 3947q^9 - 4449q^8 - 1556q^7 \\
 + 4192q^6 - 1494q^5 - 358q^4 + 228q^3 - 3q^2 - 2q - 1\\
\end{array}$ \\
\hline
21 & \!\!\!\!\!
$\begin{array}{l}
q^{15} + 20q^{14} + 90q^{13} + 13q^{12} - 700q^{11} - 421q^{10} + 3192q^9 - 1535q^8 - 3435q^7 \\
 + 3743q^6 - 351q^5 - 993q^4 + 389q^3 + q^2 - 14q\\
\end{array}$ \\
\hline
22 & \!\!\!\!\!
$\begin{array}{l}
4q^{14} + 41q^{13} + 86q^{12} - 292q^{11} - 707q^{10} + 1611q^9 + 826q^8 - 3505q^7 + 1826q^6 \\
 + 988q^5 - 1152q^4 + 232q^3 + 64q^2 - 23q + 1 \\
\end{array}$ \\
\hline
23 & \!\!\!\!\!
$\begin{array}{l}
9q^{13} + 64q^{12} - 17q^{11} - 568q^{10} + 391q^9 + 1481q^8 - 1988q^7 - 260q^6 \\
 + 1665q^5 - 817q^4 - 67q^3 + 126q^2 - 17q - 2 \\
\end{array}$ \\
\hline
24 & \!\!\!\!\!
$\begin{array}{l}
16q^{12} + 74q^{11} - 243q^{10} - 237q^9 + 1061q^8 - 412q^7 - 1127q^6 \\ 
 + 1162q^5 - 111q^4 - 293q^3 + 115q^2 - 2q - 3 \\
\end{array}$ \\
\hline
25 & \!\!\!\!\!
$\begin{array}{l}
26q^{11} + 22q^{10} - 316q^9 + 344q^8 + 403q^7 - 891q^6 + 302q^5 \\
+ 327q^4 - 263q^3 + 33q^2 + 16q - 3 \\
\end{array}$ \\
\hline
26 & 
$
31q^{10} - 56q^9 - 139q^8 + 428q^7 - 260q^6 - 223q^5 + 327q^4 - 90q^3 - 37q^2 + 21q - 2
$ \\
\hline
27 & 
$
25q^9 - 84q^8 + 46q^7 + 150q^6 - 246q^5 + 105q^4 + 34q^3 - 36q^2 + 5q + 1
$ \\
\hline
28 & 
$
14q^8 - 60q^7 + 88q^6 - 30q^5 - 49q^4 + 52q^3 - 14q^2 - 2q + 1
$ \\
\hline
29 &
$
5q^7 - 24q^6 + 45q^5 - 40q^4 + 15q^3 - q
$ \\
\hline
30 &
$
 q^6 - 5q^5 + 10q^4 - 10q^3 + 5q^2 - q
$ \\
\hline
\end{longtable}

\begin{longtable}{|c|l|}
\multicolumn{2}{c}{$n=13$} \\
\hline
$e$ & \multicolumn{1}{c|}{$N_{n,e}(q)$} \\
\hline\endfirsthead
\multicolumn{2}{c}{$n=13$ (continued)} \\
\hline
$e$ & \multicolumn{1}{c|}{$N_{n,e}(q)$} \\
\hline\endhead
\multicolumn{2}{c}{\textit{Continued on the next page}}
\endfoot
\endlastfoot
0 &
$q^{12}$ \\
\hline
1 &
$
10q^{12} - 9q^{11} - q^{10}
$ \\
\hline
2 &
$
8q^{13} + 30q^{12} - 65q^{11} + 18q^{10} + 9q^9
$ \\
\hline
3 &
$
6q^{14} + 39q^{13} + 5q^{12} - 167q^{11} + 114q^{10} + 31q^9 - 28q^8
$ \\
\hline
4 &
$
4q^{15} + 30q^{14} + 67q^{13} - 103q^{12} - 275q^{11} + 358q^{10} + 20q^9 - 135q^8 + 34q^7
$ \\
\hline
5 & \!\!\!\!\!
$\begin{array}{l}
2q^{16} + 20q^{15} + 67q^{14} + 82q^{13} - 394q^{12} - 248q^{11} + 859q^{10} - 229q^9 - 298q^8 \quad \quad \quad \\
+ 148q^7 - 9q^6 \\
\end{array}$ \\
\hline
6 & \!\!\!\!\!
$\begin{array}{l}
12q^{16} + 51q^{15} + 133q^{14} - 65q^{13} - 799q^{12} + 218q^{11} + 1379q^{10} - 847q^9 \\
- 458q^8 + 454q^7 - 69q^6 - 9q^5 
\end{array}$ \\
\hline
7 & \!\!\!\!\!
$\begin{array}{l}
3q^{17} + 28q^{16} + 134q^{15} + 125q^{14} - 411q^{13} - 1151q^{12} + 1320q^{11} + 1524q^{10} \\
- 1892q^9 - 348q^8 + 964q^7 - 311q^6 + 11q^5 + 4q^4
\end{array}$ \\
\hline
8 & \!\!\!\!\!
$\begin{array}{l}
2q^{18} + 6q^{17} + 78q^{16} + 193q^{15} + 77q^{14} - 1069q^{13} - 1025q^{12} + 2846q^{11} \\
+ 948q^{10} - 3225q^9 + 375q^8 + 1398q^7 - 678q^6 + 67q^5 + 7q^4
\end{array}$ \\
\hline
9 & \!\!\!\!\!
$\begin{array}{l}
6q^{18} + 26q^{17} + 95q^{16} + 321q^{15} - 69q^{14} - 2094q^{13} - 207q^{12} + 4740q^{11} - 746q^{10} \\
- 4449q^9 + 1744q^8 + 1604q^7 - 1169q^6 + 190q^5 + 11q^4 - 3q^3
\end{array}$ \\
\hline
10 & \!\!\!\!\!
$\begin{array}{l}
2q^{19} + 5q^{18} + 59q^{17} + 170q^{16} + 345q^{15} - 445q^{14} - 2925q^{13} + 968q^{12} + 7205q^{11} \\
- 4283q^{10} - 4943q^9 + 3818q^8 + 1357q^7 - 1697q^6 + 360q^5 + 4q^4
\end{array}$\\
\hline
11 & \!\!\!\!\!
$\begin{array}{l}
3q^{19} + 20q^{18} + 70q^{17} + 257q^{16} + 361q^{15} - 1001q^{14} - 3656q^{13} + 2824q^{12} \\
+ 9253q^{11} - 8839q^{10} - 4743q^9 + 6750q^8 + 222q^7 - 2146q^6 + 619q^5 \\
+ 22q^4 - 14q^3 - 2q^2
\end{array}$\\
\hline
12 & \!\!\!\!\!
$\begin{array}{l}
2q^{19} + 34q^{18} + 137q^{17} + 277q^{16} + 255q^{15} - 1559q^{14} - 4125q^{13} + 5221q^{12} \\
+ 10401q^{11} - 13938q^{10} - 3533q^9 + 9951q^8 - 1711q^7 - 2312q^6 + 954q^5 \\
- 39q^4 - 14q^3 - q^2
\end{array}$\\
\hline
13 & \!\!\!\!\!
$\begin{array}{l}
2q^{20} + 6q^{19} + 30q^{18} + 179q^{17} + 358q^{16} + 70q^{15} - 1933q^{14} - 4873q^{13} + 8237q^{12} \\
+ 10694q^{11} - 19027q^{10} - 1647q^9 + 13190q^8 - 4302q^7 - 2097q^6 + 1202q^5 - 53q^4 \\
- 38q^3 + q^2 + q
\end{array}$\\
\hline
14 & \!\!\!\!\!
$\begin{array}{l}
2q^{20} + 10q^{19} + 39q^{18} + 169q^{17} + 531q^{16} - 77q^{15} - 2875q^{14} - 4683q^{13} \\
+ 11171q^{12} + 10159q^{11} - 23953q^{10} + 721q^9 + 16398q^8 - 7185q^7 \\
- 1638q^6 + 1262q^5 + 5q^4 - 56q^3
\end{array}$\\
\hline
15 & \!\!\!\!\!
$\begin{array}{l}
2q^{20} + 10q^{19} + 51q^{18} + 210q^{17} + 467q^{16} + 20q^{15} - 3792q^{14} - 4140q^{13} + 13647q^{12} \\
+ 8608q^{11} - 27735q^{10} + 3996q^9 + 18068q^8 - 9282q^7 - 1833q^6 + 1895q^5 - 114q^4 \\
- 82q^3 + 3q^2 + q
\end{array}$\\
\hline
16 & \!\!\!\!\!
$\begin{array}{l}
3q^{20} + 12q^{19} + 51q^{18} + 198q^{17} + 477q^{16} + 24q^{15} - 3872q^{14} - 4833q^{13} + 15815q^{12} \\
+ 8070q^{11} - 30588q^{10} + 5296q^9 + 20815q^8 - 12199q^7 - 869q^6 + 1691q^5 + 68q^4 \\
- 183q^3 + 24q^2
\end{array}$\\
\hline
17 & \!\!\!\!\!
$\begin{array}{l}
17q^{19} + 49q^{18} + 200q^{17} + 426q^{16} + 84q^{15} - 3820q^{14} - 5318q^{13} + 16673q^{12} \\
+ 8957q^{11} - 33680q^{10} + 6526q^9 + 22265q^8 - 12455q^7 - 2340q^6 + 2722q^5 \\
- 185q^4 - 123q^3 - 2q^2 + 4q
\end{array}$\\
\hline
18 & \!\!\!\!\!
$\begin{array}{l}
2q^{20} + 66q^{18} + 191q^{17} + 421q^{16} - 84q^{15} - 3371q^{14} - 4961q^{13} + 14736q^{12} \\
+ 11064q^{11} - 34008q^{10} + 4206q^9 + 25788q^8 - 14341q^7 - 2490q^6 + 3196q^5 \\
- 280q^4 - 137q^3 - 2q^2 + 4q
\end{array}$\\
\hline
19 & \!\!\!\!\!
$\begin{array}{l}
2q^{20} + 8q^{19} + 15q^{18} + 204q^{17} + 358q^{16} + 62q^{15} - 2791q^{14} - 5331q^{13} + 12413q^{12} \\
+ 13799q^{11} - 32500q^{10} - 11q^9 + 27565q^8 - 12789q^7 - 4512q^6 + 4036q^5 - 361q^4 \\
- 177q^3 + 4q^2 + 5q + 1
\end{array}$\\
\hline
20 & \!\!\!\!\!
$\begin{array}{l}
6q^{19} + 30q^{18} + 81q^{17} + 401q^{16} + 210q^{15} - 2219q^{14} - 5386q^{13} + 9186q^{12} \\
+ 16436q^{11} - 28459q^{10} - 7048q^9 + 29874q^8 - 10394q^7 - 6817q^6 + 4536q^5 \\
- 197q^4 - 256q^3 + 10q^2 + 5q + 1
\end{array}$
\\
\hline
21 & \!\!\!\!\!
$\begin{array}{l}
2q^{19} + 20q^{18} + 75q^{17} + 209q^{16} + 515q^{15} - 1553q^{14} - 5346q^{13} + 5582q^{12} \\
+ 18560q^{11} - 23145q^{10} - 13967q^9 + 30496q^8 - 6824q^7 - 9149q^6 + 4856q^5 \\
- 66q^4 - 284q^3 + 18q^2 + q
\end{array}$\\
\hline
22 & \!\!\!\!\!
$\begin{array}{l}
9q^{18} + 52q^{17} + 174q^{16} + 360q^{15} - 576q^{14} - 4821q^{13} + 2049q^{12} + 18239q^{11} \\
- 15426q^{10} - 19574q^9 + 27484q^8 - 725q^7 - 12331q^6 + 5172q^5 + 341q^4 - 455q^3 \\
+ 17q^2 + 12q - 1
\end{array}$\\
\hline
23 & \!\!\!\!\!
$\begin{array}{l}
34q^{17} + 119q^{16} + 282q^{15} - 92q^{14} - 3506q^{13} - 770q^{12} + 14907q^{11} - 5897q^{10} \\
- 22771q^9 + 21208q^8 + 4989q^7 - 12790q^6 + 3895q^5 + 843q^4 - 452q^3 \\
- 13q^2 + 14q
\end{array}$\\
\hline
24 & \!\!\!\!\!
$\begin{array}{l}
2q^{18} + 89q^{16} + 193q^{15} + 140q^{14} - 1976q^{13} - 2568q^{12} + 10081q^{11} + 2234q^{10} \\
- 21383q^9 + 11607q^8 + 10263q^7 - 11227q^6 + 1372q^5 + 1668q^4 - 448q^3 - 70q^2 \\
+ 22q + 1
\end{array}$\\
\hline
25 & \!\!\!\!\!
$\begin{array}{l}
4q^{17} + 5q^{16} + 198q^{15} + 160q^{14} - 912q^{13} - 2613q^{12} + 4698q^{11} + 7305q^{10} \\
- 16170q^9 + 1491q^8 + 13568q^7 - 8488q^6 - 715q^5 + 1784q^4 - 239q^3 \\
- 88q^2 + 12q
\end{array}$\\
\hline
26 & \!\!\!\!\!
$\begin{array}{l}
12q^{16} + 25q^{15} + 273q^{14} - 280q^{13} - 1915q^{12} + 839q^{11} + 7817q^{10} - 8424q^9 \\
- 5856q^8 + 12745q^7 - 4097q^6 - 2884q^5 + 1946q^4 - 85q^3 - 127q^2 + 8q + 3
\end{array}$\\
\hline
27 & \!\!\!\!\!
$\begin{array}{l}
22q^{15} + 60q^{14} + 225q^{13} - 1038q^{12} - 1202q^{11} + 5432q^{10} - 1029q^9 - 9263q^8 \\
+ 8865q^7 + 326q^6 - 3954q^5 + 1641q^4 - 4q^3 - 83q^2 + q + 1
\end{array}$\\
\hline
28 & \!\!\!\!\!
$\begin{array}{l}
38q^{14} + 89q^{13} - 68q^{12} - 1450q^{11} + 1845q^{10} + 3318q^9 - 7623q^8 + 2785q^7 \\
+ 4014q^6 - 3964q^5 + 806q^4 + 361q^3 - 165q^2 + 13q + 1
\end{array}$\\
\hline
29 & \!\!\!\!\!
$\begin{array}{l}
54q^{13} + 89q^{12} - 561q^{11} - 455q^{10} + 3213q^9 - 2553q^8 - 2653q^7 + 5165q^6 \\
- 2505q^5 - 167q^4 + 490q^3 - 120q^2 + 2q + 1
\end{array}$\\
\hline
30 & \!\!\!\!\!
$\begin{array}{l}
74q^{12} - 25q^{11} - 736q^{10} + 1122q^9 + 1014q^8 - 3589q^7 + 2611q^6 + 327q^5 \\
- 1341q^4 + 622q^3 - 59q^2 - 25q + 5
\end{array}$\\
\hline
31 & \!\!\!\!\!
$\begin{array}{l}
82q^{11} - 189q^{10} - 330q^9 + 1431q^8 - 1265q^7 - 675q^6 + 1961q^5 - 1360q^4 + 339q^3 \\
+ 30q^2 - 27q + 3
\end{array}
$
\\
\hline
32 & \!\!\!\!\!
$\begin{array}{l}
70q^{10} - 265q^9 + 179q^8 + 603q^7 - 1292q^6 + 878q^5 + 46q^4 - 393q^3 \\
+ 218q^2 - 47q + 3
\end{array}$\\
\hline
33 &
$
44q^9 - 211q^8 + 350q^7 - 114q^6 - 372q^5 + 546q^4 - 314q^3 + 70q^2 + 4q - 3
$\\
\hline
34 & 
$
20q^8 - 109q^7 + 236q^6 - 241q^5 + 80q^4 + 65q^3 - 76q^2 + 29q - 4
$
\\
\hline
35 & 
$
6q^7 - 35q^6 + 85q^5 - 110q^4 + 80q^3 - 31q^2 + 5q
$ \\
\hline
36 & 
$
q^6 - 6q^5 + 15q^4 - 20q^3 + 15q^2 - 6q + 1
$ \\
\hline
\end{longtable}

\end{appendix}

%\bibliographystyle{elsarticle-num}
%\bibliography{parabolic,porc,fingroups}

\begin{thebibliography}{10}
\expandafter\ifx\csname url\endcsname\relax
  \def\url#1{\texttt{#1}}\fi
\expandafter\ifx\csname urlprefix\endcsname\relax\def\urlprefix{URL }\fi
\expandafter\ifx\csname href\endcsname\relax
  \def\href#1#2{#2} \def\path#1{#1}\fi

\bibitem{MAGMA}
W.~Bosma, J.~Cannon, C.~Playoust, The {M}agma algebra system. {I}. {T}he user
  language., J.\ Symbolic Comput. 24~(3-4) (1997) 98--128.

\bibitem{Boyarchenko2006}
M.~Boyarchenko, Base change maps for unipotent algebra groups, preprint, arXiv:
  math/0601133 (2006).

\bibitem{CRI}
C.~Curtis, I.~Reiner, Methods of representation theory: with applications to
  finite groups and orders, Vol.~1, John Wiley {\&} Sons, Inc., 1981.

\bibitem{Halasi2006}
Z.~Halasi, On the characters of the unit group of {DN}-algebras, J.\ Algebra
  302 (2006) 678--685.

\bibitem{Higman1}
G.~Higman, Enumerating $p$-groups. {I}: {I}nequalities, Proc. London Math. Soc.
  (3) 10 (1960) 24--30.

\bibitem{IsaacsBook}
I.M.~Isaacs, Character theory of finite groups, Dover Publications, New York,
  1994.

\bibitem{Isaacs1995}
I.M.~Isaacs, Characters of groups associated with finite algebras, J.\ Algebra
  177 (1995) 708--730.


\bibitem{Isaacs2007}
I.M.~Isaacs, Counting characters of upper triangular groups, J.\ Algebra 315
  (2007) 698--719.

\bibitem{IK2005}
I.M.~Isaacs, D.~Karagueuzian, Involutions and characters of upper triangular
  matrix groups, Math.\ Comp. 74~(252) (2005) 2027--2033.

\bibitem{Marberg2008}
E.~Marberg, Constructing modules of algebra groups, honors Thesis, Stanford
  University, 2008.

\bibitem{Slattery1986}
M.~Slattery, Computing character degrees of $p$-groups, J.\ Symbolic Comput.
  2~(1) (1986) 51--58.

\bibitem{pol2003}
A.~Vera-L\'opez, J.~Arregi, Conjugacy classes in unitriangular matrices, Linear
  Algebra Appl. 370 (2003) 85--124.
\end{thebibliography}

\end{document}